\documentclass[11pt]{article}
\usepackage{geometry}                
\geometry{letterpaper} 
\usepackage{graphicx}
\usepackage{amssymb}
\usepackage{color}
\usepackage[all,cmtip]{xy}
\usepackage{tikz}
\usepackage{amsthm}
\usepackage{amsmath}
\usepackage{cleveref}
\usepackage{enumerate} 
\usepackage{multirow}
\usepackage{multicol}
\usepackage{amsfonts}
\usepackage{stmaryrd}
\usepackage{MnSymbol}
\DeclareGraphicsRule{.tif}{png}{.png}{`convert #1 `dirname #1`/`basename #1 .tif`.png}

\newtheorem{theorem}{Theorem}

\newtheorem{thm}{Theorem}[section]
\newtheorem{prop}[thm]{Proposition}
\newtheorem{lem}[thm]{Lemma}
\newtheorem{cor}[thm]{Corollary}

\theoremstyle{definition}
\newtheorem{defn}[thm]{Definition}
\newtheorem{rem}[thm]{Remark}
\newtheorem{example}[thm]{Example}

\DeclareMathOperator{\Homeo}{Homeo_+}

\DeclareMathOperator{\S1}{\mathbb{S}^1}
\DeclareMathOperator{\SL}{SL}

\DeclareMathOperator{\Z}{\mathbb{Z}}
\DeclareMathOperator{\D}{\mathbb{D}}
\DeclareMathOperator{\E}{E}

\theoremstyle{plain}

\newcommand\blfootnote[1]{
 \begingroup
    \renewcommand\thefootnote{}\footnote{#1}
    \addtocounter{footnote}{-1}
\endgroup}

\title{Torsion at the Threshold for Mapping Class Groups}
\date{}
\author{Solomon Jekel and Rita Jim\'enez Rolland}

\begin{document}

\maketitle

\begin{abstract}
The  mapping class group ${\Gamma}_g^ 1$ of a closed orientable surface of genus $g \geq 1$  with one marked point  can be identified, by the Nielsen action, with a subgroup of the group  of orientation preserving homeomorphims of the circle. This inclusion pulls back the powers of the  discrete universal Euler class 
producing classes $\E^n \in H^{2n}({\Gamma}_g^ 1;\mathbb{Z})$ for all  $n\geq 1$.  In this paper we study the power $n=g,$ and prove:  $\E^g$  is a torsion class which generates a cyclic subgroup  of $H^{2g}({\Gamma_g^ 1; \mathbb Z})$ whose order is a positive integer multiple of  $4g(2g+1)(2g-1).$\blfootnote{\textit{Key words.} Mapping class groups, integral cohomology, Euler class, torsion.}
\blfootnote{\textit{2020 Mathematics Subject Classification.} 57K20, 20J05, 55R40.}
\end{abstract}


\section{Introduction}

The  mapping class group ${\Gamma}_g^ 1$ of a closed orientable surface $\Sigma_g$ of genus $g \geq 1$  with one marked point  can be identified, by the Nielsen action, with a subgroup of the group  $\Homeo\S1$ of orientation preserving homeomorphims of the circle.
The inclusion $\rho:\Gamma_g^1\hookrightarrow \Homeo\S1$ pulls back the powers of the {\it discrete universal Euler class} 
producing classes $\E^n \in H^{2n}({\Gamma}_g^ 1;\mathbb{Z})$ for all  $n\geq 1$.  At the ``threshold" $n=g$ we prove the following. 
\begin{theorem}\label{MAIN} 
The  cohomology class $\E^g$  generates a finite cyclic subgroup  of $H^{2g}({\Gamma_g^ 1; \mathbb Z})$ whose order is a positive integer multiple of $4g(2g+1)(2g-1).$
\end{theorem}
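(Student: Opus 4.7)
I would split the theorem into two independent claims: that $\E^g$ is a torsion class, and that its order is divisible by $4g(2g+1)(2g-1)$.

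For the torsion assertion, my plan is to identify $\E$, modulo torsion, with $-\psi$, where $\psi\in H^2(\Gamma_g^1;\mathbb Q)$ denotes the first Chern class of the cotangent line at the marked point. This identification is geometrically natural: in a hyperbolic uniformization of $(\Sigma_g,p)$, the circle at infinity $\partial\tilde\Sigma_g$ on which $\Gamma_g^1$ acts through the Nielsen representation is equivariantly conjugate, via the boundary extension of an isometric realization, to the unit tangent circle at $p$. Granting this, $\E^g\equiv(-1)^g\psi^g$ modulo torsion, and the required rational vanishing $\psi^g=0$ in $H^{2g}(\Gamma_g^1;\mathbb Q)$ reduces to a Looijenga--Ionel-type vanishing of the tautological ring of $\mathcal{M}_g^1$ in degrees above $g-1$.

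For the divisibility I would restrict $\E^g$ to three cyclic subgroups $C_n\le\Gamma_g^1$ of orders $n\in\{4g,\,2g+1,\,2g-1\}$. Each such order is realized by a classical conformal automorphism of a genus-$g$ Riemann surface fixing a point (Wiman, Harvey, Kulkarni), which gives an embedding $C_n\hookrightarrow\Gamma_g^1$. Because every finite-order element of $\Homeo\S1$ is topologically conjugate to a rotation, the Nielsen action of a generator of $C_n$ is conjugate to a rotation by $2\pi k_n/n$; under the canonical isomorphism $H^{2j}(C_n;\mathbb Z)\cong\mathbb Z/n$, the Euler class of such a rotation is the element $k_n\pmod n$, so $\E^g|_{C_n}=k_n^g\pmod n$. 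A local analysis at the fixed point identifies $k_n$ with the rotation number of the automorphism on a tangent direction, and the classical examples can be chosen so that $\gcd(k_n,n)=1$; hence $\E^g|_{C_n}$ generates $\mathbb Z/n$. Consequently $n\mid\operatorname{ord}(\E^g)$ for each of the three orders, and the pairwise coprimality of $4g$, $2g+1$, $2g-1$ yields $4g(2g+1)(2g-1)\mid\operatorname{ord}(\E^g)$.

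The principal obstacle I anticipate is the rotation-number calculation underlying the divisibility argument: for each of the three cyclic groups one must exhibit an explicit cyclic automorphism of the required order and verify that its Nielsen rotation number has numerator coprime to $n$. This involves interpreting the Nielsen boundary action via a hyperbolic uniformization, identifying the induced boundary rotation with the local tangent rotation at a fixed point of the isometric realization, and selecting primitive classical examples that fix the marked point. A secondary but essential technicality is the identification $\E\equiv-\psi$ modulo torsion used in the torsion step; this amounts to comparing two flat $\S1$-bundles over $B\Gamma_g^1$ (the Nielsen bundle and the tangent circle bundle at the marked point) at the level of holonomy, which should be routine but deserves careful bookkeeping.
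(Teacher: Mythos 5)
Your torsion step and your treatment of the factors $4g$ and $2g+1$ are sound: the identification of $\E$ with $\mp\psi$ modulo torsion plus the Ionel--Looijenga vanishing is exactly the ``indirect'' route the paper acknowledges in its introduction (the paper instead gives a self-contained argument via an obstruction -- the transition cycle -- to lifting an orbit cycle through the double complex of the inversive action), and restriction to the cyclic subgroups generated by $S$ (order $4g$) and $T$ (order $2g+1$), both of which fix the marked point, does yield $4g(2g+1)\mid\operatorname{ord}(\E^g)$, as in \cite{JJR20}.

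The genuine gap is the factor $2g-1$. Your plan requires a cyclic subgroup $C_{2g-1}\le\Gamma_g^1$, but for general $g$ the mapping class group of a genus $g$ surface contains \emph{no} element of order $2g-1$. For instance, for $g=3$ a $\mathbb{Z}/5$ action on a genus $3$ surface would have to satisfy $4=5(2h-2)+4k$ by Riemann--Hurwitz (every branch point of a $\mathbb{Z}/5$ cover has full ramification), forcing $h=1$, $k=1$; but a single branch point over a genus one quotient is impossible for an abelian cover, since the branch element lies in the commutator subgroup of the orbifold group and hence dies in $\mathbb{Z}/5$. The same computation rules out order $2g-1=7$ in genus $4$, order $9$ in genus $5$ (Harvey's lcm condition fails), etc. This is precisely why the paper stresses that the $2g-1$ torsion ``is not constructed using periodic elements in the mapping class group'': it is extracted instead from the infinite-order mapping class $w$ satisfying $w^{2g-1}=d_{2g}^{2}$ (Proposition \ref{w}), which rewrites the transition cycle as $(2g)(2g+1)(2g-1)$ times a single isotropy morphism and feeds that $\lambda$ into the evaluation argument of Proposition \ref{ExtRep3}. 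Without some substitute for the nonexistent $C_{2g-1}$, your argument proves only divisibility by $4g(2g+1)$.
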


All powers of the Euler class $\E$ are non-trivial,  effectively because ${\Gamma}_g^ 1$ has non-trivial finite cyclic subgroups; see for instance \cite{JJR20}. So a basic component of the proof of Theorem A  is distinguishing finite cyclic from infinite cyclic, or in the language of the Universal Coefficient Theorem, determining whether $\E^g$ lies in the $Ext$-term or in the $Hom$-term.
Associated to an action of the group ${\Gamma}_g^ 1$ there is a bi-simplicial set, 
a double chain complex and a total chain complex which compute the homology and cohomology of the group. 
In the orbit complex of the bi-simplicial set we construct a $2g$-chain ``dual'' to a representative of the class $\E^g$, and we attempt to lift it to a 
$2g$-cycle on the total complex. If it lifts the  class $\E^g$ is in the $Hom$-term.  We show however that there is an obstruction to the lifting,  hence the class $\E^g$ is in the $Ext$-term.

A special feature of the lifting process is that it is technically difficult to construct and analyze the obstruction in the double chain complex determined by the Nielsen action, but manageable for a modified action which we call the ``inversive action". 

For the inversive action we consider ${\Gamma}_g^ 1$ as acting on  homotopy classes of  un-oriented based loops on a closed surface of genus $g$ by identifying an element of the fundamental group of the surface with its inverse. Then, even though ${\Gamma}_g^ 1$ no longer acts directly on the fundamental group of $\Sigma_g$, the formal bi-simplicial constructions still determine the homology and cohomology of $\Gamma_g^1$ and there is a cohomology class which pulls back to the class $\E$. It is in this context that we  study the behavior of the class $\E^g.$

The paper is organized as follows. Section \ref{Pre} deals with preliminaries on bi-simplicial sets and homology. In Section \ref{MCG} we discuss the Nielsen action and the inversive action of the mapping
class group $\Gamma_g^1$, and Section \ref{Euler} presents the discrete Euler class in the context of the constructions in the previous two sections. 
Lifting using the inversive action is carried out in Section \ref{SecTranCycle}. It follows closely the analysis in \cite{Je12} using the so called ``projective action". It is assumed there that the action can be defined on the circle, but the inversive action, which is the correct formulation, cannot be constructed in that way.
An advantage of the inversive action is that the obstruction to the lifting, which we call the {\it transition cycle}, lies within a single summand of the total complex associated to the action where a  combinatorial analysis leads to a formula for the cycle. 
In Section \ref{Torsion} we show that $\E^g$ is a non-trivial torsion class in $H^{2g}(\Gamma_g^1;\mathbb{Z})$; see Theorem \ref{Theo:TorsionClass}  and Remark \ref{NONTRIV}. This  can be deduced indirectly from the fact that all the powers $\E^n$ of the Euler class are non-trivial in integer cohomology, \cite{JJR20}, but are known to be trivial in rational cohomology when $n\geq g$ by algebro-geometric techniques \cite{Io02, Lo95}. Our intrinsic proof (Sections \ref{DETECT} and \ref{MAINProof}) enables us to detect torsion of order $2g-1.$ As opposed to torsion of order $4g$ and $2g+1$, it is not  constructed using periodic elements in the  mapping class group.  

\section{Preliminaries}\label{Pre}

\subsection{Homeomorphisms of the circle and the universal Euler class}\label{EULER}

Consider the group  $\Homeo\S1$ of orientation preserving homeomorphisms of the circle $\S1$ with the discrete topology.  Let $\Homeo(\S1)_\tau$  denote the same group with the compact-open topology. 
The starting point for our analysis of the discrete Euler class is the theorem of  Mather  and Thurston \cite{Ma71, Th74}, statement c) below. We state some fundamental results.
\begin{itemize}
\vskip .025in
\item [a)] $\Homeo(\S1)_\tau$ is homotopy equivalent to $\S1$. 
\item[b)] The classifying space $B\Homeo(\S1)_\tau$
is a $K(\Z, 2)$, and its  cohomology  is a polynomial algebra over $\mathbb{Z}$ on a generator $\E_\tau$ of degree $2$, called the {\it universal Euler class}.
\item[c)] (Mather--Thurston) The identity $id: \Homeo\S1 \to \Homeo(\S1)_\tau$
induces an algebra isomorphism $id^*:  H^*({B}\Homeo(\S1)_\tau;\Z) \to H^*({B}\Homeo\S1;\Z).$
\item[d)] Any inclusion  $\iota: \mathbb{Z}_m \hookrightarrow \Homeo\S1$ induces an epimorphism of polynomial algebras
$\iota^*:H^*({B}\Homeo\S1; \mathbb{Z}) \to H^*({B}\mathbb{Z}_m; \mathbb{Z}).$ 
 \end{itemize}

A discussion of these results is included in \cite{JJR20} and references therein. See \cite{Th74} for a general version of c) and Section 4.1 of this paper for a specific proof in the case of homeomorphisms of the circle.  In what follows we will not distinguish between the homology of a group and the homology of its classifying space. 

\begin{defn} 
The {\it  universal discrete Euler class $\mathbf{E}$} in $H^2 (\Homeo\S1;\mathbb{Z})\cong\Z$ is the pullback by $id: \Homeo\S1 \to \Homeo(\S1)_\tau$  of the universal Euler class $\E_\tau$.
\end{defn}
Notice that the  $n$-th power of the universal discrete Euler  class $\mathbf{E}^n$  is a generator of   $H^{2n} (\Homeo\S1;\mathbb{Z})\cong\Z$ for $n\geq 1$, hence a  non-trivial torsion-free cohomology class.

\subsection{Homology of a group derived from an action}\label{GROUPCOHO}

Suppose a group $G$ acts on a set $S.$ Then it acts on $S^{\infty}$ 
which is the infinite simplex on the set.
For each $p \geq 0$ the action on the $p$-simplices $S^{\infty}_p$ gives rise to a groupoid $\Lambda_p{G}$ whose objects in dimension $p$ are 
 the $p$-simplices, whose morphisms 
are $(g,\sigma)\in G \times S^{\infty}_p ,$ and whose source and target maps are 
$s(g,\sigma) =  \sigma$ and 
$t(g,\sigma) = g(\sigma).$ The composition of morphisms $(g, \sigma)$ and $(f,g(\sigma))$ 
is $(fg, \sigma)$.  
We will denote a morphism $f$ with source 
$\sigma$ by $\sigma \cdot f,$ a composable pair by $\sigma \cdot( f, g)$ where $g$ acts on $\sigma$ and $f$ on $g(\sigma)$, and so on.

The action of $G$ gives $\Lambda_pG$ the structure of a simplicial groupoid and extending by nerves in the $q$ direction produces a bi-simplicial set $\Lambda G = \Lambda_{p,q} G.$  The horizontal simplicial set for fixed $q$ is the simplical set associated to the simplicial complex $G^q \times S^{\infty}.$ The simplicial complex $S^{\infty}$ is contractible which implies that the realization of $\Lambda G$ is homotopically equivalent to $BG.$

The bi-simplical set $\Lambda G = \Lambda_{p,q} G$ gives rise to a double chain complex ${\mathcal{C}}={\mathcal{C}}_{p,q}$ which computes the homology of $G.$  Each horizontal simplicial set of $\Lambda G$ is associated to a simplicial complex so oriented chains can be used to form a double chain group.  For each fixed $q$ let the chain complex ${\mathcal{C}}_{*,q}$ be the classical oriented chain complex of the simplicial complex $G^q \times S^{\infty}.$  Then the free abelian group $\mathcal{C}_{p,q}$ is generated by chains of the form 
$[v_0,\ldots,v_p]\cdot(f_1,\ldots,f_q)$  where $[v_0,\ldots,v_p]$ denotes an oriented $p$-simplex of $S^\infty$ and $f_1,\ldots,f_q\in G$.

This double chain complex determines a total complex $T{\mathcal{C}}$ given by $T{\mathcal{C}}_n = \bigoplus_{p+q=n}{\mathcal{C}}_{p,q}$ 
with differential $\partial = \partial^h +\partial^v$,  where the horizontal  and vertical boundary homomorphisms are given on summands 
by $\partial^h: {\mathcal{C}}_{p, q} \to {\mathcal{C}}_{p-1, q},$ $\partial^v: {\mathcal{C}}_{p, q} \to {\mathcal{C}}_{p, q-1}$  and satisfy
$\partial^h \partial^v + \partial^v \partial^h =0.$ Then the homology of the total complex $H_{*}(T{\mathcal{C}})$ is $H_{*}(G).$

We define the {\it orbit chain complex  $H_{0}^v{\mathcal{C}}$ of the action of $G$}   by taking  $p \mapsto H_{0}^v({\mathcal{C}}_{p,*}).$ 
It is the chain complex at $q=0$ in the ${\mathcal{E}}^1$-term of the spectral sequence obtained by computing the vertical homology of 
${\mathcal{C}}_{p,q}.$ Explicitly $H_{0}^v({\mathcal{C}}_{p,*})$ is the 
free abelian group on the orbits of oriented $p$-simplices under the action of $G.$ 

There is a chain map ${\rm q}:T{\mathcal{C}} \to H_{0}^v{\mathcal{C}}$ defined as follows.  
Given  $c = c_0 + c_1 + \cdots +c_p \in T{\mathcal{C}}_p,$ with $c_i \in {\mathcal{C}}_{p-i,i},$
let $${\rm q}(c) = [c_0]^v\in H_{0}^v({\mathcal{C}}_{p,*}),$$ where $[c_0]^v$ denotes the vertical homology class of the chain $c_0 \in {\mathcal{C}}_{p,0}.$ We  we call ${\rm q}$ {\it the orbit chain map of the action of $G$.}

The homology of a group $G$  computed using the total complex $T{\mathcal{C}}$ is related to its cohomology by
the Universal Coefficient Theorem:
 \begin{equation}\label{UCT} 0 \to Ext(H_{p-1}(T{\mathcal{C}}), \mathbb{Z}) {\buildrel {\alpha} \over {\longrightarrow}} H^{p}(T{\mathcal{C}}; \mathbb{Z}) {\buildrel {\beta} \over {\longrightarrow}} Hom(H_{p}(T{\mathcal{C}}), \Bbb Z) \to 0.  \end{equation} 
A non-trivial class in $H^{p}(T{\mathcal{C}}; \Bbb Z)$ lies either in $Ext(H_{p-1}(T{\mathcal{C}}), \Bbb Z)$ or  in $Hom(H_{p}(T{\mathcal{C}}), \Bbb Z).$ If the class is infinite cyclic it can be found in the $Hom$-term; if it is finite cyclic it can be found in $Ext$-term.

\section{The mapping class group and its actions}\label{MCG}

 Consider a closed oriented surface $\Sigma_g$ of genus $g \geq 1,$ and let $z \in \Sigma_g.$ The {\it  mapping class group 
$\Gamma_g^1$} is the group of orientation preserving homeomorphisms of $\Sigma_g$ which fix $z,$ modulo isotopies which fix $z.$
For an orientation preserving homeomorphism $f$ of $\Sigma_g$ such that $f(z)=z$,  
let $ f_*$ denote the induced automorphism of $\pi_1(\Sigma_g, z).$  The assignment $[f] \mapsto f_*$ gives a well-defined monomorphism from $\Gamma_g^1$ to  the automorphism group $Aut(\pi_1(\Sigma_g, z))$, by the Dehn–Nielsen–Baer theorem.

\begin{subsection}{The Nielsen action of $\Gamma_g^1$}\label{Nielsen}

Consider $\pi_1(\Sigma_g, z)$ presented as the free group on $2g$ elements ${{\rm a}}_0, {{\rm a}}_1, {{\rm a}}_3 ,{{\rm a}}_4,\ldots,{{\rm a}}_{2g-1}$
with the relation 
\begin{equation}\label{fundamentalRelation}
 {{{\rm a}}}_0 \cdots {{{\rm a}}}_{2g-1} = {{{\rm a}}}_{2g-1} \cdots {{{\rm a}}}_{0}.
 \end{equation}

Let us place a $4g$-gon in the plane, with directed edges labeled counterclockwise by 
${{\rm a}}_0, {{\rm a}}_1,...,{{\rm a}}_{2g-1}, {{\rm a}}_0^{-1}, {{\rm a}}_2^{-1},...,{{\rm a}}_{2g-1}^{-1}$
in such a way that the point at the beginning of the edge ${{\rm a}}_i$ is at the origin and the polygon
generates a tiling of the hyperbolic disk $\mathbb{D}$ for $g>1,$ and of the Euclidean plane when  $g=1$.
The resulting tiling will have $4g$ polygons forming a wreath at the origin. There will be $4g$ rays 
emanating from the origin, $2g$ labeled by the ${{\rm a}}_i$'s
and $2g$ labeled by their inverses which are antipodal to the ${{\rm a}}_i$'s. Figure \ref{fig:genus2} illustrates the case $g=2.$

The {\it Nielsen action of $\Gamma_g^1$ on $\mathbb{S}^1$} is given by a monomorphism $\rho: \Gamma_g^1\hookrightarrow \Homeo\S1$ that we recall next, (see for example \cite{FM12} for more details). For $g\geq 2$, construct the universal cover $\D \to \Sigma_g$ with $\D$ tiled as described above, and so that the origin in $\D$
is mapped to the marked point  $z\in\Sigma_g$. Each $\gamma\in\pi_1(\Sigma_g,z)$ acts on $\D$ as a hyperbolic isometry with a unique translation axis with forward endpoint $\gamma_{\infty}\in\partial \D \approx \S1$. Since the action of $\pi_1(\Sigma_g,z)$ on $\D$ is cocompact, the set $\Gamma_{\infty}=\{\gamma_{\infty}:\gamma\in\pi_1(\Sigma_g,z)\}$ is dense in $\partial \D$. Hence,  any automorphism $\phi$ of $\pi_1(\Sigma_g, z)$ induces a homeomorphism $\partial \phi$ of $\partial \D\approx \S1$ that takes $\gamma_{\infty}\in\Gamma_{\infty}$ to $\phi(\gamma)_{\infty}$. Pre-composing with the action of ${\Gamma}_g^1$ on $\pi_1(\Sigma_g, z)$ gives the homomorphism $\rho: \Gamma_g^1\rightarrow \Homeo\S1$ which turns out to be injective. 

For $g=1$, the group $\Gamma_1^1\cong \SL(2,\mathbb{Z})$ acts faithfully on rays starting at the origin in the Euclidean plane and $\rho$ is the corresponding monomorphism.

Notice that for each generator $\rm a_i$ of $\pi_1(\Sigma_g,z)$, the corresponding translation axis is a geodesic line through the origin of $\D$ that connects two antipodal points of $\partial \D\approx \mathbb{S}^1$. We label the forward endpoint of the traslation axis by $\rm a_i$  and the backward endpoint  by $\rm a_i^{-1}$.  These points are ordered clockwise in $\Bbb S^1$ by
\begin{equation}\label{order1}
{{\rm a}}_0 < {{\rm a}}_{1}^{-1} < {{\rm a}}_{2}  < \cdots < {{\rm a}}_{2g-1}^{-1} < {{\rm a}}_{0}^{-1} < {{\rm a}}_1 <  {{\rm a}}_{2}^{-1} <\cdots < {{\rm a}}_{2g-2}^{-1} < {{\rm a}}_{2g-1}
\end{equation}
Figure \ref{fig:genus2} illustrates this ordering for the case $g=2$.

\begin{figure}[htbp]
 \begin{centering}     \hskip .5in
 \includegraphics[width= 3.5in]{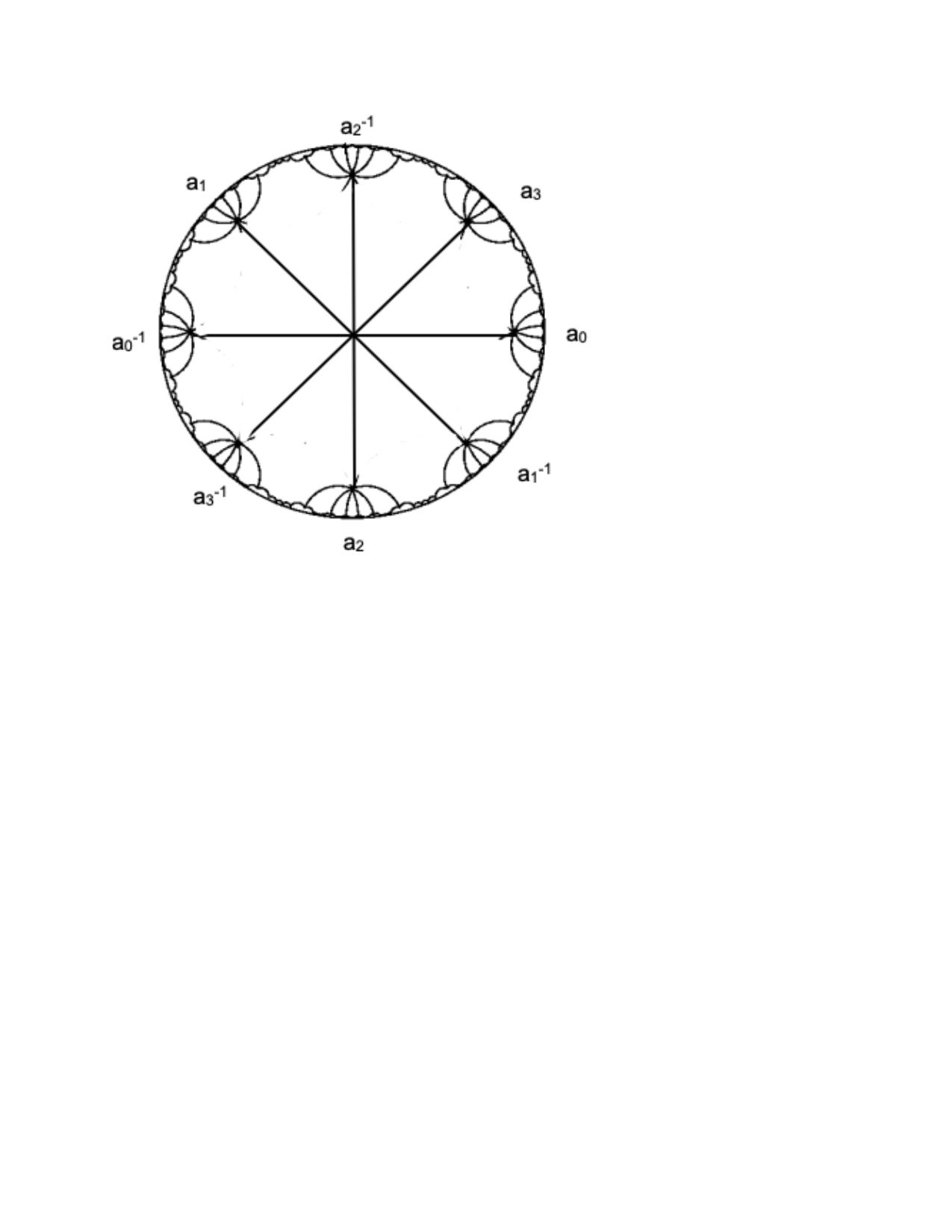} 
 \caption{\small Tiling of the hyperbolic disk $\mathbb{D}$ for genus $g=2$.} \label{fig:genus2}
\end{centering}
\end{figure}

\end{subsection}

\begin{subsection}{ The inversive action of $\Gamma_g^1$}
Let $w$ be an element of $\pi_1(\Sigma_g, z).$ The equivalence relation determined by the relation $w \sim w^{\pm 1}$
divides $\pi_1(\Sigma_g, z)$ into equivalence classes which we refer to as {\it inversives}. The 
action of $\Gamma_g^1$ on $\pi_1(\Sigma_g, z)$ preserves these equivalence classes and induces an action  on the set $\Bbb I$ of inversives which we call the {\it inversive action of $\Gamma_g^1$}.  We denote the inversive of ${\rm a}_i$ by $A_i.$ 

The Nielsen and  inversive actions determine bi-simplicial sets, double chain complexes, and a total complex.
We assume in the remainder of the paper that $\Gamma_g^1$ acts on the orbit of a point.  We choose the point to be ${\rm a}_0$ for the Nielsen action and $A_0$ for the inversive action.  We will, when appropriate, denote constructions for the actions by subscripts $\Bbb N$ and $\Bbb I.$  There is a bi-simplicial map of bi-simplicial sets $\nu: \Lambda_{\Bbb N} \Gamma^g_1 \to   \Lambda_{\Bbb I} \Gamma^g_1,$
induced by the quotient map $\pi_1(\Sigma_g,z)\twoheadrightarrow \mathbb{I}$. 
\begin{prop}\label{EQUIV} The map $\nu$ induces a chain map homology isomorphism
from $T{\mathcal{C}}_{\Bbb N}$ to $T{\mathcal{C}}_{\Bbb I}.$ 
\end{prop}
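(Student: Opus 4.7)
The plan is to compare the two spectral sequences arising from filtering the double complexes $\mathcal{C}_{\mathbb{N}}$ and $\mathcal{C}_{\mathbb{I}}$ by horizontal degree. For $G := \Gamma_g^1$ acting on a non-empty orbit $S$, the abelian group $\mathcal{C}_{p,q}$ is free on generators $[v_0,\ldots,v_p]\cdot(f_1,\ldots,f_q)$ with the $v_i\in S$ distinct, and $\partial^h$ acts only on the simplex factor. Since the infinite simplex on a non-empty set is contractible, one obtains
\[
E^1_{p,q}\;=\;H^h_p(\mathcal{C}_{*,q})\;=\;\begin{cases} \mathbb{Z}[G^q], & p=0,\\ 0, & p>0.\end{cases}
\]
A direct unpacking of the vertical face maps on $E^1_{0,*}$ shows that the induced differential coincides with the standard inhomogeneous bar differential of $G$ with trivial coefficients; the only non-obvious point is that the outer face $d^v_0$, which applies a group element to the chosen vertex, becomes trivial after passing to $H^h_0$, since every vertex of $S^\infty$ represents the same generator of $\mathbb{Z}$. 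Hence the spectral sequence collapses at $E^2$ and converges to $H_*(G)$ for either action, recovering at the chain level the equivalence $|\Lambda G|\simeq BG$ from Section \ref{GROUPCOHO}.

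Applying this observation to both actions simultaneously, I would check that $\nu$ is indeed a bi-simplicial chain map. The quotient $\pi_1(\Sigma_g,z)\twoheadrightarrow \mathbb{I}$ is $\Gamma_g^1$-equivariant by construction and at most two-to-one on each orbit, so $\nu$ sends a generator $[v_0,\ldots,v_p]\cdot(f_1,\ldots,f_q)$ of $\mathcal{C}_{\mathbb{N}}$ either to the corresponding generator of $\mathcal{C}_{\mathbb{I}}$ (when the images $\nu(v_i)$ remain distinct) or to a degenerate simplex, which vanishes in oriented chains. On the $E^1$-page in degree $p=0$ the map carries $[v]\cdot(f_1,\ldots,f_q) \mapsto [\nu(v)]\cdot(f_1,\ldots,f_q)$, and under the identification above both represent the same generator $(f_1,\ldots,f_q)\in \mathbb{Z}[G^q]$. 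Therefore $\nu$ induces the identity on $E^2 = E^\infty$, and the comparison theorem for convergent first-quadrant spectral sequences delivers the claimed homology isomorphism $H_*(T\mathcal{C}_{\mathbb{N}})\cong H_*(T\mathcal{C}_{\mathbb{I}})$.

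The main technical obstacle I anticipate is the bookkeeping required to identify the induced differential $d^1$ on $E^1_{0,*}$ with the standard bar differential — in particular, correctly matching signs and verifying that the twisting coming from the $G$-action on vertices disappears upon taking $H^h_0$. Once this identification is in place the rest is a routine comparison argument, using that both spectral sequences are first-quadrant and converge to $H_*(G)$.
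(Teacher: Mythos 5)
Your argument is correct, but it takes a genuinely different route from the paper. The paper's proof is a one-line topological argument: each bi-simplicial set realizes (up to homotopy) as the product of a $K(\Gamma_g^1,1)$ with the contractible realization of the infinite simplex, and $\nu$ is the identity on the first factor times a homotopy equivalence on the second, hence a homology equivalence of total complexes. You instead work entirely at the chain level: you run the spectral sequence that takes horizontal homology first, use contractibility of $S^\infty$ to collapse $E^1$ onto the column $p=0$, observe that $\nu$ induces the identity $\mathbb{Z}[G^q]\to\mathbb{Z}[G^q]$ there, and invoke the comparison theorem for convergent first-quadrant spectral sequences. Your version is longer but more self-contained: it stays within the oriented chain complexes that the paper actually uses for $\mathcal{C}_{p,q}$, it handles explicitly the point (invisible in the realization argument) that $\nu$ can collapse a nondegenerate simplex of $S^\infty_{\mathbb{N}}$ to a degenerate one, which is sent to zero in oriented chains, and as a by-product it reproves $H_*(T\mathcal{C})\cong H_*(G)$. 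Two minor remarks. First, the filtration you want is by the \emph{vertical} degree $q$ (so that $d^0=\partial^h$ and $E^1_{q,p}=H^h_p(\mathcal{C}_{*,q})$); calling it the filtration by horizontal degree would give the other spectral sequence, whose $E^1$ is the vertical homology. Second, the step you flag as the main technical obstacle --- identifying $d^1$ on $E^1_{0,*}$ with the bar differential --- is not actually needed for this proposition: since both $E^1$ pages vanish for $p>0$ and $\nu$ is an isomorphism at $p=0$, it is an isomorphism of $E^1$ pages whatever $d^1$ is, and the comparison theorem already yields the isomorphism on the abutments. The bar-complex identification is only required if you also want to name the common value as $H_*(\Gamma_g^1)$.
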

\begin{proof} 
Each of the bi-simplicial sets realizes as the product of a $K(\Gamma_g^1,1)$ and an infinite simplex. On the realizations the map induced by $\nu$ is the product of the identity and a homotopy equivalence, and therefore a homology equivalence on their associated total complexes.
\end{proof}

\begin{prop}\label{INVERSE} The chain map $ \bar \nu_*:H_0^v{\mathcal{C}}_\mathbb{N}\rightarrow H_0^v{\mathcal{C}}_\mathbb{I}$ induced by $\nu$ is an isomorphism in homology
and cohomology. 
\end{prop}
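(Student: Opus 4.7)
I would deduce this from Proposition \ref{EQUIV} via a spectral sequence comparison. The double chain complex $\mathcal{C}$ carries a filtration by vertical degree giving a spectral sequence $E^r_{p,q}\Rightarrow H_{p+q}(T\mathcal{C})\cong H_{p+q}(\Gamma_g^1;\mathbb{Z})$, with $E^1_{p,q}=H_q^v(\mathcal{C}_{p,*})$. The orbit chain complex $H_0^v\mathcal{C}$ is precisely the row $E^1_{*,0}$, and its homology is the $E^2_{*,0}$-line. The map $\nu$ induces a morphism of these spectral sequences which, by Proposition \ref{EQUIV}, is an isomorphism on the $E^\infty$-page.

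To upgrade this to an isomorphism of $E^2_{*,0}$-lines, I would analyze $\bar\nu_*$ directly. The chain map sends the $\Gamma_g^1$-orbit of a $(p+1)$-subset $\{v_0,\ldots,v_p\}\subseteq S_\mathbb{N}$ to the orbit of $\{A_{v_0},\ldots,A_{v_p}\}\subseteq S_\mathbb{I}$, or to zero if the inversive classes coincide. Surjectivity at the chain level is immediate by choosing representatives. The kernel in degree $p$ is generated by two kinds of elements: the degenerate Nielsen orbits that contain both some $v$ and its inverse $v^{-1}$, and differences of Nielsen orbits whose elements agree with one another up to inversion coordinate by coordinate.

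I would then show that this kernel is acyclic by constructing an explicit contracting chain homotopy built from the involution $\iota\in\Gamma_g^1$ with $\iota_*(\mathrm{a}_0)=\mathrm{a}_0^{-1}$ (the hyperelliptic involution for $g\geq 2$, rotation by $\pi$ for $g=1$), together with transitivity of the $\Gamma_g^1$-action on $S_\mathbb{N}$. Once $\bar\nu_*$ is established as a quasi-isomorphism, the cohomology statement follows from naturality of the Universal Coefficient Theorem applied to both orbit chain complexes.

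The main obstacle is the acyclicity of the kernel. A single involution $\iota$ only realizes ``total'' inversion on a chosen subset, but the kernel involves independent inversions on each of up to $p+1$ coordinates. Cancelling these partial-inversion discrepancies demands a careful induction that trades them off against the simplicial boundary, most cleanly phrased as a Cartan--Leray style argument for the branched cover $\nu\colon S_\mathbb{N}^\infty\to S_\mathbb{I}^\infty$, or equivalently by a diagonal construction combining $\iota$-translations with the vertical homology inside $\mathcal{C}_{p,*}$.
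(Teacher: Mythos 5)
Your proposal has a genuine gap at its central step, and you have in effect named it yourself. Reducing the proposition to ``$\bar\nu$ is a degreewise surjection with acyclic kernel'' is a legitimate reformulation, but the acyclicity of that kernel \emph{is} the proposition (given surjectivity, which is indeed immediate), and the proposal does not prove it. The element you would use exists --- $S^{2g}$ inverts every generator ${\rm a}_i$ simultaneously, so it realizes total inversion on a tuple --- but, as you observe, the kernel is generated by discrepancies coming from \emph{independent} inversions in each coordinate (together with the orbits that become degenerate because some $v_i=v_j^{-1}$), and no contracting homotopy handling these is exhibited. The appeal to a ``Cartan--Leray style argument for the branched cover $S^\infty_{\mathbb N}\to S^\infty_{\mathbb I}$'' does not apply off the shelf: the fibers are not uniform (they collapse over degenerate simplices), and the relevant complexes are orbit complexes of oriented chains, not chains on a quotient space. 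A secondary problem is the last step: the paper's Remark 4.2 points out that the orbit complexes are \emph{not free} (odd-dimensional chain groups are $\mathbb{Z}_2$'s), so the Universal Coefficient Theorem does not apply to them, and a mere quasi-isomorphism between non-free complexes need not induce an isomorphism on $\mathrm{Hom}(-,\mathbb{Z})$-cohomology; you would need a chain homotopy equivalence or an explicit splitting. (The opening spectral-sequence comparison also buys nothing: Proposition \ref{EQUIV} gives an isomorphism on abutments, which does not yield an isomorphism of $E^\infty$-pages, and even that would not control the $E^2_{*,0}$-line here, since --- unlike the universal case of Theorem \ref{HOMOUNiVERSAL} --- the isotropy groups of the $\Gamma_g^1$-actions are not acyclic and the spectral sequences do not collapse to the bottom row.)

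For comparison, the paper avoids any analysis of the kernel by going in the opposite direction: it constructs an explicit section $j_*:T\mathcal{C}_{\mathbb I}\to T\mathcal{C}_{\mathbb N}$ of $\nu_*$ by choosing a representative point for each inversive (Remark \ref{chooseInv}), checks that this is compatible with orbits so that it descends to $\bar j_*$ on the orbit complexes, and then uses Proposition \ref{EQUIV} to promote the chain-level identity $\nu_*j_*=\mathrm{id}$ to the statement that $j_*$, and hence $\bar j_*$, is a two-sided inverse. If you want to complete your route, the honest work is to produce the contracting homotopy for the kernel (or a chain homotopy inverse to $\bar\nu$), which is precisely the content your plan defers.
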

\begin{proof} We find an inverse chain map $j_*:T{\mathcal{C}}_\mathbb{I} \rightarrow T{\mathcal{C}}_\mathbb{N}$ to  $\nu_*,$
in homology. By functoriality it then induces the inverse $\bar j_*$ to $H_0^v{\mathcal{C}}_\mathbb{N}\rightarrow H_0^v{\mathcal{C}}_\mathbb{I}$ in homology.
For each inversive $X$ choose a  representative point $x$ on the circle, and extend the choice to $n$-tuples and thereby identify the infinite simplex on inversives as a sub-complex of the infinite simplex on the points of ${\mathbb S}^1.$ Since an element of the mapping class group acts component-wise this assignment commutes with the action of $\Gamma_g^1$ for it take the orbits of inversives $(X_1,...,X_n)$ to the orbits of representatives $(x_1,...,x_n).$ 
So $j_*$ preserves the  bi-simplicial structure and induces a chain map $j_*:T{\mathcal{C}}_\mathbb{I} \rightarrow T{\mathcal{C}}_\mathbb{N},$  By construction $j_*$ is a right inverse to $\nu_*,$ but it is then a two-sided inverse in homology since $\nu_*$ is an isomorphism in homology. The associated cochain complexes are also equivalent so we have an isomorphism in cohomology as well.
\end{proof}

\begin{rem}\label{chooseInv} Given the inversives $A_0, A_1,...,A_{2g}$ we choose $j_*:T{\mathcal{C}}_\mathbb{I} \rightarrow T{\mathcal{C}}_\mathbb{N}$ so that $A_i  \mapsto a_i$ for $i$ even, and $A_i  \mapsto a^{-1}_i$ for $i$ odd. 
\end{rem}

\end{subsection}

\begin{subsection}{Characteristic morphisms}\label{CHAR}
Consider the presentation of $\pi_1(\Sigma_g, z)$ from Section \ref{Nielsen} and let ${{\rm a}}^{-1}_{2g} = {{\rm a}}_0 {{\rm a}}_1\cdots {{\rm a}}_{2g-1}$.   As before, we label the forward endpoint by $\rm a_{2g}$  and the backward endpoint by $\rm a_{2g}^{-1}$ in $\partial \D$ of the  translation axis of ${\rm a_{2g}}$.  Whether considering the ${{\rm a}}_i^{\pm}$ as elements of  $\pi_1(\Sigma_g, z)$ or points in $\S1$, we see that there exist mapping classes, which are homeomorphisms of the circle, $S$ of order $4g$ and $T$ of order $2g+1$ given by
$$ S: {{\rm a}}_0 \to {{\rm a}}_{1 } \to \cdots \to {{\rm a}}_{2g-1} \to {{\rm a}}_0^{-1} \to \cdots \to {{\rm a}}_{2g-1}^{-1} \to {\rm a}_0
\hskip .5in T: {{\rm a}}_0 \to {{\rm a}}_1  \cdots \to {{\rm a}}_{2g-1} \to {{\rm a}}_{2g} \to \rm a_0 .$$
 The mapping class $d_{2g}={S}^{-1}{T}$ acts on $\pi_1(\Sigma_g,z)$ by taking 
$ {{\rm a}}_{2g}$ to ${{\rm a}}^{-1}_{2g-1}$  while fixing all ${{\rm a}}_i$ and their inverses for $0 \leq i \leq 2g-2.$ More generally, let $d_{2g-k}:={T}^{-k}({S}^{-1}T){T^k}\text{\  for \ } 0 \leq k \leq 2g.$ It takes ${{\rm a}}_{2g-k}$ to ${{\rm a}}^{-1}_{2g-k-1}$ mod$(2g+1)$, and keeps
all ${{\rm a}}_i$ and their inverses, with indices other than $2g-k$ and $2g-k-1,$ fixed. 

\begin{rem}
Notice $d_0$ maps ${\rm a}_0$ to  ${\rm a}^{-1}_{2g}$ and keeps all other ${\rm a}_i$ fixed. The ordering (3) must be preserved so, since $a_0$ is between
$a_{2g-1}$ and ${\rm a}^{-1}_1$  the image of  ${\rm a}_{0}$ is either between ${\rm a}_0$ and ${\rm a}_{2g-1}$ or between ${\rm a}_0$ and ${\rm a}^{-1}_1.$ Moreover $d_{2g}$ maps ${\rm a}^{-1}_{2g}$ to ${\rm a}_{2g-1}$ keeping all remaining points fixed, so only the first of the two possibilities can hold.
From the ordering (\ref{order1})  and this observation we have that the points, including $a_{2g},$ are ordered clockwise in $\Bbb S^1$ as follows.
\begin{equation}\label{order}
{{\rm a}}_0 < {{\rm a}}_{1}^{-1} < {{\rm a}}_{2}  < \cdots < {{\rm a}}_{2g-1}^{-1} < {{\rm a}}_{2g} < {{\rm a}}_{0}^{-1} < {{\rm a}}_1 <  {{\rm a}}_{2}^{-1} <\cdots < {{\rm a}}_{2g-2}^{-1} < {{\rm a}}_{2g-1}< {{\rm a}}_{2g}^{-1}
\end{equation}
\end{rem}

\begin{defn}  
For $ 0 \leq j \leq 2g$, we refer to $d_j$ as a {\it characteristic mapping class}. It determines a {\it characteristic morphism} in the double chain complex $\mathcal{C}_{\Bbb I}$ of the inversive action $$ [{ A}_0, { A}_1,...,\hat {{A}}_{j-1}, {A}_{j},...,{A}_{2g}] \cdot d_{j}\in(\mathcal{C_{\Bbb I}})_{2g-1,1}.$$ 
\end{defn}
The mapping classes $S$ and $T$ act on the set $\{A_0,...,A_{2g}\}$  as follows
$$S: A_0 \to A_1 \to \cdots \to A_{2g-1} \to  A_{0}, \hskip .5in   T: A_0 \to A_1 \to A_2 \cdots \to A_{2g-1} \to A_{2g} \to A_{0}.
$$
Furthermore, the characteristic mapping class $d_{2g-k}$ takes $A_{2g-k}$ to $A_{2g-k-1}$ mod$(2g+1)$, and  fixes all the inversives ${A_i}$, with indices other than $2g-k$ and $2g-k-1$.

These mapping classes satisfy the following identities in $\Gamma_g^1$:
\begin{equation}\label{identities}
\begin{array}{c}
d_{2g}d_{2g-1} \cdots d_{0}=(S^{-1}T)(T^{-1}S^{-1}T^2) \cdots (T^{-2g}S^{-1}T^{2g+1})=
(S^{-1})^{2g+1}T^{2g+1} = S^{-1} \\
(d_{2g}d_{2g-1} \cdots d_{0})^{2g} =  S^{2g} \hskip .25in   (d_{2g}d_{2g-1} \cdots d_{0})^{4g} = 1_{\Gamma_g^1}\end{array}
 \end{equation}
Note, the ``order of the action" of $S$ on the $A_i$ is now $2g,$ however that does not affect the order of $S$ in $\Gamma_g^1$
which remains $4g.$

\end{subsection}

\section{The Euler class and its orbit cocycles}\label{Euler}

\begin{subsection}{The universal Euler class and its orbit cocycle}

 The group $G= { Homeo^+\hskip .02in {\S1}}$  acts on the contractible simplicial complex which is the infinite simplex on $\S1.$ In this section we let  $\mathcal{C}={\mathcal{C}}_{p,q}$ be the double chain complex of oriented chains associated to the action, with  $T\mathcal{C}$ the corresponding total complex and $H_0^v{\mathcal{C}}$ the orbit chain complex.  The following homology computation appeared in \cite[Section 5]{Je12} and uses the fact that $Homeo^{+}{\Bbb R}$ is acyclic, \cite{Se78, Mc80}. See also \cite{FMN24} for a new proof.

\begin{thm}\label{HOMOUNiVERSAL} 
Consider $G= { Homeo^+\hskip .02in {\S1}}$  acting on the infinite simplex on $\S1.$ 
\begin{itemize}
\vskip .025in

\item[a)] The homology and cohomology of $H_0^v {\mathcal{C}}$ is 
${\Bbb Z}$ in even dimensions and $0$ in odd.  
A generator 
of $H_{2p}(H_0^v {\mathcal{C}})$ is represented by the cycle
$\mathbf{o}_p:= [{0, {2p}, 1,...,{2p-1}}] - [{ 0, 1,2,...,{2p}}]$ in $H_0^v {\mathcal{C}}$, 
where $ 0 < 1<2< \cdots $ is a countable set of clockwise oriented points of the circle. 
\item [b)] The orbit chain map ${\rm q}:T{\mathcal{C}}  \to H_0^v{\mathcal{C}}$
 induces a homology equivalence.

\end{itemize}
 \end{thm}

\begin{proof}
To prove a) we show that
$H_0^v{\mathcal{C}}$ is given by
$ {\mathbb{Z}} \leftarrow {\Bbb Z}_2 \leftarrow {\Bbb Z} \leftarrow {\Bbb Z}_2 \leftarrow
\cdots.$
Any clockwise ordered $(p+1)$-tuple,
$(x_0,...,x_p),$ of points of the circle
can be mapped to $\{{ 0 , 1, 2,...,p }\}$ in some order. Hence there are at most two
orientation classes of $p$-simplices, one represented by $[{ 0 , 1, 2,..., p }]$ and one
represented by any odd permutation of the vertices.
When $p$ is even, a cyclic permutation of the coordinates of a $p$-simplex determines
a new simplex in the same orientation class,
so that there are two orientation classes and the group of oriented $p$-chains is
isomorphic to $\mathbb{Z},$ with
$+1$ and $-1$ representing the classes $[{0, {2p}, 1,...,{2p-1}}]$ and $[{ 0,
1,2,...,{2p}}]$, respectively.
Note that $[{ 0, 1,2,...,{2p}}] = -[{0, {2p}, 1,...,{2p-1}}].$
When $p$ is odd a cyclic permutation of the coordinates of a $p$-simplex determines a
simplex in the opposite orientation class, so there is one class of order 2, and the $p$-th
chain group is isomorphic to $\mathbb{Z}_2.$ The homology of $H_0^v{\mathcal{C}}$ is ${
\Bbb Z}$ in even dimensions and $0$ otherwise, and is therefore generated in
dimension $2p$ by the class of the cycle ${ \mathbf{o}_{p} }=
2[{0, {2p}, 1,...,{2p-1}}]$ or equivalently $[{0, {2p}, 1,...,{2p-1}}] - [{ 0, 1,2,...,{2p}}].$ The
cohomology of $H_0^v{\mathcal{C}}$ is isomorphic to $\Bbb Z$
in even dimensions and $0$ in odd. This completes the proof of a).

For b) we will use the action of $G$ on ordered simplices as well as on oriented
simplices. The former is ``right for isotropy" and the latter is ``right for orbits".
The homomorphism from ordered chains on a simplicial complex to oriented chains is
defined by associating to an ordered simplex its orientation class. It induces a map from
the double chain complex $\Delta$ of the action of $G$ on ordered simplices to the
double chain complex ${\mathcal{C}}$ of the action of $G$ on oriented simplices which is a
homology equivalence on each horizontal complex. Therefore there is
chain homomorphism
$T{\Delta} \longrightarrow T{\mathcal{C}}$ which is well defined on orbits and functorially
determines a chain homomorphism
$H_0^v\Delta \longrightarrow H_0^v\mathcal{C}.$ A chain inverse $T{\mathcal{C}} \longrightarrow
T{\Delta}$ is defined by totally ordering the vertices and choosing for each oriented
simplex the ordered simplex given by the total ordering. It too is well defined on orbit
complexes so induces a chain inverse $H_0^v{\mathcal{C}} \longrightarrow H_0^v\Delta.$
Consider the commutative diagram of chain complexes
$$T{\Delta} \longrightarrow T{\mathcal{C}}$$
\vskip -.25in
$$ \hskip -.05in \rm q_{\Delta} \downarrow \hskip .5in \downarrow \rm q$$
$$H_0^v\Delta \longrightarrow H_0^v\mathcal{C}.$$
The chain homomorphisms $T{\Delta} \longrightarrow  T{\mathcal{C}}$  and $H_0^v\Delta \longrightarrow H_0^v\mathcal{C}$ induce  homology equivalences. We show that
$\rm q_{\Delta}$ is a homology equivalence, which implies that $\rm q$ is also, and proves b).

The double complex ${ \Delta}_{p,q}$ is the ${\mathcal{E}}^0$-term of the spectral sequence
$${\mathcal{E}}^2_{p,q}(T{\Delta}) = H_{p}^h H_{q}^v({\Delta})
\Rightarrow H_{p+q}({T{\Delta}}).$$
We observe that computing homology vertically then horizontally in the the spectral
sequence gives
\begin{equation*} {\mathcal{E}}_{p,q}^{2} =
\begin{cases} H_{p}(H_0^v{\Delta}) &\text{ if }q=0 \\
0 &\text{ if } q>0. \end{cases}
\end{equation*}
The homology of each vertical groupoid is isomorphic to the direct sum of the homology
of its isotropy subgroups. The isotropy
group of each $p$-simplex is acyclic, for the isotropy group of any point $b$ is
isomorphic to the group of orientation preserving homeomorphisms of $\mathbb S^1 - \lbrace b \rbrace,$ which in turn is isomorphic to $Homeo^{+}{\Bbb R}.$
The isotropy group of a $(p+1)$-element subset of $\mathbb{S}^1$ is isomorphic to a $(p+1)$-fold cartesian product of $Homeo^{+}{\Bbb R},$ so it too is acyclic. This implies that all the homology groups  in ${\mathcal{E}}_{p,q}^{1}$ for $q>0$ are $0.$

It remains to show that the isomorphism between $H_p(T\Delta)$ and
$H_{p}(H_0^v{\Delta})$ is induced by ${\rm q}_\Delta.$ Let  $c_0 \in {\Delta}_{p,0}$
represent a cycle in $H_{p}(H_0^v{\Delta}).$ Because all the vertical groupoids are
acyclic it follows that the successive differentials of the spectral sequence are trivial,
and $c_0$ lifts to a $p$-cycle $c$ in  $T{\Delta}$ (see the introduction to Section \ref{SecTranCycle} for more on the lifting process). Then  ${\rm q}_{\Delta}(c) = [c_0]^v$ as required.  
\end{proof}

\begin{rem} The Universal Coefficient Theorem fails for the chain complex $H_0^v{\mathcal{C}}$ as it is not free. The group $Hom (H_{2p}(H_0^v{\mathcal{C}}),\mathbb{Z})$ is isomorphic to $\Bbb Z$ and generated by the homomorphism $\mathbf{o}^*_{p}$  dual to the {\it universal orbit cycle} $$\mathbf{o}_{p}=[{0, {2p}, 1,...,{2p-1}}] - [{ 0, 1,2,...,{2p}}]=2[{0, {2p}, 1,...,{2p-1}}].$$ On the other hand, the cohomology group $H^{2p}(H_0^v{\mathcal{C}}; \mathbb{Z})$ is isomorphic to $\Bbb Z$ and it is generated by the cocycle $(\mathbf{o}_{p}/2)^*\in Hom(H_0^v({\mathcal{C}}_{2p,*}), \mathbb{Z})$ dual to the chain $$(\mathbf{o}_{p}/2):= [{0, {2p}, 1,...,{2p-1}}].$$ 
\end{rem}

For $p\geq 1$, let us consider the chains  $$(\mathbf{e}_p/2):=[{0, {2p}, 1,...,{2p-1}}]\in {\mathcal{C}}_{2p,0}\text{\ \ and \ \ }\mathbf{e}_p:=[{0, {2p}, 1,...,{2p-1}}] - [{ 0, 1,2,...,{2p}}]=2(\mathbf{e}_p/2).$$ These are a chains in the total complex $T {\mathcal{C}}$ that ``lift"  $(\mathbf{o}_{p}/2)$ and the orbit cycle $\mathbf{o}_p$, respectively, in the sense that  ${\rm q}(\mathbf{e}_p/2)=[\mathbf{e}_p/2]^v=(\mathbf{o}_p/2)$ and ${\rm q}(\mathbf{e}_p)=[\mathbf{e}_p]^v=\mathbf{o}_p$.

\begin{defn} We refer to  $(\mathbf{o}_{p}/2)^*$ as the {\it $p$-th universal orbit cocycle} and to the pull-back cocyle $(\mathbf{e}_p/2)^*:={\rm q}^*(\mathbf{o}_{p}/2)^*$ as the  {\it $p$-th universal Euler cocycle}.
\end{defn}

\begin{prop} The $p$-th universal Euler cocycle $(\mathbf{e}_p/2)^*$ satisfies
$\langle(\mathbf{e}_p/2)^*, \mathbf{e}_p\rangle =2$ and it represents $2{\mathbf E}^p$ in the total complex $T{\mathcal{C}}.$
\end{prop}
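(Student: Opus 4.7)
The plan is to derive both assertions from the defining equation $(\mathbf{e}_p/2)^* = \mathrm{q}^*(\mathbf{o}_p/2)^*$, the naturality of evaluation, and the homology equivalence of $\mathrm{q}$ from Theorem \ref{HOMOUNiVERSAL}.

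For the numerical pairing $\langle (\mathbf{e}_p/2)^*, \mathbf{e}_p \rangle = 2$, I would unwind the definitions directly. The chain $\mathbf{e}_p$ lies purely in bidegree $(2p,0)$, so $\mathrm{q}(\mathbf{e}_p) = [\mathbf{e}_p]^v = \mathbf{o}_p = 2(\mathbf{o}_p/2)$ in $H_0^v \mathcal{C}_{2p,*}$, and naturality of evaluation gives
$$\langle (\mathbf{e}_p/2)^*, \mathbf{e}_p \rangle = \langle (\mathbf{o}_p/2)^*, \mathrm{q}(\mathbf{e}_p) \rangle = \langle (\mathbf{o}_p/2)^*, 2(\mathbf{o}_p/2) \rangle = 2.$$

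For the cohomological identification, my approach is a Kronecker-pairing comparison. First, $(\mathbf{e}_p/2)^*$ is a cocycle, being the $\mathrm{q}^*$-pullback of the cocycle $(\mathbf{o}_p/2)^*$ that generates $H^{2p}(H_0^v\mathcal{C}; \mathbb{Z})$, so $[(\mathbf{e}_p/2)^*] = k\,\mathbf{E}^p$ for some integer $k$ because $H^{2p}(T\mathcal{C}; \mathbb{Z}) \cong \mathbb{Z}$ is generated by $\mathbf{E}^p$. Next, I would produce a distinguished generator of $H_{2p}(T\mathcal{C}) \cong \mathbb{Z}$: using the vanishing of vertical groupoid homology above $q=0$ established in the proof of Theorem \ref{HOMOUNiVERSAL}, the chain $\mathbf{e}_p$ extends by the standard zig-zag lifting to a cycle $\tilde{\mathbf{e}}_p = \mathbf{e}_p + c_1 + \cdots + c_{2p}$ in the total complex with $c_i \in \mathcal{C}_{2p-i,i}$, and the homology equivalence $\mathrm{q}_*$ then ensures that $[\tilde{\mathbf{e}}_p]$ generates $H_{2p}(T\mathcal{C})$. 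Since $\mathrm{q}^*(\mathbf{o}_p/2)^*$ vanishes on every summand $\mathcal{C}_{p',q}$ with $q>0$, only the $\mathbf{e}_p$-term of $\tilde{\mathbf{e}}_p$ contributes to the pairing, giving $\langle (\mathbf{e}_p/2)^*, \tilde{\mathbf{e}}_p \rangle = 2$ by the first part. On the other hand, the Universal Coefficient Theorem applied to the free complex $T\mathcal{C}$, together with $H_{2p-1}(T\mathcal{C}) \cong H_{2p-1}(H_0^v\mathcal{C}) = 0$, yields $H^{2p}(T\mathcal{C}; \mathbb{Z}) \cong \mathrm{Hom}(H_{2p}(T\mathcal{C}), \mathbb{Z})$, forcing $\langle \mathbf{E}^p, [\tilde{\mathbf{e}}_p] \rangle = \pm 1$ and therefore $k = \pm 2$. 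The positive sign is then fixed by the orientation conventions of Section \ref{EULER}.

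The conceptual subtlety worth emphasizing is the origin of the factor $2$: it comes entirely from the identity $\mathbf{o}_p = 2(\mathbf{o}_p/2)$ in the orbit chain group, so the cochain dual to $(\mathbf{o}_p/2)$ evaluates to $2$ on a homology generator rather than to $1$. Everything else in the argument, in particular the existence of the cycle lift $\tilde{\mathbf{e}}_p$ and the fact that it represents a generator of $H_{2p}(T\mathcal{C})$, is already supplied by Theorem \ref{HOMOUNiVERSAL}, so no new construction is required.
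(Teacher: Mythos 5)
Your proposal is correct and follows essentially the same route as the paper: compute $\langle(\mathbf{e}_p/2)^*,\mathbf{e}_p\rangle=2$ by adjointness of ${\rm q}$ and ${\rm q}^*$, then use the Universal Coefficient Theorem together with the homology equivalence of ${\rm q}$ from Theorem \ref{HOMOUNiVERSAL} to identify the class of $(\mathbf{e}_p/2)^*$ as twice the generator $\mathbf{E}^p$. The only cosmetic difference is that you pair against an explicitly lifted cycle $\tilde{\mathbf{e}}_p$ generating $H_{2p}(T\mathcal{C})$, whereas the paper phrases the same comparison through the composite $({\rm q}^*)^{-1}\circ\beta$ and the dual homomorphism $\mathbf{o}_p^*$; both treat the sign as a matter of orientation convention.
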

\begin{proof}

Since the class ${\mathbf E}^p$ generates the cohomology group $H^{2p}(G;\mathbb{Z})\cong\mathbb{Z}$, the cocycle $(\mathbf{e}_p/2)^*$ represents an integral multiple of ${\mathbf E}^p$. The total complex $T{\mathcal{C}}$ satisfies the Universal Coefficient Theorem (\ref{UCT}) and we know from Theorem \ref{HOMOUNiVERSAL} that ${\rm q}$ induces an isomorphism in homology. Under the composition of isomorphisms 
$$H^{2p}({T\mathcal{C}};\Bbb Z)\xrightarrow{\beta} Hom(H_{2p}(T{\mathcal{C}}), \Bbb Z)\xrightarrow{ ({\rm q}^*)^{-1}} Hom(H_{2p}(H_0^v{\mathcal{C}}), \Bbb Z)$$
the cohomology class ${\mathbf E}^p$  corresponds to $\mathbf{o}_p^*$ the homomorphism dual to the orbit cycle $\mathbf{o}_p$, which generates $Hom(H_{2p}(H_0^v{\mathcal{C}}), \Bbb Z)\cong \Bbb Z$. On the other hand, 
$$\langle(\mathbf{e}_p/2)^*, \mathbf{e}_p\rangle =\langle{q}^*(\mathbf{o}_{p}/2)^*, \mathbf{e}_p\rangle=\langle(\mathbf{o}_{p}/2)^*, { q}(\mathbf{e}_p)\rangle=\langle(\mathbf{o}_{p}/2)^*, \mathbf{o}_{p}\rangle=2.$$
Therefore the composition $({\rm q}^*)^{-1}\circ\beta$  takes the cohomology class represented by the cocycle $(\mathbf{e}_p/2)^*$ to $2\mathbf{o}_p^*$. It follows that the cocycle $(\mathbf{e}_p/2)^*$ corresponds to $2{\mathbf E}^p$.
\end{proof}

\end{subsection}

\begin{subsection}{The Euler class of $\Gamma_g^1$ and its orbit cocycles }\label{OrbitCocycles}

 We now consider the Nielsen and inversive actions of $\Gamma_g^1$ to study the behavior of the $g$-th power of the Euler class of $\Gamma_g^1$.
\begin{defn}\label{EulerMCG}
    The Nielsen action $\rho:\Gamma_g^1\hookrightarrow \Homeo\S1$ pulls-back the powers  $\mathbf{E}^p$
producing classes $\E^p \in H^{2p}({\Gamma}_g^ 1;\Bbb Z)$ for all  $p\geq 1$. We refer to $\E$ as the {\it  Euler class of $\Gamma_g^1$}.
\end{defn}

Consider the following diagram of chain complexes:
\begin{equation}\label{diagram}
\begin{array}{ccccc}
T\mathcal{C}_\mathbb{I} & \rightarrow & T\mathcal{C}_\mathbb{N}  & \rightarrow & T\mathcal{C} \\
\downarrow & & \downarrow & &\downarrow \\
 H_0^v{\mathcal{C}}_\mathbb{I}  &\rightarrow & H_0^v{\mathcal{C}}_\mathbb{N} & \rightarrow  & H_0^v{\mathcal{C}}
\end{array}
\end{equation}
The vertical arrows are the corresponding orbit chain maps ${\rm q}_\mathbb{I}$, ${\rm q}_\mathbb{N}$ and ${\rm q}$; the horizontal arrows on the right are induced by the Nielsen action $\rho$; the horizontal arrows on the left are induced by the map $j_*$ defined in the proof of Proposition \ref{INVERSE}. We use this diagram and the  universal cocycles to define cocycles in the total complexes associated to the Nielsen and inversive actions.

\begin{defn}We refer to  $({o}_{g}/2)^*_{\Bbb N}:=\rho^*((\mathbf{o}_g/2)^*)$ as the {\it $g$-th orbit cocycle for the Nielsen action} and to ${({e}_g/2)}^*_{\Bbb N}:=\rho^*((\mathbf{e}_g/2)^*)$ as the {\it $g$-th Euler cocycle for the Nielsen action}.
\end{defn} 

From the definition of the Euler class, we have that the cocycle $(e_g/2)^*_{\Bbb N}$  represents the cohomology class $2\E^g\in H^{2g}(\Gamma_g^1;\mathbb{Z})$ in the total complex of the Nielsen action. 

Consider the {\it orbit chains} in the complex $H_0^v{\mathcal{C}}_\mathbb{N}$
$${(o_g/2)}_{\Bbb N} := [{{{\rm a}}^{-1}_0, {{\rm a}}^{-1}_{2g}, {{\rm a}}_1,{{\rm a}}^{-1}_{2},...,{{\rm a}}_{2g-1}}]\text{\ \ \ and \  \  \  }$$
$${(o_g)}_{\Bbb N} := [{{{\rm a}}^{-1}_0, {{\rm a}}^{-1}_{2g}, {{\rm a}}_1,{{\rm a}}^{-1}_{2},...,{{\rm a}}_{2g-1}}] - [{{\rm a}}^{-1}_0, {{\rm a}}_1, {{\rm a}}^{-1}_2,...,{{\rm a}}_{2g-1}, {{\rm a}}_{2g}^{-1}]=2{(o_g/2)}_{\Bbb N},$$ 
and the corresponding ``lifts'' to the total complex $T\mathcal{C}_{\mathbb{N}}$ 
$${(e_g/2)}_{\Bbb N} := [{{{\rm a}}^{-1}_0, {{\rm a}}^{-1}_{2g}, {{\rm a}}_1,{{\rm a}}^{-1}_{2},...,{{\rm a}}_{2g-1}}]\text{\ \ \ and \  \  \  }$$
$${(e_g)}_{\Bbb N} := [{{{\rm a}}^{-1}_0, {{\rm a}}^{-1}_{2g}, {{\rm a}}_1,{{\rm a}}^{-1}_{2},...,{{\rm a}}_{2g-1}}] - [{{\rm a}}^{-1}_0, {{\rm a}}_1, {{\rm a}}^{-1}_2,...,{{\rm a}}_{2g-1}, {{\rm a}}_{2g}^{-1}]=2{(e_g/2)}_{\Bbb N},$$ 
which are chains in  $({\mathcal{C}_{\Bbb N}})_{2g,0}$. Since the elements $\{{\rm a}_i^{\pm 1}\}$ are cyclically ordered as indicated in (\ref{order}), these chains satisfy $\rho_*({(o_g)}_{\Bbb N})=\mathbf{o}_g$ and $\rho_*({(e_g)}_{\Bbb N})=\mathbf{e}_g$. Moreover, evaluation on the cocycles for the Nielsen action gives  $$\langle (e_g/2)_{\Bbb N}^*, {(e_g)}_{\Bbb N} \rangle=\langle (o_g/2)_{\Bbb N}^*, {(o_g)}_{\Bbb N}\rangle=2 \text{ \ and \ } \langle (e_g/2)_{\Bbb N}^*, {(e_g/2)}_{\Bbb N} \rangle=\langle (o_g/2)_{\Bbb N}^*, {(o_g/2)}_{\Bbb N}\rangle=1.$$

\begin{defn}
The {\it $g$-th orbit cocycle for the inversive action of $\Gamma_g^1$} is $({o}_{g}/2)^*_{\Bbb I}: = \bar j^*({o}_{g}/2)^*_{\Bbb N} $ and the {\it $g$-th Euler cocycle for the inversive action} is 
$({e}_{g}/2)_{\Bbb I}^*:={\rm q}_{\mathbb{I}}^*(({o}_{g}/2)^*_{\Bbb I}).$ 
\end{defn}

Consider the {\it orbit chains} in the complex $H_0^v{\mathcal{C}}_\mathbb{I}$
$${(o_g/2)}_{\Bbb I} := [A_0, A_{2g}, A_{1},\ldots,A_{2g-1}]\text{\ \ \ and \  \  \  }$$
$${(o_g)}_{\Bbb I} := [A_0, A_{2g}, A_{1},\ldots,A_{2g-1}] - [A_0, A_1, A_2,\ldots,A_{2g-1}, A_{2g}]=2{(o_g)}_{\Bbb I},$$ 
and the corresponding ``lifts'' to the total complex $T\mathcal{C}_{\mathbb{I}}$ 
$${(e_g/2)}_{\Bbb I} := [A_0, A_{2g}, A_{1},\ldots,A_{2g-1}]\text{\ \ \ and \  \  \  }$$
$${(e_g)}_{\Bbb I} := [A_0, A_{2g}, A_{1},\ldots,A_{2g-1}] - [A_0, A_1, A_2,\ldots,A_{2g-1}, A_{2g}]=2{(e_g)}_{\Bbb I},$$ 
which are chains in  $({\mathcal{C}_{\Bbb I}})_{2g,0}$.  Notice that by the choices in Remark \ref{chooseInv},  these chains satisfy $\bar j_*({(o_g)}_{\Bbb I})={(o_g)}_{\Bbb I}$ and $j_*({(e_g)}_{\Bbb I})={(e_g)}_{\Bbb I}$.   Evaluation on the cocycles for the inversive action gives  $$\langle (e_g/2)_{\Bbb I}^*, {(e_g)}_{\Bbb I} \rangle=\langle (o_g/2)_{\Bbb I}^*, {(o_g)}_{\Bbb I}\rangle=2 \text{ \ and \ } \langle (e_g/2)_{\Bbb I}^*, {(e_g/2)}_{\Bbb I} \rangle=\langle (o_g/2)_{\Bbb I}^*, {(o_g/2)}_{\Bbb I}\rangle=1.$$

Hence, the arrows in diagram (\ref{diagram}) relate the cocycles and chains that we have constructed in the total complexes and the orbit chain complexes as follows.

\begin{multicols}{2}
\begin{equation*}
\begin{array}{ccccc}
(e_g/2)^*_{\Bbb I} & \mapsfrom & (e_g/2)^*_{\Bbb N} & \mapsfrom &(\mathbf{e}_g/2)^*\\
\upmapsto & & \upmapsto & & \upmapsto\\
(o_g/2)^*_{\Bbb I} & \mapsfrom & (o_g/2)^*_{\Bbb N} & \mapsfrom &(\mathbf{o}_g/2)^*
\end{array}
\end{equation*}

\begin{equation*}
\begin{array}{ccccc}
(e_g)_{\Bbb I} & \mapsto & (e_g)_{\Bbb N} & \mapsto &\mathbf{e}_g\\
\downmapsto & & \downmapsto & & \downmapsto\\
(o_g)_{\Bbb I} & \mapsto & (o_g)_{\Bbb N} & \mapsto &\mathbf{o}_g
\end{array}
\end{equation*}
\end{multicols}

\vskip .25in
From the constructions in this subsection, Definition \ref{EulerMCG} and  Proposition \ref{INVERSE}, we conclude the following.

\begin{prop}\label{InvCocycle} The $g$-th  Euler cocycle for the inversive action $({{e}_g/2})^*_{\Bbb I}$ represents, in the total complex $T{\mathcal{C}}_{\mathbb{I}}$, the cohomology class $2\E^g\in H^{2g}(\Gamma_g^1;\mathbb{Z})$.  Moreover, it satisfies $$\langle ({{e}_g/2})^*_{\Bbb I}, ({{e}_g})_{\Bbb I}\rangle=2\text{\ \ and \ \ } \langle (e_g/2)_{\Bbb I}^*, {(e_g/2)}_{\Bbb I} \rangle=1.$$
\end{prop}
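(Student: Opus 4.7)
The plan is to exploit the fact that $(e_g/2)^{*}_{\Bbb I}$ is a double pullback---first along $\bar j$, then along ${\rm q}_{\Bbb I}$---so that the statement reduces to a diagram-chase in (\ref{diagram}) combined with the pairing identities already displayed on the Nielsen side. First I would rewrite
$$(e_g/2)^{*}_{\Bbb I} \;=\; {\rm q}_{\Bbb I}^{*}(o_g/2)^{*}_{\Bbb I} \;=\; {\rm q}_{\Bbb I}^{*}\,\bar j^{*}(o_g/2)^{*}_{\Bbb N}.$$
The left-hand square of (\ref{diagram}) commutes at the chain level: $j_{*}$ was constructed in the proof of Proposition~\ref{INVERSE} to respect the bi-simplicial structure, and the orbit chain map ${\rm q}$ is natural with respect to $\Gamma_g^1$-equivariant simplicial maps, giving ${\rm q}_{\Bbb N}\circ j_{*} = \bar j_{*}\circ {\rm q}_{\Bbb I}$. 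Dualizing yields ${\rm q}_{\Bbb I}^{*}\bar j^{*} = j^{*}{\rm q}_{\Bbb N}^{*}$, so $(e_g/2)^{*}_{\Bbb I} = j^{*}\bigl((e_g/2)^{*}_{\Bbb N}\bigr)$.

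By Definition~\ref{EulerMCG} and the preceding proposition, $(e_g/2)^{*}_{\Bbb N} = \rho^{*}(\mathbf{e}_g/2)^{*}$ represents $\rho^{*}(2\mathbf{E}^{g}) = 2\E^{g}$ in $H^{2g}(T{\mathcal C}_{\Bbb N}) \cong H^{2g}(\Gamma_g^1;\mathbb{Z})$. Proposition~\ref{INVERSE} moreover asserts that $j^{*}$ is a cohomology isomorphism identifying $H^{2g}(T{\mathcal C}_{\Bbb N})$ with $H^{2g}(T{\mathcal C}_{\Bbb I})$, so $(e_g/2)^{*}_{\Bbb I}$ represents $2\E^{g}$ in $T{\mathcal C}_{\Bbb I}$ as well.

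For the pairings I would apply the pullback adjunction twice. Using ${\rm q}_{\Bbb I}((e_g)_{\Bbb I}) = [(e_g)_{\Bbb I}]^{v} = (o_g)_{\Bbb I}$ and ${\rm q}_{\Bbb I}((e_g/2)_{\Bbb I}) = (o_g/2)_{\Bbb I}$, the two pairings reduce to
$$\langle (o_g/2)^{*}_{\Bbb I},\, (o_g)_{\Bbb I}\rangle \quad \text{and} \quad \langle (o_g/2)^{*}_{\Bbb I},\, (o_g/2)_{\Bbb I}\rangle.$$
Expanding $(o_g/2)^{*}_{\Bbb I} = \bar j^{*}(o_g/2)^{*}_{\Bbb N}$ and using the identities $\bar j_{*}((o_g)_{\Bbb I}) = (o_g)_{\Bbb N}$ and $\bar j_{*}((o_g/2)_{\Bbb I}) = (o_g/2)_{\Bbb N}$---which follow from the specific choice of $j_{*}$ in Remark~\ref{chooseInv}---these collapse to the Nielsen pairings $\langle (o_g/2)^{*}_{\Bbb N},\,(o_g)_{\Bbb N}\rangle = 2$ and $\langle (o_g/2)^{*}_{\Bbb N},\,(o_g/2)_{\Bbb N}\rangle = 1$ already displayed above the definition of the inversive orbit cocycle.

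The argument is therefore entirely formal. The only non-definitional input---and hence the step that needs most care---is the verification that $\bar j_{*}((o_g)_{\Bbb I}) = (o_g)_{\Bbb N}$; concretely, that the Nielsen representative $[{\rm a}_0^{-1}, {\rm a}_{2g}^{-1}, {\rm a}_1, {\rm a}_2^{-1},\ldots,{\rm a}_{2g-1}]$ lies in the $\Gamma_g^1$-orbit prescribed by the assignment $j_{*}(A_i) = {\rm a}_i^{\pm 1}$ of Remark~\ref{chooseInv}. This is a direct check using the cyclic ordering (\ref{order}) and does not require any further structure beyond what is already recorded.
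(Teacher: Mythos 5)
Your argument is correct and follows essentially the same route as the paper, which gives no separate proof but simply cites ``the constructions in this subsection, Definition~\ref{EulerMCG} and Proposition~\ref{INVERSE}'' --- i.e.\ precisely the diagram chase in (\ref{diagram}), the adjunction $\langle {\rm q}^*\varphi, c\rangle = \langle \varphi, {\rm q}(c)\rangle$, and the reduction of the pairings to the Nielsen (and ultimately universal) orbit evaluations that you spell out. You also correctly flag the one substantive check, that $\bar j_*$ carries $(o_g)_{\Bbb I}$ to (the orbit of) $(o_g)_{\Bbb N}$; for the stated evaluations it in fact suffices that both push forward to $\mathbf{o}_g$ in $H_0^v\mathcal{C}$, which is immediate from the cyclic ordering (\ref{order}) and Remark~\ref{chooseInv}.
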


\end{subsection}

\section{The transition cycle}\label{SecTranCycle}

Consider the orbit chain map ${\rm q}:T{\mathcal{C}}  \to H_0^v{\mathcal{C}}$ of a given action. 
We attempt to ``lift" a specific chain $\mathbf{o}$ in $H_0^v{\mathcal{C}}$ 
to a cycle $c$ in the total complex  $T{\mathcal{C}}$ in the sense that ${\rm q}(c)=\mathbf{o}$.  

To describe the lifting process in general 
consider an $n$-chain $\mathbf{o}$ in $H_0^v{\mathcal{C}}$ and let $c_0 \in {\mathcal{C}}_{n,0}$ be a chain such that ${\rm q}(c_0)=[c_0]^v=\mathbf{o}$.  The horizontal boundary $b_0\in {\mathcal{C}}_{n-1,0}$  of $c_0$ is a horizontal cycle. Suppose it bounds vertically; let $c_1\in {\mathcal{C}}_{n-1,1}$ be the vertical boundary of $b_0$. Continuing in this manner, construct $b_{k} \in {\mathcal{C}}_{n-k,k-1}$ and, if possible, $c_{k} \in {\mathcal{C}}_{n-k,k}.$  If the lifting process leads to the construction of $c_{n} \in {\mathcal{C}}_{0,n}$, then $c =c_0 + c_1 + \cdots +  c_{n}$  is an $n$-cycle in the total chain complex  $T\mathcal{C}$ and ${\rm q}(c)=[c_0]^v=\mathbf{o}$. If the lifting terminates at an earlier stage $k,$ that is $b_{k}$ fails to lift further,  then the construction produces an $n$-boundary $b_0 +b_1 + \cdots +  b_{k}$. Moreover,  ${\rm q} (c_0+c_1+\cdots + c_k)=[c_0]^v=\mathbf{o}$, but the chain $c_0+c_1+\cdots + c_k\in T\mathcal{C}_n$ is not a cycle. 

In the universal setting the orbit cycle $\mathbf{o}_g$ lifts to a $2g$-cycle $c$ in $T{\mathcal{C}}$, since ${\rm q}$ induces a homology isomorphism. In contrast, the chain $(\mathbf{o}_g/2)=[{0, {2g}, 1,...,{g-1}}]$ lifts to the chain  $(\mathbf{e}_g/2)\in \mathcal{C}_{2g,0}$, but we cannot extend our construction to even $c_1,$ for its chain boundary is a sum of an odd number of distinct faces so orbits cannot cancel. 

 In this section we consider the inversive action of the mapping class group $\Gamma_g^1$ and attempt to lift the $2g$-chain $$o_g:=(o_g)_{\mathbb{I}}=[A_0, A_{2g}, A_{1},\ldots,A_{2g-1}] - [A_0, A_1, A_2,\ldots,A_{2g-1}, A_{2g}]$$ in $H_0^v{\mathcal{C}}_{\mathbb{I}}$ to a $2g$-cycle in  $T\mathcal{C}_{\mathbb{I}}$. 
We will show that the lifting will terminate with an element $t=b_1 \in ({\mathcal{C}}_{\mathbb{I}})_{2g-2,1}$ which we call the {\it transition cycle}. In Section \ref{SecTrans} we construct the cycle $t$ and analyze it combinatorially. This will lead us to a proof in Section \ref{SecHolonomy} that  $t$ is an obstruction to the lifting and then in Section \ref{SecTorsion} to a calculation of the torsion of $\E^g.$

After this point we suppress the subscript $\Bbb I$ when the inversive context is clear.

\begin{subsection}{Construction of the transition cycle}\label{SecTrans}

We apply the lifting process to the orbit cycle $o_g=(o_g)_{\mathbb{I}}$ of the inversive action. We start by taking $c_0= e_g=[A_0, A_{2g}, A_{1},\ldots,A_{2g-1}] - [A_0, A_1, A_2,\ldots,A_{2g-1}, A_{2g}]$.

\begin{prop}\label{CYCLE} There is a chain $c_1 \in {\mathcal{C}}_{2g-1,1}$ so that $\partial^v(c_1) = \partial^h(e_g).$
\end{prop}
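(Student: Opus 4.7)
The plan is to exploit the fact that the two top simplices $\sigma_1 = [A_0, A_{2g}, A_1, \ldots, A_{2g-1}]$ and $\sigma_2 = [A_0, A_1, \ldots, A_{2g-1}, A_{2g}]$ making up $e_g = \sigma_1 - \sigma_2$ share the same underlying vertex set and differ only in where $A_{2g}$ sits. First I would pair each face of $\sigma_1$ with the face of $\sigma_2$ having the same $2g$-element vertex set $\{A_0, \ldots, A_{2g}\} \setminus \{A_j\}$. Moving $A_{2g}$ between its slot in $\sigma_1$ and its slot in $\sigma_2$ is a cyclic permutation whose sign is $(-1)^{2g-1} = -1$ when $j = 0$ and $(-1)^{2g-2} = +1$ otherwise. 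Combining these identifications with the alternating signs in the horizontal differential collapses the $4g+2$ faces of $e_g$ to the single identity
\[\partial^h(e_g) \;=\; -2\,\partial^h(\sigma_2),\]
so it suffices to exhibit $c_1 \in (\mathcal{C}_{\Bbb I})_{2g-1,1}$ with $\partial^v(c_1) = -2\,\partial^h(\sigma_2)$.

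The characteristic morphisms of Section \ref{CHAR} are the natural building blocks. Writing $\sigma_2^{(k)}$ for the face of $\sigma_2$ obtained by deleting $A_k$, the morphism $[A_0, \ldots, \hat A_{j-1}, \ldots, A_{2g}] \cdot d_j$ has vertical boundary $\sigma_2^{(j)} - \sigma_2^{(j-1)}$ for $1 \leq j \leq 2g$, because the mapping class $d_j$ sends $A_j$ to $A_{j-1}$ and fixes every other inversive. For $j = 0$, under the convention $A_{-1} = A_{2g}$, the morphism $[A_0, \ldots, A_{2g-1}] \cdot d_0$ has vertical boundary $\sigma_1^{(0)} - \sigma_2^{(2g)}$, which via the cyclic sign identity rewrites as $-\sigma_2^{(0)} - \sigma_2^{(2g)}$.

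Constructing $c_1$ thus becomes a telescoping problem: look for
\[c_1 \;=\; \sum_{j=0}^{2g} \alpha_j\,[A_0, \ldots, \hat A_{j-1}, \ldots, A_{2g}] \cdot d_j\]
and match the coefficient of each $\sigma_2^{(k)}$ on the two sides of $\partial^v(c_1) = -2\,\partial^h(\sigma_2)$. This yields the recursion $\alpha_{j+1} = \alpha_j + 2(-1)^j$ for $1 \le j \le 2g-1$, together with the two boundary relations $\alpha_0 + \alpha_1 = 2$ and $\alpha_{2g} = \alpha_0 - 2$; the unique solution is $\alpha_0 = \alpha_1 = 1$ and $\alpha_j = (-1)^{j+1}$ for $j \ge 1$. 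The main point to watch is the asymmetry at $j = 0$: the extra minus sign from the cyclic identification at the face with vertex set $\{A_1, \ldots, A_{2g}\}$ is precisely what forces $\alpha_0 = 1$ rather than the $-1$ one would naively expect from the alternating pattern, and reconciling the two boundary conditions at $j = 0$ and $j = 2g$ is the single step that requires careful sign bookkeeping.
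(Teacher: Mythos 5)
Your proof is correct and essentially the same as the paper's: both build $c_1$ out of the characteristic morphisms $[A_0,\ldots,\hat{A}_{j-1},\ldots,A_{2g}]\cdot d_j$, and your coefficients $\alpha_0=1$, $\alpha_j=(-1)^{j+1}$ reproduce exactly the paper's explicit formula $c_1=\sum_{i=0}^{2g}(-1)^{i}[A_0,\ldots,\hat{A}_i,\ldots,A_{2g}]\,d_{i+1}$. The only difference is organizational --- you first collapse $\partial^h(e_g)$ to $-2\,\partial^h(\sigma_2)$ via the orientation identity $\sigma_1=-\sigma_2$ and then solve a telescoping recursion, whereas the paper pairs off faces of $Q$ and $R$ directly --- and your sign bookkeeping checks out (the phrase ``fixes every other inversive'' is harmlessly imprecise, since $d_j$ need not fix $A_{j-1}$, but $A_{j-1}$ does not occur in the source simplex).
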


\begin{proof}
We group the horizontal face maps of $e_g$ into pairs that lie in the same orbit. but have opposite signs.
Let $Q=  [ {A_0, A_{2g}, A_1,...,A_{2g-1}}]$ and $R = [ {A_0, A_1, A_2,...,A_{2g}}],$ then, by definition, $e_g=Q-R$. Notice that 
$\partial_0 Q - \partial_1 Q =   [{\hat{A}_0, A_{2g}, A_1,...,A_{2g-1}}] - [{A_0, {\hat {A}_{2g}},...,A_{2g-1}}].$

The two simplices on the right hand side are in the same orbit since 
the characteristic mapping class $d_0$ maps $[{A_0, A_1,...,A_{2g-1}}]$ to
 $[{A_{2g}, A_1,...,A_{2g-1}}],$ 
then $\partial_0 Q - \partial_1 Q= 0$ in the homology of the orbit complex.
For $3 \leq k \leq 2g-1,$ and $k$ odd consider
$$\partial_{k-1} Q - \partial_{k} Q=[{A_0, A_{2g}, A_1,...,\hat {A}_{k-1},...,A_{2g-1}}] - [{A_0, A_{2g},A_1, A_2,...,\hat {A}_{k-2},...,A_{2g-1}}].$$
The two simplices are  in the same orbit of $d_k$ and $\partial_{k-1} Q - \partial_{k}Q= 0$.
There is one face of $Q$ remaining which is $\partial_{2g}Q = [ {A_0, A_{2g}, 1,...,\hat {A}_{2g-1}}].$
Now consider $R$, and notice that the difference  
$\partial_{k-1}  R - \partial_{k} R=[{A_0, A_1,...,\hat {A}_{k},...,A_{2g-1},A_{2g}}] - [{A_0, A_1, A_2,...,\hat {A}_{k-1},...,A_{2g}}]$
consists  of simplices  in the same orbit of $d_{k-1}$ for $1 \leq k \leq 2g-1,$ and $k$ odd. Then $\partial_{k-1}  R - \partial_{k} R=0$. The remaining face is $\partial_{2g}R = [ {A_0, A_1,...,A_{2g-1}, \hat {A}_{2g}}].$
The two remaining faces of ${{e}_g},$   
$[ {A_0, A_{2g}, A_1,...,\hat {A}_{2g-1}}] - [ A_0, A_1,...,A_{2g-1}, \hat {A}_{2g}],$
are in the orbit of $d_{2g},$ which can be seen by cycling $A_{2g}$ to the last slot.
So $\partial^h(e_g)$ bounds vertically. \end{proof}

Explicitly:  $c_1=  \sum_{ i=0}^{2g}(-1)^{i}[ {A_0,...,\hat {A}_{i}, A_{i+1},...,A_{2g}}]d_{i+1,}\text{\ \ \ \ where \ \ }i \equiv n {\rm \hskip .025in mod}(2g+1). $

\end{subsection}

\begin{defn}
The {\it transition cycle} is the chain $t = \partial^h(c_1) \in {\mathcal{C}}_{2g-2, 1}.$    
It is  a cycle, vertically and horizontally, and a boundary horizontally. It breaks into two parts as follows:
\begin{equation}\label{transitionCycle}
t=\sum_{i >j }(-1)^{i+j}[{{A_0,...,\hat {A}_ j,...,\hat {A}_i,...,A_{2g}}}]d_{i+1} +
\sum_{i <j }(-1)^{i+ j-1}[{ {A_0,...,\hat {A}_i,...,\hat {A}_j,...,A_{2g}}}]d_{i+1}.
\end{equation}
\end{defn}

In summary, we have constructed a chain $c_0+c_1$ that lifts the orbit cycle $o_g$ for the inversive action, i.e. ${q}(c_0+c_1)=[e_g]^v=o_g$. The transition cycle is a boundary in the total complex: $t = \partial(c_0 +c_1).$
Next we  analyze the combinatorics of the transition cycle $t$ and use it to show, in Corollary 6.3 below, that the lifting process terminates.

\begin{subsection}{Combinatorial structure of the transition cycle}

Recall the $\Lambda_{p}$ are the vertical groupoids of the bi-simplicial set constructed from the inversive action of $\Gamma_g^1$ on oriented simplices of inversives. In particular, in the dimension of interest, an object of $\Lambda_{2g-2}$ is an orientation class of a $(2g-1)$-tuple of inversives and a morphism is a mapping class in $\Gamma_g^1$ acting ``inversively" on objects. The vertical homology of the double complex $\mathcal{C}$ in dimension $2g-2$ is precisely the homology of $\Lambda_{2g-2}.$

We decompose the transition cycle $t$ into an alternating  sum of $g+1$ distinct vertical $1$-cycles, $L_0,...,L_{g}$ in the groupoid $\Lambda_{2g-2}.$ 
\begin{equation}\label{transitionSum}
    t = L_0 - L_1 + L_2 + \cdots + (-1)^g L_g  
\end{equation}

The chains $L_0$ and $L_g$ will each be a sum of $2g+1$ terms of (\ref{transitionCycle}). The remaining chains will each be a sum of $2(2g+1)$ terms. Together they have $2g(2g+1)$ terms which agrees with the number of terms in (\ref{transitionCycle}).
In the formulas below integers are taken mod $2g+1.$ 

\vskip -.15in$$L_0 = \sum_{i=0}^{2g} [A_0,...,\hat {A}_i, \hat {A}_{i+1},...,A_{2g}]d_{i+1} \hskip .25in L_g = \sum_{i=0}^{2g}[A_0,...,\hat {A}_i,..., \hat {A}_{i+g},...,A_{2g}]d_{i+g +1}$$
\vskip -.25in 
$$L_k = \sum_{i=0}^{2g} [A_0,...,\hat {A}_i,..., \hat {A}_{i+k},...,A_{2g}]d_{i+k +1} + [0,...,\hat {A}_i,..., \hat {A}_{i+k +1},...,A_{2g}]d_{i+1},                                                                                                                                                                                                                                                                                                                                                                                                            \hskip .075in  1 \leq k \leq g-1.$$

We show in Proposition\ref{VertHom} that the vertical homology class of the transition cycle $t$ is the same as $4g(2g+1)$ times the vertical homology class of $[A_0,A_1,...,A_{2g-2}]\cdot d_{2g}.$ For each $0 \leq k \leq g$, we verify first that the chain $L_k$ is a vertical 1-cycle.  


\begin{prop} For each $0 \leq k \leq g$ the chain $L_k,$  is a vertical $1$-cycle. 
\end{prop}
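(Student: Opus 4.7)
The plan is to compute $\partial^v(L_k)$ directly for each $k$ and verify that the contributions cancel, using that for a generator $\sigma\cdot f \in \mathcal{C}_{2g-2,1}$ the vertical boundary is $\partial^v(\sigma\cdot f) = f(\sigma) - \sigma$, i.e.\ target minus source in the groupoid $\Lambda_{2g-2}$.

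The case $k = 0$ is immediate: the simplex $[A_0,\ldots,\hat{A}_i,\hat{A}_{i+1},\ldots,A_{2g}]$ contains neither $A_i$ nor $A_{i+1}$, and by construction $d_{i+1}$ sends $A_{i+1}\mapsto A_i$ while fixing every other inversive. Hence $d_{i+1}$ fixes the simplex, so each summand of $L_0$ is already a vertical $1$-cycle.

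For $1\leq k\leq g-1$, I would write $\sigma_{i,k}$ for the simplex omitting $\{A_i, A_{i+k}\}$ and $\sigma'_{i,k}$ for the one omitting $\{A_i, A_{i+k+1}\}$, with indices read mod $2g+1$. The characteristic morphism $d_{i+k+1}$ sends $A_{i+k+1}\mapsto A_{i+k}$ and so carries $\sigma_{i,k}$ to $\sigma'_{i,k}$, while $d_{i+1}$ sends $A_{i+1}\mapsto A_i$ and so carries $\sigma'_{i,k}$ to $\sigma_{i+1,k}$. Granting that these identifications preserve the natural orientation,
\begin{equation*}
\partial^v(L_k) = \sum_{i=0}^{2g}\bigl[(\sigma'_{i,k} - \sigma_{i,k}) + (\sigma_{i+1,k} - \sigma'_{i,k})\bigr] = \sum_{i=0}^{2g}(\sigma_{i+1,k} - \sigma_{i,k}) = 0,
\end{equation*}
by telescoping modulo $2g+1$. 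The case $k = g$ is analogous with only one family of terms: setting $\sigma_i := [A_0,\ldots,\hat{A}_i,\ldots,\hat{A}_{i+g},\ldots,A_{2g}]$, the morphism $d_{i+g+1}$ carries $\sigma_i$ to the simplex omitting $\{A_i, A_{i+g+1}\}$, which equals $\sigma_{i+g+1}$ by the congruence $(i+g+1)+g \equiv i \pmod{2g+1}$; summing and reindexing gives $\partial^v(L_g) = 0$.

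The main obstacle will be the orientation bookkeeping once indices wrap around modulo $2g+1$. When no wrap occurs, substituting a single vertex for its image produces a tuple already in natural order, so no sign enters. The only wrap case arises when the characteristic morphism involved is $d_0$ (sending $A_0\mapsto A_{2g}$); there I would verify that realigning the target tuple into natural cyclic order amounts to a single cyclic permutation of the $2g-1$ vertices of the simplex, whose sign is $(-1)^{2g-2} = +1$, so no orientation discrepancy arises and the telescoping above is valid as written.
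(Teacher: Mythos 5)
Your proof is correct, and the key verifications are the right ones: $d_{i+k+1}$ carries the simplex omitting $\{A_i,A_{i+k}\}$ to the one omitting $\{A_i,A_{i+k+1}\}$, $d_{i+1}$ carries that one to the simplex omitting $\{A_{i+1},A_{i+1+k}\}$, and the only orientation issue is the wrap-around through $d_0\colon A_0\mapsto A_{2g}$, which you correctly resolve by noting that a cyclic permutation of the $2g-1$ vertices has sign $(-1)^{2g-2}=+1$. Your route differs from the paper's in organization: the paper does not compute $\partial^v(L_k)$ termwise, but instead chains all $2(2g+1)$ morphisms of $L_k$ into a single composite (its displayed composite $(d_0d_k)(d_{2g}d_{k-1})\cdots(d_1d_{k+1})$) forming a closed directed loop in the groupoid $\Lambda_{2g-2}$ based at $[\hat A_0,\ldots,\hat A_k,\ldots,A_{2g}]$, and observes that a directed loop is automatically a $1$-cycle; for $k=g$ the loop closes up after only $2g+1$ steps. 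Your telescoping argument proves strictly what the proposition asserts and is more elementary, since it only needs sources and targets to cancel in pairs rather than to concatenate in a specific cyclic order. What the paper's formulation buys is the explicit loop structure itself: Lemma 5.4 compares neighboring loops $L_k$ and $L_{k+1}$ by flipping the order of commuting morphisms within each parenthesized pair and closing up squares, and Lemma 5.5 identifies $\sum L_k$ with the trivial loop $(d_{2g}\cdots d_0)^{4g}$ — arguments that presuppose each $L_k$ has been exhibited as one composable directed loop, not merely shown to be a cycle. So your proof is a valid and cleaner substitute for this proposition in isolation, but the paper's construction is doing double duty for the later free-homotopy arguments.
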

\begin {proof}
First consider any of the $2g+1$ terms of $L_0.$ Each is on its own a vertical cycle.

Now consider $L_k$ for  $1 \leq k \leq g-1,$ 
and the simplex  $[\hat {A}_0,\ldots, \hat {A}_{k},\ldots,A_{2g}]$ which is the coefficient of the first term in the sum.
To prove the Proposition we construct, for each $k,$  a sequence of composable characteristic morphisms, starting and finishing at the
object $[\hat {A}_0,..., \hat {A}_{k},...,A_{2g}].$  By construction these are $1$-cycles in the groupoid $\Lambda_{2g-2}$,  
hence vertical cycles. Then we show that each sequence represents the $1$-chain $L_k.$ In the following formula, for simplicity, we suppress the coefficients of each characteristic morphism, for they are determined by the initial object. The composite 
\begin{equation}\label{composite}(d_{0}d_{k} ) (d_{2g}d_{k-1}) \cdots (d_{2g-k+2}d_{1})(d_{2g-k+1}d_{0})(d_{2g-k}d_{2g} ) \cdots (d_2d_{k+2})(d_1d_{k+1})
\end{equation} maps the simplex 
$[\hat {A}_0,\ldots, \hat{A}_{k},...,A_{2g}]$ component-wise to 
$[ A_{2g-1},A_{2g},  \hat{A}_0,...,\hat {A}_k,...,A_{2g-2}],$
which is in the same orientation class, as we now observe.
The pair of characteristic homeomorphisms in each set of parentheses corresponds to the composite of two characteristic morphisms with a fixed index $i$
the first one of which comes from the sum on the left and the second from the sum on the right.
The first pair of characteristic morphisms  corresponds to $i=0$ and maps 
$[{ \hat {A}_0,...,\hat {A}_k,...,A_{2g}}]$ 
to $[{A_0, \hat {A}_1,...,\hat {A}_{k+1},...,A_{2g}}].$ Continuing in this manner, we see that the $2(2g+1)$ morphisms in (\ref{composite}) match up with the terms in $L_k,$ as claimed.

When $k = g$ the composite
$d_0(d_{g}d_{2g})( d_{g-1}d_{2g-1})\cdots(d_2d_{g+2})(d_1d_{g+1})$
maps $A_0$ to $ A_g$ and $A_g $ back to $A_0.$
This requires $2g+1$ transformations, so in this case the 
$(2g-1)$-tuple 
$[{\hat {A}_0,...,\hat {A}_{g+1},...,A_{2g}}]$
is restored in $2g+1$ steps, and shows that $L_g$ is a vertical cycle. 
\end{proof}

So each $L_i \in {\mathcal{C}}_{2g-2,1}$ is a vertical $1$-cycle and therefore can be represented by a {\it directed loop} in  $\Lambda_{2g-2}$ 
using the sequence of transformations (\ref{composite}).  Below we use the notation $a \sim_v b$  to  denote that two vertical $1$-cycles $a$ and $b$ are homologous in the  groupoid  $\Lambda_{2g-2}$.

\begin{lem}\label{Lemma1} $ L_1 \sim_v - L_2 \sim_v \cdots  \sim_v (-1)^{g} L_{g-1} \sim_v
 2(-1)^{g+1}L_g.$ 
\end{lem}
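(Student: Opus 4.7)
Each $L_k$ is a directed loop of composable characteristic morphisms in the vertical groupoid $\Lambda_{2g-2}$. The strategy is this: to prove $L_k \sim_v -L_{k+1}$ it suffices to exhibit a vertical $2$-chain $M_k \in \mathcal{C}_{2g-2,2}$, a sum of composable pairs of characteristic morphisms acting on appropriate $(2g-1)$-tuples, with
$$\partial^v(M_k) = L_k + L_{k+1},$$
and similarly $L_{g-1} \sim_v -2L_g$ will follow from an $M_{g-1}$ satisfying $\partial^v(M_{g-1}) = L_{g-1} + 2L_g$. Following the convention of Section \ref{GROUPCOHO}, for a $2$-simplex $\sigma \cdot (f, h)$ in which $h$ acts on $\sigma$ first and then $f$ acts on $h(\sigma)$, the vertical boundary is
$$\partial^v\bigl(\sigma \cdot (f, h)\bigr) = h(\sigma) \cdot f \;-\; \sigma \cdot (fh) \;+\; \sigma \cdot h,$$
so the ``composition face'' $-\sigma \cdot (fh)$ will be arranged to reconstitute terms of $L_{k+1}$, while the two ``outer'' faces will either reconstitute terms of $L_k$ or cancel in pairs around the cyclic arrangement of the summands of $M_k$.

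For the generic range $1 \leq k \leq g-2$, I would build $M_k$ by attaching to each characteristic morphism $\sigma \cdot d_j$ in $L_k$ a second characteristic morphism $d_i$ (acting first on $\sigma$), chosen so that the composition $d_j d_i$ is one of the characteristic morphisms appearing in $L_{k+1}$ on the same tuple. The selection is dictated by the pattern of hats: a term of $L_k$ whose tuple has hats separated by $k$ is completed so that the composition shifts the separation to $k+1$, matching a term of $L_{k+1}$. The outer faces of the summands of $M_k$ are then arranged so that, as one travels around the cyclic loop underlying $L_k$, each outer face $\sigma \cdot d_i$ of one $2$-simplex cancels against the outer face $d_{i'}(\sigma') \cdot d_{j'}$ of the next---sources and targets match exactly, and the alternating signs in \eqref{transitionSum} provide the opposite signs needed for cancellation. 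Direct tallying of the $2(2g+1)$ contributions then gives $\partial^v(M_k) = L_k + L_{k+1}$, hence $L_k \sim_v -L_{k+1}$.

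The final relation $L_{g-1} \sim_v -2L_g$ is the main obstacle and has a structural origin. When $k = g$, the two summands defining $L_k$ coincide under the cyclic identification modulo $2g+1$---hats at separation $g$ and at separation $g+1$ describe the same pair of positions in a cyclic sequence of length $2g+1$---so $L_g$ contains only $2g+1$ distinct morphisms, while each of $L_1, \ldots, L_{g-1}$ contains $2(2g+1)$. Consequently, when the pairing construction of the previous paragraph is applied with $k = g-1$, each characteristic morphism of $L_g$ appears \emph{twice} in $\partial^v(M_{g-1})$, once from each of the two summands of $L_{g-1}$, yielding $\partial^v(M_{g-1}) = L_{g-1} + 2L_g$. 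The hard part throughout is the combinatorial bookkeeping modulo $2g+1$: verifying that outer faces cancel correctly and that signs, including the collapse at $k = g$, are as claimed. The identities \eqref{identities}, especially $d_{2g}d_{2g-1}\cdots d_0 = S^{-1}$, provide the global consistency check that the cyclic loops constructed here actually close up.
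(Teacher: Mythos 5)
Your overall strategy --- realize $L_k\pm L_{k+1}$ as the vertical boundary of an explicit $2$-chain built by pairing the $2(2g+1)$ morphisms of $L_k$ against those of $L_{k+1}$, and extract the factor $2$ in the last step from the fact that $L_g$ has only $2g+1$ distinct terms because hat-separations $g$ and $g+1$ coincide modulo $2g+1$ --- is the right one, and that last observation matches the paper exactly. But the mechanism you propose for the generic step has a genuine flaw. A morphism of the groupoid $\Lambda_{2g-2}$ is a pair (group element, source), so the composition face $\sigma\cdot(d_jd_i)$ of your $2$-simplex carries the group element $d_jd_i$; a product of two distinct characteristic mapping classes is never itself a characteristic mapping class (it moves two of the inversives rather than one), so this face can never equal a term $\sigma'\cdot d_\ell$ of $L_{k+1}$. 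The composition faces therefore cannot ``reconstitute terms of $L_{k+1}$'' as you intend, and since the outer faces of your simplices are single characteristic morphisms, nothing else in your chain can absorb these composite faces either: the bookkeeping does not close.

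The fix --- and the paper's actual argument --- is to make the composition faces cancel against each other rather than survive. Write $L_k$ as the loop $(d_{k+1}d_0)(d_kd_{2g})\cdots(d_{k+2}d_1)$ and $L_{k+1}$ as the loop obtained by flipping each parenthesized pair. Each flip is a square with edges $d_m$ and $d_n$, filled by the difference of two triangles $\sigma\cdot(d_m,d_n)-\sigma\cdot(d_n,d_m)$: the two composition faces are $-\sigma\cdot(d_md_n)+\sigma\cdot(d_nd_m)$ and cancel precisely because $d_m$ and $d_n$ commute, while the four outer faces supply one pair of consecutive terms of $L_k$ and one of $L_{k+1}$. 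That commutativity is the essential ingredient you never isolate or verify: it holds because in every pair that occurs the index sets $\{m,m-1\}$ and $\{n,n-1\}$ are disjoint, so the commutator $d_n^{-1}d_m^{-1}d_nd_m$ fixes all the ${\rm a}_i$. Without it the squares do not close and no bounding $2$-chain of the kind you describe exists; with it, summing over the $2g+1$ squares gives the free homotopy (hence homology) between $L_k$ and $\pm L_{k+1}$, and the degeneration at $k=g-1$ that you correctly identified produces the coefficient $2$ on $L_g$.
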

\begin{proof} Consider neighboring chains $L_k$ and $-L_{k+1},$ $1 \leq k \leq g-2$ each oriented by the direction of the loops representing them.
 The loop $L_k,$  starting at $[{\hat {A}_0},...,{\hat {A}_{k+1}},...,A_{2g}],$ is formed by a sequence of morphisms using the composite 
 $(d_{k+1}d_{0})(d_{k} d_{2g})  \cdots (d_{k+3}d_2)(d_{k+2}d_1). $ If we flip the transformations within each set of parentheses,
$(d_{0}d_{k+1}) \cdots (d_2d_{k+3})(d_1d_{k+2}),$ we obtain the loop $L_{k+1}$ starting at $[\hat {A}_0,...,\hat {A}_{k+1},...,A_{2g}].$
Any square formed by pairs of consecutive morphisms in the two loops has the form

$$d_{m}$$
\vskip -.35in $$ \longrightarrow$$
 \vskip -.35in $$ d_{n} \uparrow  \hskip .5in \uparrow  d_{n} $$
\vskip -.35in $$\longrightarrow$$
\vskip -.35in $$d_{m}$$ where horizontal followed by vertical is in $L_k$ and vertical followed by horizontal is in $L_{k+1}.$
 For each square the separation between vertices indexed by $m$ and those by $n$ is 
at least $1$. Then the characteristic mapping classes $d_{m}$ and $d_{n}$ commute  since the loop  $d_n^{-1}d_m^{-1}d_nd_m$  fixes all the ${\rm a}_i.$
Consequently each square closes up as a vertical chain. 
All loops  $L_{k}$ and $-L_{k+1},$ for $k \neq 0, k \neq g-1,$ are therefore freely homotopic. In the exceptional case recall $L_{g}$ closes up with half the morphisms of $L_{g-1}$ so that $L_{g-1}$ is freely homotopic, with a change in sign, to twice $L_g.$ Free homotopy of a pair of loops implies the loops are homologous. 
\end{proof}

\begin{lem}\label{Lemma2} $L_0 +  L_1+ \cdots + L_g \sim_v 0. $
\end{lem}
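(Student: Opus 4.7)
The plan is to realize $L_0 + L_1 + \cdots + L_g$ as a vertical boundary in the groupoid $\Lambda_{2g-2}$, by constructing an explicit 2-chain $\xi \in (\mathcal{C}_{\mathbb{I}})_{2g-2,2}$ with $\partial^v \xi = L_0 + L_1 + \cdots + L_g$. The argument will proceed in the same spirit as Lemma~\ref{Lemma1}, exploiting the commutativity of distantly-indexed characteristic mapping classes.

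First I perform a combinatorial reorganization. Unlike the alternating transition cycle $t$, the sum $L_0 + L_1 + \cdots + L_g$ carries no sign cancellations: a direct count, tracking for each $m \in \{0, 1, \ldots, 2g\}$ which $L_k$'s contribute a term paired with the morphism $d_{m+1}$, shows
\begin{equation*}
L_0 + L_1 + \cdots + L_g \;=\; \sum_{0 \le j_1 < j_2 \le 2g} [A_0, \ldots, \hat{A}_{j_1}, \ldots, \hat{A}_{j_2}, \ldots, A_{2g}] \cdot (d_{j_1+1} + d_{j_2+1}),
\end{equation*}
where every 2-element subset $\{j_1, j_2\} \subset \{0, 1, \ldots, 2g\}$ appears exactly once, paired with the two morphisms indexed by its missing vertices.

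For each non-adjacent pair $\{j_1, j_2\}$, i.e., $|j_1 - j_2| > 1$ modulo $2g+1$, the mapping classes $d_{j_1+1}$ and $d_{j_2+1}$ commute in $\Gamma_g^1$ by the commutator-fixes-generators argument used in the proof of Lemma~\ref{Lemma1}. Writing $\sigma := [A_0, \ldots, \hat{A}_{j_1}, \ldots, \hat{A}_{j_2}, \ldots, A_{2g}]$, I consider the symmetrized composable 2-chain
\begin{equation*}
\xi_\sigma \;:=\; [\sigma] \cdot (d_{j_1+1}, d_{j_2+1}) \;+\; [\sigma] \cdot (d_{j_2+1}, d_{j_1+1}).
\end{equation*}
Its vertical boundary contains the desired piece $[\sigma] \cdot (d_{j_1+1} + d_{j_2+1})$ together with mixed terms $[d_{j_k+1}\sigma] \cdot d_{j_{3-k}+1}$ and $-[\sigma] \cdot d_{j_1+1} d_{j_2+1}$. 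These mixed contributions pair up and cancel across the total sum by the same square-closing mechanism that drives Lemma~\ref{Lemma1}: each mixed term matches a partner arising from a neighboring simplex whose relevant characteristic morphisms commute.

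The main obstacle will be the adjacent case $j_2 = j_1 + 1$ (modulo $2g+1$), corresponding to the $L_0$-contributions. Here $d_{j_1+1}$ and $d_{j_2+1}$ do not commute---$d_{j_1+1}$ fixes $\sigma$ while $d_{j_2+1}$ displaces it---so the naive symmetrization $\xi_\sigma$ fails to close up properly. I expect to handle these residual contributions by building a supplementary 2-chain using the long-word identities in \eqref{identities}, notably $d_{2g}d_{2g-1}\cdots d_0 = S^{-1}$ and $(d_{2g}\cdots d_0)^{4g} = 1_{\Gamma_g^1}$, which constrain the adjacent loops to lie in the image of $\partial^v$. Combining the symmetrized fillings from the non-adjacent case with this supplementary chain produces a 2-chain $\xi$ whose vertical boundary is $L_0 + L_1 + \cdots + L_g$, yielding the claimed relation $L_0 + L_1 + \cdots + L_g \sim_v 0$.
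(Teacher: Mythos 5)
Your opening reorganization is correct and is worth recording: writing each unordered pair as $(j_1,j_1+k)$ with $1\le k\le g$, the simplex missing $A_{j_1}$ and $A_{j_2}$ picks up $d_{j_2+1}$ from $L_k$ and $d_{j_1+1}$ from $L_{k-1}$, so the sum is indeed $\sum_{\{j_1,j_2\}}[\ldots]\cdot(d_{j_1+1}+d_{j_2+1})$, with $2g(2g+1)$ terms in all. The filling mechanism you then propose, however, does not work, for a sign reason. The vertical boundary of your symmetrized $2$-chain is
$$\partial^v\xi_\sigma=[\sigma]\cdot(d_{j_1+1}+d_{j_2+1})+[d_{j_2+1}\sigma]\cdot d_{j_1+1}+[d_{j_1+1}\sigma]\cdot d_{j_2+1}-2\,[\sigma]\cdot(d_{j_1+1}d_{j_2+1}),$$
and the two ``mixed'' terms are themselves terms of $L_0+\cdots+L_g$ (they are attached to the pairs $\{j_1,j_2+1\}$ and $\{j_1+1,j_2\}$), occurring with the same sign $+1$ as every other term of the sum; so summing over $\sigma$ they accumulate instead of cancelling. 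The commuting-square mechanism of Lemma \ref{Lemma1} kills \emph{differences}: $\partial^v\bigl([\sigma](f,h)-[\sigma](h,f)\bigr)=[h\sigma]f+[\sigma]h-[f\sigma]h-[\sigma]f$, with the product morphism cancelling. It is therefore only available when consecutive contributions carry opposite signs, as in the alternating decomposition (\ref{transitionSum}) of $t$; in the all-positive sum of the present lemma it yields nothing, and your symmetrization moreover introduces morphisms labelled by the products $d_{j_1+1}d_{j_2+1}$, which appear nowhere in the chain you are trying to bound.

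Second, the entire group-theoretic content of the lemma sits exactly where you stop. The adjacent pairs $j_2=j_1+1$ give the isotropy morphisms of $L_0$, and the closing-up of the whole configuration is deferred to a ``supplementary $2$-chain'' that you only ``expect'' the identities (\ref{identities}) to provide. The paper's argument is global rather than pair-by-pair: the $2g(2g+1)$ morphisms of the sum are precisely the morphisms traversed by a single composable circuit in $\Lambda_{2g-2}$ spelling out the word $d_{2g}d_{2g-1}\cdots d_0$ repeatedly, and the relation $(d_{2g}d_{2g-1}\cdots d_0)^{4g}=1_{\Gamma_g^1}$ of (\ref{identities}) forces that loop to be homotopically trivial; a composable loop with trivial holonomy is a vertical boundary. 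Until you either exhibit that circuit arrangement (an Eulerian-circuit statement about which morphism follows which) or construct the supplementary chain explicitly, the proof is incomplete, and the local fillings you do construct would in any case have to be discarded.
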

 \begin{proof} The $2g(2g+1)$ distinct morphisms which are the terms of the sum  $L_0 +  L_1+ \cdots + L_g$
 are in $1-1$ correspondence with the morphisms forming the composite
$(d_{2g}d_{2g-1} \cdots d_{0})^{4g}$, which is the identity in $\Gamma_g^1$ by (\ref{identities}).
That means there is a composite in the groupoid of all the morphisms whose square is the identity, so as a cycle the sum is homologous to $0.$ 
In fact it is represented by a loop which is trivial homotopically.
\end{proof}

\begin{prop}\label{VertHom}
$ t \sim_v (4g)(2g+1) [A_0,A_1,\ldots,A_{2g-2}]\cdot d_{2g}.$ 
\end{prop}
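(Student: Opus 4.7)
The plan is to combine the two preceding Lemmas with a cyclic symmetry argument to collapse the alternating sum $t = \sum_{k=0}^{g}(-1)^k L_k$ to a scalar multiple of the distinguished endomorphism $[A_0,\ldots,A_{2g-2}]\cdot d_{2g}$. First, substitute $L_0 \sim_v -\sum_{k=1}^{g}L_k$ from Lemma~\ref{Lemma2} into (\ref{transitionSum}) to get $t \sim_v \sum_{k=1}^{g}[(-1)^k-1]L_k = -2\sum_{k\text{ odd}} L_k$, and apply Lemma~\ref{Lemma1} to collect the odd-indexed terms. The relations $L_k\sim_v(-1)^{k-1}L_1$ for $1\le k\le g-1$, together with $2L_g\sim_v (-1)^{g+1}L_1$, absorb each odd-indexed summand into $L_1$; a short parity analysis in each of the cases $g$ even and $g$ odd yields $t\sim_v -g\,L_1$, or equivalently, via Lemma~\ref{Lemma2} once more, $t\sim_v 2g\,L_0$.

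Next, identify the collapsed cycle with a multiple of the single loop $[A_0,\ldots,A_{2g-2}]\cdot d_{2g}$. The expansion $L_0 = \sum_{i=0}^{2g}[A_0,\ldots,\hat{A}_i,\hat{A}_{i+1},\ldots,A_{2g}]\cdot d_{i+1}$ is a sum of $2g+1$ loops --- each $d_{i+1}$ acts trivially on the simplex obtained by deleting $A_i$ and $A_{i+1}$ --- based at the $T$-orbit of $[A_0,\ldots,A_{2g-2}]$. Using $Td_{i+1}T^{-1}=d_{i+2}$ (indices mod $2g+1$) and the fact that $T$ permutes these basepoints cyclically while preserving orientation of $(2g-1)$-simplices, each summand is a groupoid-conjugate of the distinguished summand $[A_0,\ldots,A_{2g-2}]\cdot d_{2g}$. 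Conjugate loops are vertically homologous in $\Lambda_{2g-2}$, so $L_0 \sim_v (2g+1)\cdot[A_0,\ldots,A_{2g-2}]\cdot d_{2g}$. The additional factor of $2$ needed to pass from $2g(2g+1)$ to the target $4g(2g+1)$ arises from the doubled traversal built into the composite $(d_{2g}\cdots d_0)^{4g}$ of Lemma~\ref{Lemma2}: each of the $2g(2g+1)$ distinct morphisms of $L_0+\cdots+L_g$ is visited twice in the composite, because $S^{2g}$ has order $2$ in $\Gamma_g^1$ but acts trivially on all inversives. Threading this doubling through the reduction of Step 1 upgrades $t\sim_v 2g L_0$ to $t\sim_v 4g L_0$, which combined with $L_0 \sim_v (2g+1)[A_0,\ldots,A_{2g-2}]\cdot d_{2g}$ yields the stated identity.

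The main obstacle lies in Step 3 of the program above: calibrating the precise integer coefficient. The conjugation argument identifying $L_0$ with a cyclic $T$-sum is transparent, and the algebraic collapse via Lemmas~\ref{Lemma1} and~\ref{Lemma2} is routine, but pinning down the multiplier $4g(2g+1)$ rather than the naive $2g(2g+1)$ requires careful tracking of the doubled composite $(d_{2g}\cdots d_0)^{4g}$ and of the orientation signs arising from cyclic permutations of $(2g-1)$-simplices under $T$.
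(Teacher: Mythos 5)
Your first two steps track the paper's argument closely. The reduction $t \sim_v 2g\,L_0$ is exactly what the paper obtains from Lemmas~\ref{Lemma1} and~\ref{Lemma2} (the paper routes through $L_g \sim_v (-1)^g L_0$ and $t \sim_v L_0 + (-1)^g(2g-1)L_g$, you through $t \sim_v -2\sum_{k\ \mathrm{odd}}L_k \sim_v -gL_1$; the algebra is equivalent and your parity bookkeeping checks out). Likewise, identifying each of the $2g+1$ isotropic summands of $L_0$ as a conjugate of $d_{2g}$, hence freely homotopic and therefore vertically homologous to $[A_0,\ldots,A_{2g-2}]\cdot d_{2g}$, is the paper's mechanism as well.

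The gap is Step 3, and it is not a calibration issue but a logical one. Having derived $t \sim_v 2g\,L_0$ as a formal consequence of Lemmas~\ref{Lemma1} and~\ref{Lemma2}, you cannot also ``upgrade'' this to $t \sim_v 4g\,L_0$: the two statements together would force $2g\,L_0 \sim_v 0$, i.e.\ $t$ would be trivial in $H_1(\Lambda)$, contradicting Corollary~\ref{Infinite} (and Proposition~\ref{infiniteOrder}), on which the rest of the paper depends. Worse, the mechanism you invoke undercuts your own Step 1: if each of the $2g(2g+1)$ morphisms of $L_0+\cdots+L_g$ were genuinely traversed twice in the identity composite $(d_{2g}\cdots d_0)^{4g}$, the correct conclusion of that argument would be $2(L_0+\cdots+L_g)\sim_v 0$, which is strictly weaker than Lemma~\ref{Lemma2} and would invalidate the substitution $L_0\sim_v -\sum_{k\geq 1}L_k$ you used to get $t\sim_v 2g\,L_0$ in the first place. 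The missing factor of $2$ cannot live in the reduction $t\rightsquigarrow L_0$; the paper locates it in the passage from $L_0$ to $[A_0,\ldots,A_{2g-2}]\cdot d_{2g}$, asserting that $L_0$ contributes $2(2g+1)$ isotropic morphisms (``each is counted twice''), i.e.\ $L_0 \sim_v 2(2g+1)\,[A_0,\ldots,A_{2g-2}]\cdot d_{2g}$, whereas your conjugation argument yields only $(2g+1)\,[A_0,\ldots,A_{2g-2}]\cdot d_{2g}$. As written, your argument therefore establishes only $t \sim_v 2g(2g+1)\,[A_0,\ldots,A_{2g-2}]\cdot d_{2g}$; to reach the stated coefficient you would need to justify the doubled count of the isotropic terms of $L_0$, which is the one genuinely delicate point of the proof and is not supplied by anything in your proposal.
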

\begin{proof} 
Combining the results of Lemmas \ref{Lemma1} and \ref{Lemma2} gives
\begin{equation}\label{eq10}
    L_g \sim_v (-1)^g\ L_0.
\end{equation}
Applying Lemma \ref{Lemma2} to the expression (7) for $t$ gives
\begin{equation}\label{eq11}
t  \sim_v L_0 +(-1)^g (2g-1)L_g.
\end{equation}
After eliminating $L_g$ from  (\ref{eq10}) and (\ref{eq11}) we obtain
$t \sim_v 2g\ L_0. $
 Now $L_0$ is a sum of $2g+1$ distinct terms but each is counted twice so there are $2(2g+1)$ isotropic terms in all. Isotropic morphisms  are freely homotopic for they 
are conjugate. In particular each is freely homotopic to $d_{2g}.$ This gives the desired formula for $t.$ 
 \end{proof}

\begin{example} We illustrate the constructions of this section with the genus $g=1$ case. The  mapping class group $\Gamma_1^1$ is $SL(2, \Bbb Z).$ A characteristic simplex, together with its associated characteristic  mapping classes, is given by the data
$${{\rm a}}_0 = (1,0) \hskip .5in {{\rm a}}_1 = (0,1) \hskip .5in {{\rm a}}_2 = (-1,-1)/{\sqrt 2}$$
$$S=\left( 
\begin{array}{ccc}
  0&   -1\\
  1&    0 \\
 
\end{array}
\right) \hskip .25in 
T= \left(
\begin{array}{ccc}
  0&   -1   \\
  1&  -1   \\
 \end{array}
\right)
\hskip .25in  d_0=\left(
\begin{array}{ccc}
  1&    0   \\
  1&   1   \\
 \end{array}
\right) \hskip .25in 
d_1 = \left(
\begin{array}{ccc}
 2&   -1   \\
  1&  0   \\
 \end{array}
\right)
\hskip .25in 
d_2= \left(
\begin{array}{ccc}
  1&   -1  \\
  0&   1  \\
 \end{array}
\right).$$

\begin{figure}[htbp] 
   \centering 
   \includegraphics[width=2.75in]{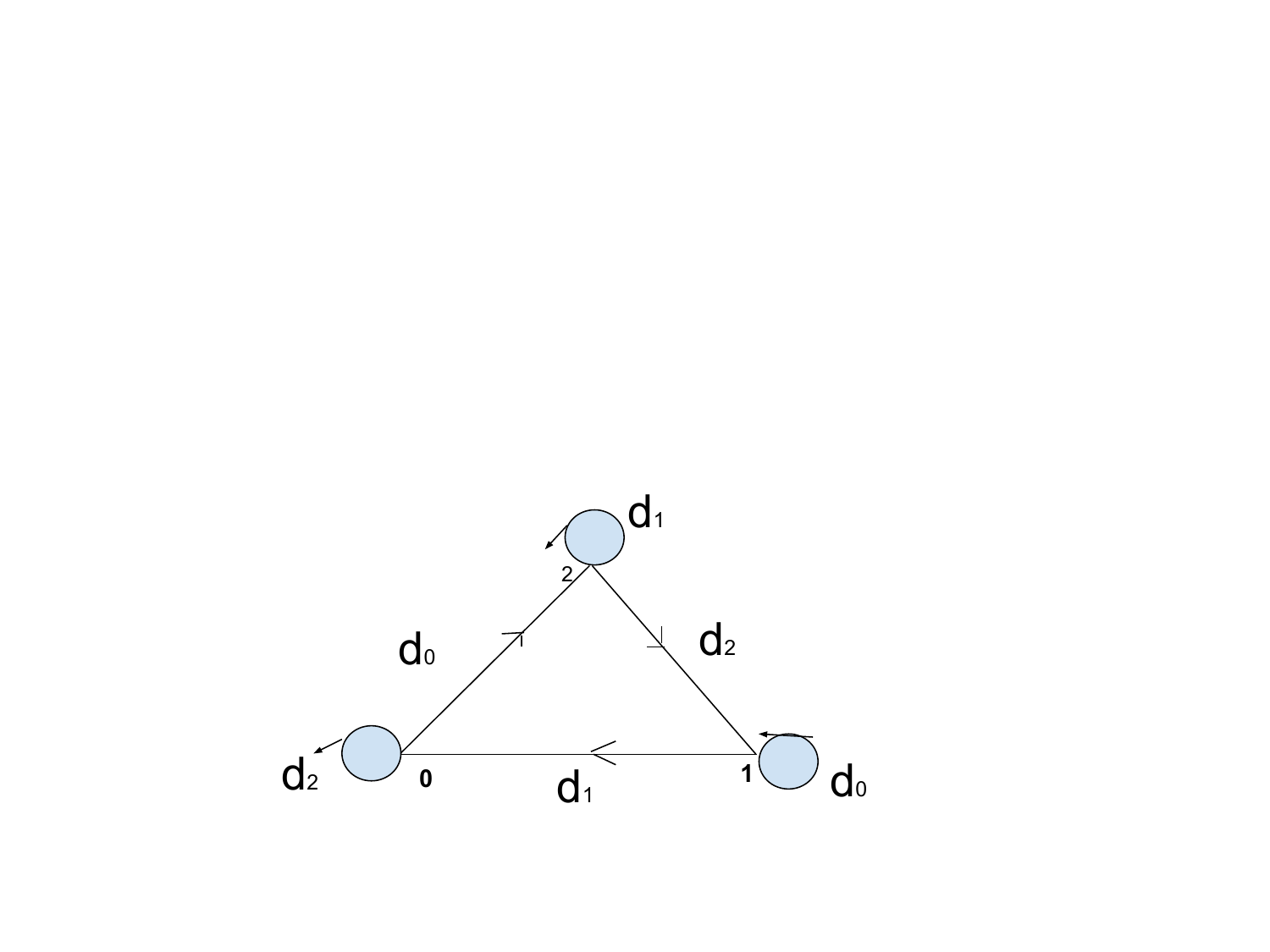} 
   \caption{\small The transition cycle $t$ for genus $g=1$}
   \label{tcycle}
\end{figure}

 We consider $SL(2, \Bbb Z)$ acting on inversives. Notice that in this case inversives are in one to one correspondence with lines in the plane that pass through the origin, and the action of $SL(2, \Bbb Z)$ factors through $PSL(2, \Bbb Z)$. 

The chain $c_1$ is 
$[{ A_1,A_2}]d_1 - [{ A_0,A_2}]d_2 + [{ A_0,A_1}]d_0.$

The transition cycle $t$ is
$([{ A_2}] - [{ A_1}])d_1 + ([{ A_0}] - [{A_2}])d_2 + ([{A_1}] - [{A_0}])d_0.$

The terms with a positive coefficient correspond to characteristic morphisms which fix an object. 
The three with a negative coefficient move  ${ A_1}$ to ${A_0},$ ${ A_2}$ to ${A_1},$ and ${ A_0}$ to ${ A_2}.$
The transition cycle is illustrated in figure \ref{tcycle}, with $A_i$ denoted by $i$, and where the loops stand for the characteristic morphisms which fix objects,
and arrows for the characteristic morphisms that move objects.
We can write the transition cycle $t$ as  $t = L_0  - L_1$ where 
$L_0 = [{ A_2}]d_1 + [{ A_0}]d_2  + [{A_1}]d_0$ and  $L_1 = [{ A_1}]d_1 + [{A_2}]d_2 + [{A_0}]d_0$.

The characteristic morphisms have explicit formulas in $SL(2, \Bbb Z).$ We can therefore compute directly the element in the isotropy group of $[A_0]$ determined by $t$.
$$(d_0^{-1}d_1 d_2^{-1} d_0 d_1^{-1} d_2)^2=\left(
\begin{array}{ccc}
  -1&    6   \\ 
   0&   -1   \\ 
 \end{array}\right)^2 = -12 d_2.$$

\end{example}

\end{subsection}

\section {Torsion and the order of $\E^g$}\label{Torsion}

In this section we show that $\E^g$ is a torsion class, and prove Theorem A.

\subsection{Holonomy of the transition cycle}\label{SecHolonomy}

Generally, consider a connected discrete groupoid $\Lambda$ and the isotropy group $\Lambda_{\rm b}$ of a base point ${\rm b}.$
For each object ${\rm y}$ of $\Lambda$ choose a morphism, $m_{\rm y}$  from ${\rm b}$ to ${\rm y}.$  The  {\it holonomy homomorphism} from groupoids to groups
$\Lambda  \to \Lambda_{\rm b}$,  defined by $f \mapsto  m_{t(f)}^{-1}\circ f  \circ m_{s(f)}$, 
is a natural transformation of categories. It determines  a homotopy equivalence on realizations 
$B\Lambda \to B\Lambda_{\rm b}$; 
as a result $B\Lambda$ is a $K(\Lambda_{\rm b},1).$ Moreover, the holonomy homomorphism induces an isomorphism on homology which is independent of the choice of morphisms $m_{\rm y}.$

 Let $\Lambda$ be the component of $\Lambda_{2g-2}$ containing the object ${[{A}]}:=[A_0,A_1,...,A_{2g-2}]$ and 
denote by $\Lambda_{[A]}$ the isotropy group of that object. The  holonomy homomorphism induces an isomorphism $H_1(\Lambda)\rightarrow H_1(\Lambda_{[A]}).$   

Notice that $\Lambda_{[A]}$ is a subgroup of $\Gamma_g^1$ that can also be identified as a subgroupoid of $\Lambda.$ The transition cycle $t$ is a $1$-cycle in $\Lambda$ and $[A_0,A_1,\ldots,A_{2g-2}]\cdot d_{2g}$ is a $1$-cycle in $\Lambda_{[A]} \subset \Lambda.$ By Proposition \ref{VertHom} the cycles $t$ and $2g(2g+1)[A_0,A_1,\ldots,A_{2g-2}]\cdot d_{2g}$ are homologous in $\Lambda.$ 

\begin{prop}\label{infiniteOrder} The characteristic mapping class $d_{2g}$  represents an element of infinite order in $H_1( \Lambda_{[A]}).$
\end{prop}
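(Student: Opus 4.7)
My strategy is to construct an explicit group homomorphism from (a finite-index subgroup of) $\Lambda_{[A]}$ into an infinite abelian group such that its value on $d_{2g}$ has infinite order.

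The first step is to make the action of $d_{2g}$ on $\pi_1(\Sigma_g,z)$ completely explicit via Dehn--Nielsen--Baer. Since $d_{2g}$ fixes $a_0,\ldots,a_{2g-2}$ and sends $a_{2g}$ to $a_{2g-1}^{-1}$, the fundamental relation $a_{2g}=(a_0 a_1 \cdots a_{2g-1})^{-1}$ forces
\[
d_{2g}(a_{2g-1}) = (a_0 a_1 \cdots a_{2g-2})^{-1}\, a_{2g-1}.
\]
Abelianizing to $H_1(\Sigma_g;\mathbb{Z})$ with basis $[a_0],\ldots,[a_{2g-1}]$, the induced action of $d_{2g}$ is the transvection that fixes the subspace $V:=\operatorname{span}_{\mathbb{Z}}([a_0],\ldots,[a_{2g-2}])$ pointwise and sends $[a_{2g-1}]$ to $[a_{2g-1}]-v$, where $v:=[a_0]+\cdots+[a_{2g-2}]$.

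Next, I would consider the finite-index subgroup $\Lambda^0\le \Lambda_{[A]}$ consisting of those $f$ whose induced action on $H_1(\Sigma_g;\mathbb{Z})$ fixes each $[a_i]$ for $0\le i\le 2g-2$ and acts as $+1$ on the quotient $H_1(\Sigma_g)/V$. Clearly $d_{2g}\in\Lambda^0$. Because $f_*$ fixes $V$ pointwise for every $f\in\Lambda^0$, the formula
\[
\phi(f) := f_*[a_{2g-1}] - [a_{2g-1}] \ \in\ V
\]
defines a genuine group homomorphism $\phi\colon \Lambda^0\to V\cong\mathbb{Z}^{2g-1}$: for $f,g\in\Lambda^0$ one computes directly $\phi(fg) = f_*([a_{2g-1}]+\phi(g)) - [a_{2g-1}] = \phi(f)+f_*\phi(g) = \phi(f)+\phi(g)$. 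Since $\phi(d_{2g}) = -v\neq 0$, the class $[d_{2g}]$ has infinite order in $H_1(\Lambda^0;\mathbb{Z})$.

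Finally, to lift the infinite-order conclusion from $H_1(\Lambda^0)$ to $H_1(\Lambda_{[A]})$, I would use the normal extension $1\to\Lambda^0\to\Lambda_{[A]}\to Q\to 1$ with $Q$ finite. The five-term inflation--restriction sequence of this extension furnishes a rational injection $H_1(\Lambda^0;\mathbb{Q})^Q\hookrightarrow H_1(\Lambda_{[A]};\mathbb{Q})$, and I would verify that $[d_{2g}]$ is $Q$-invariant by checking that conjugation by any element of $\Lambda_{[A]}$ carries $d_{2g}$ to another element of $\Lambda^0$ whose $\phi$-value still equals $-v\in V$.

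\textbf{Main obstacle.} The delicate step is the final transfer. The potential pitfall is that an involution in $\Lambda_{[A]}\setminus\Lambda^0$ (of hyperelliptic type, acting on $\pi_1$ by $a_i\mapsto a_i^{-1}$) could conjugate $d_{2g}$ to an element representing $-[d_{2g}]$ in $H_1(\Lambda^0)$, forcing $[d_{2g}]$ to be at most $2$-torsion in $H_1(\Lambda_{[A]};\mathbb{Q})$. Ruling this out requires carefully tracking the effect of such conjugations at the level of $\pi_1$-automorphisms and confirming that the $Q$-action on $[d_{2g}]$ in the rational abelianization $H_1(\Lambda^0;\mathbb{Q})$ is in fact trivial.
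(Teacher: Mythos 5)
Your first two steps are correct and in fact isolate the same invariant the paper uses: the formula $d_{2g}(a_{2g-1})=(a_0\cdots a_{2g-2})^{-1}a_{2g-1}$ is right, your $\phi(f)=f_*[a_{2g-1}]-[a_{2g-1}]$ is precisely the last column of the matrix of $f_*$ in the basis $[a_0],\dots,[a_{2g-1}]$, and $\phi$ is a genuine homomorphism on $\Lambda^0$ with $\phi(d_{2g})=-v\neq 0$. This legitimately proves that $d_{2g}$ has infinite order in $H_1(\Lambda^0)$.

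The gap is the final transfer, which you flag but do not close --- and the statement to be proved concerns $H_1(\Lambda_{[A]})$, not $H_1(\Lambda^0)$. Rationally $H_1(\Lambda_{[A]};\mathbb{Q})\cong H_1(\Lambda^0;\mathbb{Q})_Q$, and a direct computation gives $\phi(qfq^{-1})=\epsilon_q\, q_*\phi(f)$, where $\epsilon_q=\pm1$ is the action of $q$ on $H_1(\Sigma_g)/V$ and $q_*$ acts on $V$ by a signed permutation. Your feared hyperelliptic-type element ($-I$ on $V$, $+1$ on the quotient) is in fact excluded by $\det f_*=1$, since it would have determinant $(-1)^{2g-1}=-1$; but an element acting on $V$ by a signed permutation with an even number of $-1$'s and by $+1$ on the quotient is not excluded by determinant alone, and such elements could make the $Q$-average of $\phi(d_{2g})$ vanish. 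What is needed, and what the paper's proof supplies, is the structural fact that every $f\in\Lambda_{[A]}$ acts on $H_1(\Sigma_g)$ by a matrix $\pm\bigl(\begin{smallmatrix}P & n\\ 0 & 1\end{smallmatrix}\bigr)$ with $P$ an honest (even) permutation matrix and a single \emph{global} sign, which the authors deduce from the requirement that $f$ preserve the cyclic order of the points $a_i^{\pm1}$ on $\partial\mathbb{D}$. Granting this, $\epsilon_q$ equals the sign of the block, so $\phi(qd_{2g}q^{-1})=\epsilon_q^2(-v)=-v$, the class is $Q$-invariant, and your transfer goes through. Better still, with that fact in hand the coordinate sum of your $\phi$, i.e.\ $f\mapsto\sum_i n_i$, is already a homomorphism from all of $\Lambda_{[A]}$ to $\mathbb{Z}$ sending $d_{2g}$ to $-(2g-1)$; this is exactly the paper's argument (via the quotient $\mathcal{K}/\mathcal{T}$), and it renders the finite-index detour and the five-term sequence unnecessary. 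As written, your proposal stops exactly where the real content begins.
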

\begin{proof} 
Consider $H_1(\Sigma_g) \cong \Bbb Z^{2g}$ generated by ${\rm a}_0,\ldots,{\rm a}_{2g-1}$.  The action of the mapping class group $\Gamma_g^1$ on $H_1(\Sigma_g)$ gives, with respect to this generating set, a group homomorphism  $\psi: \Gamma_g^1\rightarrow SL(2g, \Bbb Z)$
defined by $f\mapsto f_*$. 

If  $f\in\Lambda_{[A]}$, then $f$ is a mapping class that permutes the inversives $A_0,A_1,\ldots, A_{2g-2}$ preserving the orientation class $[A_0,A_1,\ldots, A_{2g-2}]$. Moreover, the mapping class $f$ must preserve the cyclic ordering (\ref{order1}).   Then $f_*$ acts on the generators ${\rm a}_0,\ldots,{\rm a}_{2g-2}$ by $\pm P$, where $P$ is a $(2g-1)\times(2g-1)$ permutation matrix with $\det P=1$. 
The image $K= \psi(f)$ is a matrix of the form  $K = QT$ below,
where $I$ is the identity $(2g-1)\times(2g-1)$ matrix and $n_i\in\mathbb{Z}$ for $0\leq i\leq 2g-2$.

$$K =  \pm\begin{pmatrix}
&  &  {\cdots}& & n_0 \cr
&  &  {\cdots} & & n_1 \cr
{} &{} & P  & {}& \vdots\cr
&  &{\cdots}  & & n_{2g-2} \cr
0& & \cdots & 0&   1\cr
\end{pmatrix}
\hskip .025in
=
\hskip .025in
\begin{pmatrix}
&  &  {\cdots}& &  n_0 \cr
&  &  {\cdots} & &  n_1 \cr
{} &{} & I  & {}& \vdots\cr
&  &{\cdots}   & &  n_{2g-2} \cr
0& & \cdots & 0&  1\cr
\end{pmatrix}
\begin{pmatrix}
&  &  {\cdots}& & 0 \cr
&  &  {\cdots} & & 0 \cr
{} &{} & \pm P  & {}& \vdots\cr
&  &{\cdots}  & & 0 \cr
0& & \cdots & 0&  \pm 1\cr
\end{pmatrix}
$$

Matrices of the form $K$ and $Q$ are closed under products and inverses so they determine subgroups,  ${\mathcal{K}}$  and ${\mathcal{Q}},$ of  $SL(2g,\mathbb{Z}).$ The subset ${\mathcal{T}}$ of  ${\mathcal{K}}$ which consists of matrices of the form $K$ with $n_0 + \cdots +n_{2g-2} = 0$ is a normal subgroup of ${\mathcal{K}}$.

We claim that the quotient group ${\mathcal{K}}/{\mathcal{T}}$ is abelian.  Indeed, consider the composite ${\mathcal{Q}} \hookrightarrow {\mathcal{K}} \twoheadrightarrow {\mathcal{K}}/{\mathcal{T}}. $ The image of this homomorphism is the subgroup of cosets represented by $Q \in {\mathcal{Q}}.$
The product decomposition $K=QT$ shows that every coset in ${\mathcal{K}}/{\mathcal{T}}$ has a representative in ${\mathcal{Q}}$. Therefore,  the  homomorphism ${\mathcal{Q}} \rightarrow {\mathcal{K}}/{\mathcal{T}}$ is onto. It follows that ${\mathcal{K}}/{\mathcal{T}}$ is isomorphic to a quotient of ${\mathcal{Q}}$ which must be abelian since ${\mathcal{Q}}$ is abelian.

Hence, the  composite 
$\Lambda_{[A]}\xrightarrow{\psi} {\mathcal{K}} \twoheadrightarrow {\mathcal{K}}/{\mathcal{T}}$
factors through the abelianization $H_1(\Lambda_{[A]})$. Moreover, the  mapping class $d_{2g}^{-1}$ maps under this composite to an element in ${\mathcal{K}}/{\mathcal{T}}$ of infinite order, namely the coset represented by the matrix $Q$  in ${\mathcal{Q}}$ with  $n_i =1$ for $i=0,\ldots, 2g-2$. So the characteristic mapping class $d_{2g}$ has infinite order in $H_1( \Lambda_{[A]}).$ 
\end{proof}

\begin{cor}\label{Infinite}
The transition cycle $t$ is a $1$-cycle in the groupoid $\Lambda$ that represents a class of infinite order in $H_1( \Lambda).$ 
\end{cor}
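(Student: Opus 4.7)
The plan is to combine Propositions \ref{VertHom} and \ref{infiniteOrder} through the holonomy isomorphism $h_*\colon H_1(\Lambda)\xrightarrow{\cong} H_1(\Lambda_{[A]})$ from the start of Section \ref{SecHolonomy}. First I would note that Proposition \ref{VertHom} already places $t$ in the component $\Lambda$, since it produces an explicit vertical $2$-chain in $\Lambda_{2g-2}$ connecting $t$ to a multiple of a cycle based at $[A]=[A_0,\ldots,A_{2g-2}]$; in particular
\[
[t] \;=\; 4g(2g+1)\,\bigl[[A_0,\ldots,A_{2g-2}]\cdot d_{2g}\bigr] \quad \text{in } H_1(\Lambda).
\]

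Next I would transport this identity under holonomy. Because the $1$-cycle $[A_0,\ldots,A_{2g-2}]\cdot d_{2g}$ is already based at $[A]$, one can take the connecting morphism $m_{[A]}$ to be the identity, and the holonomy image of this cycle is simply the class $[d_{2g}]$ viewed in $H_1(\Lambda_{[A]})$. Hence
\[
h_*[t] \;=\; 4g(2g+1)\,[d_{2g}] \;\in\; H_1(\Lambda_{[A]}).
\]

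Finally, Proposition \ref{infiniteOrder} says $[d_{2g}]$ has infinite order in $H_1(\Lambda_{[A]})$, so the cyclic subgroup it generates is torsion-free, and the nonzero integer multiple $4g(2g+1)[d_{2g}]$ therefore also has infinite order. Since $h_*$ is an isomorphism, $[t]$ itself has infinite order in $H_1(\Lambda)$, which is the claim.

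There is no genuine obstacle at this step: the real work has already been done in Proposition \ref{VertHom} (the combinatorial identification of $[t]$ via the loops $L_0,\ldots,L_g$ and the relations from \eqref{identities}) and in Proposition \ref{infiniteOrder} (ruling out torsion for $d_{2g}$ through the $\psi\colon\Gamma_g^1\to SL(2g,\mathbb{Z})$ representation and the abelian quotient $\mathcal{K}/\mathcal{T}$). The corollary is just the assembly of these two inputs.
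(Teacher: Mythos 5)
Your proof is correct and follows essentially the same route as the paper: combine Proposition \ref{VertHom} (identifying $[t]$ with $4g(2g+1)$ times the class of $[A_0,\ldots,A_{2g-2}]\cdot d_{2g}$), the holonomy isomorphism $H_1(\Lambda)\cong H_1(\Lambda_{[A]})$, and Proposition \ref{infiniteOrder} (infinite order of $d_{2g}$ in $H_1(\Lambda_{[A]})$). No differences worth noting.
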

\begin{proof}
 By Proposition \ref{infiniteOrder}, the characteristic mapping class $d_{2g}$ represents a class of infinite order in $H_1( \Lambda_{[A]})$, then so does $4g(2g+1)d_{2g}.$
 From Proposition \ref{VertHom} the transition cycle $t$ and the $1$-cycle $4g(2g+1)[A_0,A_1,\ldots,A_{2g-2}]\cdot d_{2g}$ represent the same class in $H_1( \Lambda).$    Since the holonomy homomorphism $\Lambda\rightarrow \Lambda_{[A]}$ induces an isomophims in homology, the corollary follows.
\end{proof}

 We are now prepared to show that the transition cycle $t$ gives an obstruction to the existence of a cycle in the total complex that lifts $o_g$.

\begin{cor}\label{NOLIFT}
  Neither the orbit chain $(o_g/2)$, nor any multiple of it, lifts to a cycle in the total complex of the inversive action. That is,  there does not exist a cycle $c$ in the total complex such that ${\rm q}(c)=k(o_g/2)$, for any $k\in\mathbb{Z}-\{0\}$. 
\end{cor}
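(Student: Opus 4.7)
The plan is to argue by contradiction using the lifting obstruction analysis of the preceding subsection, together with the infinite-order result of Corollary \ref{Infinite}. Suppose that $c = c_0 + c_1 + \cdots + c_{2g} \in T\mathcal{C}_{2g}$ is a cycle with ${\rm q}(c) = k(o_g/2)$ for some $k \in \Z \setminus \{0\}$. The aim is to force the class $k[t]^v$ into a subgroup of $H_1^v\mathcal{C}_{2g-2,*}$ that it cannot in fact belong to.

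First I would \emph{normalize} the cycle. Since $[c_0]^v = k(o_g/2) = [k(e_g/2)]^v$ in $H_0^v\mathcal{C}_{2g,*}$, there is a chain $d \in \mathcal{C}_{2g,1}$ with $c_0 - k(e_g/2) = \partial^v d$. Replacing $c$ by $c - \partial d$ produces a cycle whose $(2g,0)$-component is exactly $k(e_g/2)$ and which has the same image under ${\rm q}$, since ${\rm q}$ vanishes on chains of positive vertical degree. Reading off $\partial c = 0$ in bidegrees $(2g-1,0)$ and $(2g-2,1)$ yields $\partial^v c_1 = -k\,\partial^h(e_g/2)$ and $\partial^v c_2 = -\partial^h c_1$, so in particular $[\partial^h c_1]^v = 0$ in $H_1^v\mathcal{C}_{2g-2,*}$.

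Next I would extract the obstruction. Let $\bar c_1$ be the chain produced in Proposition \ref{CYCLE}, for which $\partial^v\bar c_1 = \partial^h e_g = 2\partial^h(e_g/2)$. Then $w := 2c_1 + k\bar c_1 \in \mathcal{C}_{2g-1,1}$ satisfies $\partial^v w = 0$ and is therefore a vertical $1$-cycle, with horizontal boundary $\partial^h w = 2\partial^h c_1 + k\partial^h \bar c_1 = 2\partial^h c_1 + kt$. Passing to vertical homology classes and using $[\partial^h c_1]^v = 0$ yields $[\partial^h w]^v = k[t]^v$. Consequently $k[t]^v$ lies in the image of the differential $d^1 \colon H_1^v\mathcal{C}_{2g-1,*} \to H_1^v\mathcal{C}_{2g-2,*}$ induced by $\partial^h$ on vertical homology.

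The contradiction must come from showing that $k[t]^v \notin \operatorname{im}(d^1)$ for $k \neq 0$. I would push this through the composition $H_1(\Lambda) \xrightarrow{\sim} H_1(\Lambda_{[A]}) \to \mathcal{K}/\mathcal{T}$ from Proposition \ref{infiniteOrder}: by Proposition \ref{VertHom}, the image of $k[t]^v$ under this composition is $4g(2g+1)k$ times the class of $\psi(d_{2g})$ in $\mathcal{K}/\mathcal{T}$, which has infinite order and so is nonzero for $k \neq 0$. The remaining ingredient --- verifying that the $\Lambda$-component of $\operatorname{im}(d^1)$ maps to zero in $\mathcal{K}/\mathcal{T}$, so that no vertical $1$-cycle in $\mathcal{C}_{2g-1,1}$ has a horizontal boundary representing $k[t]^v$ in $H_1(\Lambda)$ --- is the step I expect to be the main obstacle, as it requires a careful analysis of how the Nielsen-type $d^1$-differential interacts with the linearization $\psi\colon \Gamma_g^1 \to \SL(2g,\Z)$.
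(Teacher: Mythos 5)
Your setup is sound and follows the paper's strategy: the obstruction to lifting is the transition cycle $t$, and the external input is Corollary \ref{Infinite}. The normalization of $c_0$, the observation that $\partial^h c_1$ is a vertical boundary, the formation of the vertical $1$-cycle $w=2c_1+k\bar c_1$, and the identity $[\partial^h w]^v=k[t]^v$ are all correct; this also treats odd and even $k$ uniformly, whereas the paper disposes of odd $k$ separately by a parity count of orbits in $H_0^v(\mathcal{C}_{2g-1,*})$. The problem is the step you yourself flag: you have reduced the corollary to the assertion that $k[t]^v\notin\operatorname{im}(d^1)$ for $k\neq 0$, and you do not prove it. This is a genuine gap, and it is strictly stronger than what the paper's machinery delivers. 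Corollary \ref{Infinite} only says that $k[t]^v\neq 0$ in $H_1(\Lambda_{2g-2})$; it says nothing about the image of $d^1\colon H_1^v\mathcal{C}_{2g-1,*}\to H_1^v\mathcal{C}_{2g-2,*}$, and your proposed route would require showing that the $\Lambda$-component of every class $[\partial^h z]^v$, for $z$ a vertical $1$-cycle in $\mathcal{C}_{2g-1,1}$, dies under $H_1(\Lambda)\cong H_1(\Lambda_{[A]})\to\mathcal{K}/\mathcal{T}$ --- a statement that is analyzed nowhere in the paper and that you do not attempt.

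For comparison, the paper's own proof avoids confronting $\operatorname{im}(d^1)$ by fixing the specific chain $c_1=m\bar c_1$ from Proposition \ref{CYCLE}: a further extension would force $mt=-\partial^v c_2$, i.e.\ $m[t]^v=0$ in $H_1(\Lambda_{2g-2})$, contradicting Corollary \ref{Infinite}. In doing so it does not discuss the freedom of replacing $c_1$ by $c_1+z$ for a vertical $1$-cycle $z$, which changes $\partial^h c_1$ by $\partial^h z$ --- exactly the indeterminacy you correctly isolate. So you have identified a real subtlety that the paper passes over, but you have not resolved it, and as written your argument does not close. To finish you must either prove the non-membership $k[t]^v\notin\operatorname{im}(d^1)$, or restructure the argument so that only $k[t]^v\neq 0$ is needed, as the paper does.
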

\begin{proof} We attempt to lift the orbit chain $k(o_g/2)$ to a cycle in the total complex, as explained in the introduction of Section \ref{SecTranCycle}, starting with the chain $c_0=k(e_g/2)\in \mathcal{C}_{2g,0}$. 

If $k$ is odd, then there is no chain $c_1\in {\mathcal{C}}_{2g-1,1}$ such that ${\rm q}(c_0+c_1)=k(o_g/2)$, for its chain boundary is a sum of an odd number of distinct faces so orbits cannot cancel. 

If $k=2m$ is even, we can continue the lifting process of by $k(o_g/2)$ taking 
$$c_1=  m\sum_{ i=0}^{2g}(-1)^{i}[ {A_0,...,\hat {A}_{i}, A_{i+1},...,A_{2g}}]d_{i+1}\in {\mathcal{C}}_{2g-1,1},$$  where $i \equiv n {\rm \hskip .025in mod}(2g+1)$. 
If a further extension were to exist, there would be a chain $c_2$ satisfying $-\partial^v c_2 =  \partial^h c_1 = mt.$ Since $c_2$ is a $2$-chain in the groupoid $\Lambda_{2g-2}$, this implies that a non-trivial multiple $t$ bounds vertically in $\Lambda_{2g-2}.$ However, by Corollary \ref{Infinite}, the transition cycle $t$ has infinite order  in $H_1(\Lambda)$, and hence in $H_1(\Lambda_{2g-2})$. Therefore, no such chain $c_2$ can exist, the lifting terminates, and there is no cycle $c$ such that ${\rm q}(c)=k(o_g/2)$. 

\end{proof}

\begin{subsection}{The cohomology class $\E^g$ in $Ext$}\label{SecTorsion}

We use our approach to directly prove that $\E^g$ is a torsion class. This follows from results on the rational cohomology of moduli spaces obtained by algebro-geometric techniques \cite{Io02, Lo95}, but Theorem \ref{Theo:TorsionClass} provides a self-contained, intrinsic proof. 

Consider the homomorphism $\beta: H^{2g}(T{\mathcal{C}}; \Bbb Z)\rightarrow Hom(H_{2g}(T{\mathcal{C}}), \Bbb Z)$ of the Universal Coefficient Theorem (\ref{UCT}).

\begin{theorem}\label{Theo:TorsionClass} The homomorphism $\beta$ maps $\E^g$ to $0,$ hence $\E^g$ is a  torsion class.
\end{theorem}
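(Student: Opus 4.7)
The strategy is to leverage the Universal Coefficient Theorem: $\E^g$ is torsion if and only if $\beta(\E^g)=0$. Since Proposition \ref{InvCocycle} tells us $(e_g/2)^*_{\Bbb I}$ represents $2\E^g$ in $T\mathcal{C}_{\Bbb I}$ and $\Hom(H_{2g}(T\mathcal{C}_{\Bbb I}),\mathbb{Z})$ is torsion-free, $\beta(\E^g)=0$ is equivalent to the statement that $\langle(e_g/2)^*_{\Bbb I},c\rangle=0$ for every $2g$-cycle $c$ of $T\mathcal{C}_{\Bbb I}$. The plan is to assume, toward a contradiction, that this pairing equals some nonzero value on some cycle $c$ and then reduce the situation to a lifting problem ruled out by Corollary \ref{NOLIFT}.

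First, I would use ${\rm q}_{\Bbb I}^*$-adjunction to rewrite
\begin{equation*}
\langle(e_g/2)^*_{\Bbb I},c\rangle = \langle(o_g/2)^*_{\Bbb I},{\rm q}_{\Bbb I}(c)\rangle = \langle(o_g/2)^*_{\Bbb I},[c_0]^v\rangle,
\end{equation*}
where $c=c_0+c_1+\cdots+c_{2g}$ with $c_i\in(\mathcal{C}_{\Bbb I})_{2g-i,i}$ and $[c_0]^v$ is necessarily a cycle in the orbit complex $H_0^v\mathcal{C}_{\Bbb I}$. Assuming this pairing equals $2m\neq 0$ (the value is automatically even, since it coincides with $2\E^g([c])$), the goal is to produce a cycle contradicting Corollary \ref{NOLIFT}.

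Second, I would modify $c$ by a boundary $\partial(\eta_0+\zeta)$ with $\eta_0\in(\mathcal{C}_{\Bbb I})_{2g+1,0}$ and $\zeta\in(\mathcal{C}_{\Bbb I})_{2g,1}$. Such a modification keeps $c$ a cycle and leaves $\langle(e_g/2)^*_{\Bbb I},c\rangle$ unchanged, but replaces $c_0$ by $c_0-\partial^h\eta_0-\partial^v\zeta$. The objective is to choose $\eta_0,\zeta$ so that the modified $(2g,0)$-component equals exactly $m(e_g)_{\Bbb I}$, yielding a cycle $\tilde c$ with ${\rm q}_{\Bbb I}(\tilde c)=m(o_g)_{\Bbb I}=2m(o_g/2)_{\Bbb I}$. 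This directly contradicts Corollary \ref{NOLIFT}, applied with $k=2m\neq 0$, and forces $m=0$.

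The main obstacle is the existence of the modifying chains $\eta_0$ and $\zeta$; it reduces to showing that $[c_0]^v$ and $m(o_g)_{\Bbb I}$ represent the same homology class in $H_{2g}(H_0^v\mathcal{C}_{\Bbb I})$. I would establish this identification by tracing both cycles through the commutative diagram (\ref{diagram}) into the universal orbit complex $H_0^v\mathcal{C}$: both push forward under $(\rho\circ\bar j)_*$ to the common chain $m\mathbf{o}_g$ in $H_0^v\mathcal{C}_{2g,*}\cong\mathbb{Z}$, and Theorem \ref{HOMOUNiVERSAL} combined with the homology isomorphism $\bar j_*$ from Proposition \ref{INVERSE} provides the rigidity needed to conclude homological equality in $H_{2g}(H_0^v\mathcal{C}_{\Bbb I})$. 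Once this reduction is in place, the contradiction comes for free from Corollary \ref{NOLIFT}, whose underlying obstruction is the transition cycle $t$ and its infinite order in $H_1(\Lambda_{2g-2})$ — established via the abelianization argument of Proposition \ref{infiniteOrder}.
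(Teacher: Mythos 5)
Your overall strategy coincides with the paper's: represent $2\E^g$ by the cocycle $(e_g/2)^*_{\mathbb{I}}$, observe that $\beta(2\E^g)$ is computed by evaluating this cocycle on $2g$-cycles of the total complex, and derive a contradiction from Corollary \ref{NOLIFT}. Your first two steps are sound: the adjunction $\langle(e_g/2)^*_{\mathbb{I}},c\rangle=\langle(o_g/2)^*_{\mathbb{I}},[c_0]^v\rangle$ is correct, and \emph{if} you could modify $c$ by a boundary so that its $(2g,0)$-component becomes exactly $m(e_g)_{\mathbb{I}}$, then Corollary \ref{NOLIFT} would indeed finish the argument.

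The gap is in your final paragraph. Producing the modifying chains $\eta_0,\zeta$ requires that $[c_0]^v$ and $m(o_g)_{\mathbb{I}}$ be homologous in $H_{2g}(H_0^v\mathcal{C}_{\mathbb{I}})$, but the only information available about $[c_0]^v$ is that it is a cycle in the orbit complex whose evaluation against $(o_g/2)^*_{\mathbb{I}}$ equals $2m$. That determines the class only up to the kernel of the evaluation map $(o_g/2)^*_{\mathbb{I}}:H_{2g}(H_0^v\mathcal{C}_{\mathbb{I}})\to\mathbb{Z}$, and nothing you cite controls that kernel. Theorem \ref{HOMOUNiVERSAL} computes the homology of the orbit complex of the \emph{full} group $\Homeo\S1$, whose action is transitive on cyclically ordered tuples; the orbit complex of the $\Gamma_g^1$-action has vastly more orbits, its homology is computed nowhere in the paper, and the pushforward to the universal orbit complex has no reason to be injective on $H_{2g}$. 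Proposition \ref{INVERSE} only identifies the Nielsen and inversive orbit complexes with one another, so it supplies no additional rigidity. Concretely, $[c_0]^v$ could differ from $m(o_g)_{\mathbb{I}}$ by a nontrivial homology class annihilated by $(o_g/2)^*_{\mathbb{I}}$, in which case no boundary modification normalizes $c_0$ and Corollary \ref{NOLIFT}, as you invoke it, does not apply. The paper's own proof avoids this reduction: it splits the chain group $T\mathcal{C}_{2g}=C\oplus C'$ with $C$ generated by $(e_g/2)$ and $C'=\ker(e_g/2)^*$, writes the hypothetical cycle as $n(e_g/2)+z'$ with $z'\in C'$, and reads Corollary \ref{NOLIFT} as saying that no nonzero multiple of $(e_g/2)$ extends to a cycle by adding terms of $C'$. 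To repair your version you would either need to adopt that formulation or independently compute enough of $H_{2g}(H_0^v\mathcal{C}_{\mathbb{I}})$ to justify the homological identification.
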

\begin{proof} By Proposition \ref{InvCocycle} the class $2\E^g$ is represented by the cocycle  $(e_g/2)^*$ satisfying $\langle (e_g/2)^*, (e_g/2) \rangle = 1.$
Since $(e_g/2)^*$ is a homomorphism from $T\mathcal{C}_{2g}$ onto $\Bbb Z$, 
it splits the $2g$-chains $T\mathcal{C}_{2g}$ into a summand $C$ generated by the chain $(e_g/2)$ and a summand $C'$ on which $(e_g/2)^*$ is trivial. The homomorphism $\beta(2\E^g)\in Hom(H_{2g}(T{\mathcal{C}}), \Bbb Z)$ is represented by the restriction of the cocycle $(e_g/2)^*$ to $2g$-cycles $\mathcal{K}_{2g}$ in the total complex $T\mathcal{C}$. 

If $\beta(2\E^g)$ were non-trivial in $Hom(H_{2p}(T{\mathcal{C}}), \Bbb Z)$ there would exist a $2g$-cycle $z$ with the property that $(e_g/2)^*(z)\neq 0$. Therefore,  when the cycle $z$ is written in terms of a basis determined by the splitting of $T\mathcal{C}_{2g}$ as $C\oplus C'$, it must have a non-trivial term in $C.$ However, by Corollary \ref{NOLIFT}, neither the chain $(e_g/2)$ nor any non-zero multiple of it, can be extended, by adding terms in $C',$ to a cycle in the total complex.  Consequently $z$ must be an element of the subgroup $C' $ in which case we obtain $(e_g/2)^*$ is the zero homomorphism when restricted to $\mathcal{K}_{2g}$. We conclude that $\beta$ maps $2\E^g$ to $0$, which implies that $\beta$ maps $\E^g$  to $0$ as well.
\end{proof}

Now let us recall the definition of 
$\alpha: Ext(H_n( T{\mathcal{C}}), \Bbb Z)\rightarrow H^{n+1}( T{\mathcal{C}};\mathbb{Z})$
in the Universal Coefficient Theorem (\ref{UCT}). Let ${\mathcal{K}}_{n}$ denote the $n$-cycles and ${\mathcal{B}}_{n}$ the $n$-boundaries of the total complex $T\mathcal{C}$.
The group  $Ext(H_n( T{\mathcal{C}}), \Bbb Z)$ is isomorphic to  $Hom({\mathcal{B}}_{n}, \Bbb Z)/\iota(Hom({\mathcal{K}}_{n}, \Bbb Z))$, where $\iota$ is the homomorphism which restricts an element of $Hom({\mathcal{K}}_{n}, \Bbb Z)$ to ${\mathcal{B}}_{n}.$  Given $f \in Hom({\mathcal{B}}_{n}, \Bbb Z)$ representing a class in $Ext(H_{n}(T{\mathcal{C}}), \Bbb Z),$  the element $\alpha(f) \in 
Hom(( T{\mathcal{C}})_{n+1}, \Bbb Z)$ representing a class in $H^{n+1}( T{\mathcal{C}}; \Bbb Z)$ is defined by $$\langle \alpha(f), c \rangle = \langle f, \partial c \rangle\text{,\ \  for any\  }c\in (T{\mathcal{C}})_{n+1}.$$

Consider the chains $c_0=e_g$, 
 $c_1$ and the transition cycle $t=\partial(c_0+c_1)$ as in Section \ref{SecTrans}, and let $(b/2):=\partial(e_g/2)$.
 
\begin{prop}\label{ExtRep-1} There  is  a well-defined homomorphism $\chi \in Hom({\mathcal{B}}_{2g-1}, \Bbb Z)$ which represents $\E^g$ in
 $Ext(H_{2g-1}(T{\mathcal{C})}, \Bbb Z)$ and satisfies $\langle \chi, (b/2) \rangle = 1.$  
 \end{prop}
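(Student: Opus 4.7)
By Theorem \ref{Theo:TorsionClass} we have $\beta(\E^g) = 0$, so $\E^g$ lies in the image of the injection $\alpha : Ext(H_{2g-1}(T\mathcal{C}),\mathbb{Z}) \hookrightarrow H^{2g}(T\mathcal{C};\mathbb{Z})$. Hence there is a unique pre-image $[\chi]$ of $\E^g$ in $Ext(H_{2g-1}(T\mathcal{C}),\mathbb{Z})$, and the task is to exhibit a representative $\chi \in Hom(\mathcal{B}_{2g-1},\mathbb{Z})$ satisfying the normalization $\langle \chi, (b/2) \rangle = 1$. The plan is to manufacture $\chi$ from a carefully chosen cocycle representative of $\E^g$.

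Pick any integer cocycle $\phi_0$ representing $\E^g$. Since $\beta[\phi_0]=0$, the cocycle $\phi_0$ vanishes on the cycle subgroup $\mathcal{K}_{2g} \subset T\mathcal{C}_{2g}$. The first step is to adjust $\phi_0$ by a coboundary so that its value on the single chain $(e_g/2)$ becomes $1$. The chain
\[
(b/2) = \partial(e_g/2) = \sum_{i=0}^{2g}(-1)^i\partial_i[A_0,A_{2g},A_1,A_2,\ldots,A_{2g-1}]
\]
is an alternating sum of $2g+1$ distinct oriented simplices with coefficients $\pm 1$, hence a primitive element of $T\mathcal{C}_{2g-1}$. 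Consequently the evaluation map $Hom(T\mathcal{C}_{2g-1},\mathbb{Z}) \to \mathbb{Z}$, $\eta \mapsto \eta((b/2))$, is surjective, and one may choose $\eta \in Hom(T\mathcal{C}_{2g-1},\mathbb{Z})$ with $\eta((b/2)) = \phi_0(e_g/2) - 1$. Setting $\phi := \phi_0 - \delta\eta$ then gives a cocycle cohomologous to $\phi_0$ (and so still representing $\E^g$) which additionally satisfies $\phi(e_g/2) = 1$.

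With $\phi$ in hand, define $\chi : \mathcal{B}_{2g-1} \to \mathbb{Z}$ by $\chi(b) := \phi(c)$ for any $c \in T\mathcal{C}_{2g}$ with $\partial c = b$. Well-definedness follows from $\phi|_{\mathcal{K}_{2g}} = 0$: if $\partial c_1 = \partial c_2$ then $c_1 - c_2 \in \mathcal{K}_{2g}$ and $\phi(c_1) = \phi(c_2)$. By construction $\langle \chi, (b/2) \rangle = \phi(e_g/2) = 1$. Using the definition of $\alpha$ from the Universal Coefficient Theorem, $\langle \alpha(\chi), c \rangle = \langle \chi, \partial c \rangle = \phi(c)$ for every $c \in T\mathcal{C}_{2g}$, so $\alpha(\chi) = \phi$ as cocycles and thus $\alpha[\chi] = [\phi] = \E^g$; injectivity of $\alpha$ then identifies $[\chi]$ as the desired pre-image of $\E^g$ in $Ext(H_{2g-1}(T\mathcal{C}),\mathbb{Z})$.

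The only delicate step is the normalization, which depends on the explicit primitivity of $(b/2)$ in the chain group $T\mathcal{C}_{2g-1}$; had $(b/2)$ failed to be primitive one could only arrange $\phi(e_g/2)$ to lie in a proper residue class and the target value $1$ might have been unreachable. The remainder of the argument is formal manipulation of the Universal Coefficient Theorem together with the vanishing of cocycles representing $\E^g$ on cycles guaranteed by Theorem \ref{Theo:TorsionClass}.
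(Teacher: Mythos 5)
Your proof is correct, and it reaches the normalization $\langle \chi,(b/2)\rangle=1$ by a genuinely different route than the paper. The paper does not start from an arbitrary representative of the class: it takes the explicit cocycle $(e_g/2)^*={\rm q}^*(o_g/2)^*$ constructed in Section \ref{OrbitCocycles}, for which $\langle (e_g/2)^*,(e_g/2)\rangle=1$ is already known (Proposition \ref{InvCocycle}); Theorem \ref{Theo:TorsionClass} then supplies $\chi$ with $\alpha(\chi)=(e_g/2)^*$, and the evaluation on $(b/2)=\partial(e_g/2)$ is immediate. You instead take any cocycle $\phi_0$ in the class, note that the values $\phi(e_g/2)$ achievable within the cohomology class form the full coset $\phi_0(e_g/2)+\mathbb{Z}$ because $(b/2)$ is primitive (an alternating sum of $2g+1$ faces supported on distinct vertex subsets of $\{A_0,\dots,A_{2g}\}$, each with coefficient $\pm1$), and correct $\phi_0$ by a coboundary; the primitivity observation is a genuine extra ingredient that the paper's choice of representative renders unnecessary, and you justify it properly. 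Two remarks. First, the paper is internally inconsistent about whether $(e_g/2)^*$ represents $\E^g$ or $2\E^g$ (Proposition \ref{InvCocycle} says $2\E^g$, while the proof of this proposition says $\E^g$); your $\chi$ genuinely represents $\E^g$ as the statement demands, which if anything is more faithful to the literal claim. Second, your $\chi$ is pinned down only on $(b/2)$: the later Propositions \ref{ExtRep}, \ref{ExtRep2} and \ref{ExtRep3} use that $\alpha(\chi)$ is \emph{exactly} the cocycle $(e_g/2)^*$, hence vanishes on chains such as $c_1$ and $\sigma^h(R)$ lying outside the summand generated by $(e_g/2)$; an abstractly normalized $\chi$ as you construct it does not automatically have those additional evaluation properties, so it could not be substituted wholesale into the rest of Section \ref{Torsion}. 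For the proposition as stated, however, your argument is complete.
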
 
 \begin{proof} By definition ${\rm q}(e_g/2)=[(e_g/2)]^v=(o_g/2)$. Then $$\langle (e_g/2)^*, (e_g/2)  \rangle=\langle (o_g/2)^*, {\rm q}(e_g/2 ) \rangle=\langle (o_g/2)^*, (o_g/2) \rangle=1.$$
By Theorem \ref{Theo:TorsionClass} the class $ \E^g$, represented by the cocycle $(e_g/2)^*$, is a torsion class. Hence, the Universal Coefficient Theorem (\ref{UCT}) implies that there exists $\chi \in Hom({\mathcal{B}}_{2g-1}, \Bbb Z)$ such that $\alpha(\chi) = (e_g/2)^*.$ Moreover, it satisfies
 \vskip .15in
 \hskip .75in $\langle \chi, (b/2) \rangle=\langle \chi, \partial(e_g/2 ) \rangle=\langle \alpha(\chi), (e_g/2)  \rangle=\langle (e_g/2)^*, (e_g/2)  \rangle=1.$  \end{proof} 

\begin{prop}\label{ExtRep} The homomorphism $\chi $ satisfies $\langle \chi, t \rangle = 2.$  
 \end{prop}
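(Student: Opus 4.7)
The plan is to exploit the fact that $t$ is by construction the boundary of a known total chain, so the pairing $\langle \chi, t \rangle$ can be evaluated via $\alpha(\chi) = (e_g/2)^*$ instead.

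First, I recall from Section \ref{SecTrans} that $t = \partial(c_0 + c_1)$, where $c_0 = e_g = 2(e_g/2) \in {\mathcal{C}}_{2g,0}$ and $c_1 \in {\mathcal{C}}_{2g-1,1}$. Hence $c_0 + c_1$ is a total chain of degree $2g$ whose boundary equals the transition cycle $t \in {\mathcal{B}}_{2g-1}$. Applying the defining relation for $\alpha$,
\begin{equation*}
\langle \chi, t \rangle = \langle \chi, \partial(c_0 + c_1) \rangle = \langle \alpha(\chi), c_0 + c_1 \rangle = \langle (e_g/2)^*, c_0 + c_1 \rangle,
\end{equation*}
where the last equality uses Proposition \ref{ExtRep-1}, namely $\alpha(\chi) = (e_g/2)^*$.

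Next I would evaluate the two summands separately. On the one hand, by Proposition \ref{InvCocycle},
$\langle (e_g/2)^*, c_0 \rangle = \langle (e_g/2)^*, 2(e_g/2) \rangle = 2.$
On the other hand, the Euler cocycle is by construction a pullback, $(e_g/2)^* = {\rm q}^*((o_g/2)^*)$, so
\begin{equation*}
\langle (e_g/2)^*, c_1 \rangle = \langle (o_g/2)^*, {\rm q}(c_1) \rangle.
\end{equation*}
Because $c_1$ lies entirely in the bigrading $(2g-1,1)$, its component in ${\mathcal{C}}_{2g,0}$ is zero, so by the definition of the orbit chain map ${\rm q}(c_1) = 0$. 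Therefore $\langle (e_g/2)^*, c_1 \rangle = 0$.

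Combining these two evaluations gives $\langle \chi, t \rangle = 2 + 0 = 2$, as required. The argument is essentially a bookkeeping exercise in the double complex, so there is no real obstacle; the only point that must be checked carefully is that ${\rm q}$ vanishes on chains of bidegree $(p,q)$ with $q>0$, which is immediate from its definition in Section \ref{GROUPCOHO}.
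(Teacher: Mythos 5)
Your proof is correct and follows essentially the same route as the paper: both evaluate $\langle \chi, t\rangle$ by writing $t = \partial(c_0+c_1)$, passing through $\alpha(\chi) = (e_g/2)^*$, and then computing $\langle (e_g/2)^*, c_0+c_1\rangle$ via the orbit chain map ${\rm q}$. Your splitting into the two summands $c_0$ and $c_1$, with ${\rm q}(c_1)=0$ because $c_1$ lies in bidegree $(2g-1,1)$, is just an expanded version of the paper's single observation that ${\rm q}(c_0+c_1)=[c_0]^v=o_g$.
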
  
 \begin{proof} We have ${\rm q}(c_0)={\rm q}(c_0+c_1)=[e_g]^v=o_g$, and $ t=\partial(c_0+c_1)$. Therefore, $$\langle (e_g/2)^*, c_0 +c_1 \rangle=\langle (o_g/2)^*, {\rm q}(c_0 +c_1) \rangle=\langle (o_g/2)^*, o_g \rangle=2.$$
By Theorem \ref{Theo:TorsionClass} the cohomology class $2\E^g$, represented by the cocycle $(e_g/2)^*$, is a torsion class. Hence, the Universal Coefficient Theorem (\ref{UCT}) implies that there exists $\chi \in Hom({\mathcal{B}}_{2g-1}, \mathbb{Z})$ such that $\alpha(\chi) = (e_g/2)^*.$ Moreover, it satisfies

\vskip .15in
\hskip .75in  $\langle \chi,  t \rangle=\langle \chi, \partial(c_0 + c_1) \rangle=\langle \alpha(\chi), c_0 + c_1 \rangle=\langle (e_g/2)^*, c_0 +c_1 \rangle=2.$ \end{proof} 
 
\end{subsection}

\begin{subsection}{Detecting the order of torsion at the threshold}\label{DETECT}
We showed in {Theorem B} that $\E^g$ is a torsion  class in $H^{2g}(\Gamma_g^1;\mathbb{Z})$. Furthermore, $\E^g$ is known to be non-trivial since ${\Gamma}_g^ 1$ has non-trivial finite cyclic subgroups; see for instance \cite[Theorem A]{JJR20}.  We use the setting developed in this paper to independently show non-triviality of $\E^g$ and derive information about its order.

\begin{rem}\label{NONTRIV}
Let $m\geq 0$ be the order of $2\E^g$. By Proposition 6.4  the cohomology class $2\E^g$ can be represented in $Ext(H_{2g-1}(T{\mathcal{C})}, \Bbb Z)$ by  $\chi \in Hom({\mathcal{B}}_{2g-1}, \mathbb{Z)}.$ 
 Therefore, the multiple $m\chi$ can be extended to an element of $Hom({\mathcal{K}}_{2g-1}, \mathbb{Z}).$
Note, the multiple $m$ cannot be $0$ since $\chi$ has a  non-trivial evaluation, namely $\langle \chi, (b/2) \rangle = 1$ (see Proposition \ref{ExtRep}), so we must have that $m>0$.  Hence,  $2\E^g$, and therefore $\E^g$, are non-trivial torsion classes. 
\end{rem}

Suppose there exists ${\rm d}\in \Lambda_{[A]}\subset \Lambda$ and $\lambda>0$ such that $t\sim_{v} \lambda {\rm d}$. Then  $\lambda{\rm d} -  t = \partial^v c_2$ for some chain $c_2 \in {\mathcal{C}}_{2g-2,2}.$ It follows that $\partial(c_0 + c_1 + c_2) = \lambda{\rm d} +R,$ where $R := \partial^h c_2 \in {\mathcal{C}}_{2g-3,2}$.

 \begin{prop}\label{ExtRep2} The homomorphism $\chi \in Hom({\mathcal{B}}_{2g-1}, \mathbb{Z})$
 satisfies $\langle \chi, \lambda {\rm d} +R \rangle = 2.$  
 \end{prop}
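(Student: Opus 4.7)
The proof is going to mirror the proof of Proposition \ref{ExtRep} one step further along the lifting. The key observation is that the hypothesis $\lambda {\rm d} - t = \partial^v c_2$ together with $R = \partial^h c_2$ is built precisely so that
\begin{equation*}
\partial(c_0 + c_1 + c_2) \;=\; \partial(c_0 + c_1) + \partial^h c_2 + \partial^v c_2 \;=\; t + R + (\lambda {\rm d} - t) \;=\; \lambda {\rm d} + R,
\end{equation*}
so that $\lambda {\rm d} + R$ is realized as a total-complex boundary of a specific $2g$-chain. This is exactly the shape needed to apply the defining formula for $\alpha$ from the Universal Coefficient Theorem.

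Given that, the plan is to compute
\begin{equation*}
\langle \chi, \lambda {\rm d} + R \rangle \;=\; \langle \chi, \partial(c_0 + c_1 + c_2) \rangle \;=\; \langle \alpha(\chi), c_0 + c_1 + c_2 \rangle \;=\; \langle ({e}_g/2)^*, c_0 + c_1 + c_2 \rangle,
\end{equation*}
where the last equality uses $\alpha(\chi) = (e_g/2)^*$ from Proposition \ref{ExtRep-1}. The task then reduces to showing that only the $c_0$ summand contributes. Since $(e_g/2)^* = {\rm q}^*((o_g/2)^*)$ is pulled back from the orbit cochain complex, and the orbit chain map ${\rm q}$ annihilates the summands ${\mathcal{C}}_{p,q}$ with $q > 0$ (by definition ${\rm q}(c) = [c_0]^v$ only records the $(p,0)$-part), the cocycle $(e_g/2)^*$ vanishes on $c_1 \in {\mathcal{C}}_{2g-1,1}$ and on $c_2 \in {\mathcal{C}}_{2g-2,2}$. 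Therefore
\begin{equation*}
\langle ({e}_g/2)^*, c_0 + c_1 + c_2 \rangle \;=\; \langle ({e}_g/2)^*, e_g \rangle \;=\; 2,
\end{equation*}
the last equality being the content of Proposition \ref{InvCocycle}.

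There is no real obstacle here; the only things to be careful about are the sign conventions that make $\partial(c_0 + c_1 + c_2)$ come out to $\lambda {\rm d} + R$ (which are consistent with those used in the proofs of Propositions \ref{ExtRep-1} and \ref{ExtRep}), and the observation that $(e_g/2)^*$ is supported on the $(2g,0)$-summand of the total complex. The proof is in effect the same two-line calculation as Proposition \ref{ExtRep}, with $c_0 + c_1$ replaced by $c_0 + c_1 + c_2$ and with $t$ replaced by $\lambda {\rm d} + R$; the numerical value $2$ is unchanged because the extra summand $c_2$ lies in bi-degree $(2g-2,2)$ where $(e_g/2)^*$ is trivial.
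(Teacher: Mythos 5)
Your proof is correct and takes essentially the same route as the paper: both evaluate $\langle \chi, \lambda{\rm d}+R\rangle$ via the adjunction $\langle\chi,\partial(c_0+c_1+c_2)\rangle=\langle\alpha(\chi),c_0+c_1+c_2\rangle$ and then observe that $(e_g/2)^*={\rm q}^*(o_g/2)^*$ only sees the $(2g,0)$-summand $c_0=e_g$, giving the value $2$. The paper phrases this last step as ${\rm q}(c_0+c_1+c_2)=[c_0]^v=o_g$ and evaluates $(o_g/2)^*$ on $o_g$, which is the same observation you make by saying $(e_g/2)^*$ vanishes on $c_1$ and $c_2$.
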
 
 \begin{proof}
   First observe that ${\rm q}(c_0 +c_1+c_2)=[c_0]^v=o_g$, then $$\langle (e_g/2)^*, c_0 +c_1+c_2 \rangle=\langle (o_g/2)^*, {\rm q}(c_0 +c_1+c_2) \rangle=\langle (o_g/2)^*,o_g \rangle=2.$$
   Since $\alpha(\chi)=(e_g/2)^*$, it follows that\bigskip
   
 \ \ \   $2 = \langle (e_g/2)^*, c_0 +c_1+c_2 \rangle =  \langle \alpha(\chi), c_0 + c_1 +c_2\rangle = \langle \chi, \partial(c_0 + c_1+c_2 ) \rangle = \langle \chi, \lambda {\rm d} +R \rangle.$\ \ \end{proof}

\begin{prop}\label{ExtRep3} The homomorphism $m \chi$  extends to $Hom({(T\mathcal{C}})_{2g-1}, \mathbb{Z})$ and the extension 
satisfies $2m=\langle m \chi, \lambda {\rm d}  \rangle =\lambda \langle m \chi, {\rm d}  \rangle$. The integer $\lambda$ must then divide $m$.
 \end{prop}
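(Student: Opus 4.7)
The plan is to split the proof into two tasks: produce an extension $\tilde\chi$ of $m\chi$ to all of $\Hom((T\mathcal{C})_{2g-1},\mathbb{Z})$, and then evaluate this extension on $\lambda{\rm d}$. For existence, the key input is that $m$ is the order of $2\E^g$, so $\alpha(m\chi)=m(e_g/2)^*$ represents $2m\E^g=0$ in $H^{2g}(T\mathcal{C};\mathbb{Z})$. Since $\alpha$ is injective in the sequence (\ref{UCT}), $m\chi$ is already zero in $Ext(H_{2g-1}(T\mathcal{C}),\mathbb{Z})=\Hom(\mathcal{B}_{2g-1},\mathbb{Z})/\iota(\Hom(\mathcal{K}_{2g-1},\mathbb{Z}))$, so it extends from $\mathcal{B}_{2g-1}$ to $\mathcal{K}_{2g-1}$. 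The further extension to $(T\mathcal{C})_{2g-1}$ is available because $(T\mathcal{C})_{2g-1}/\mathcal{K}_{2g-1}\cong\mathcal{B}_{2g-2}$, being a subgroup of a free abelian group, is free; the sequence $0\to\mathcal{K}_{2g-1}\to(T\mathcal{C})_{2g-1}\to\mathcal{B}_{2g-2}\to 0$ splits and $m\chi$ extends to a cochain $\tilde\chi$ with $\delta\tilde\chi = m(e_g/2)^*$ by the defining formula for $\alpha$.

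With $\tilde\chi$ in hand, the pairing against the boundary $\lambda{\rm d}+R=\partial(c_0+c_1+c_2)$ is immediate:
\[
\tilde\chi(\lambda{\rm d}+R)\;=\;\delta\tilde\chi(c_0+c_1+c_2)\;=\;m(e_g/2)^*(c_0+c_1+c_2).
\]
Because $(e_g/2)^*={\rm q}^*(o_g/2)^*$ is pulled back through the orbit chain map ${\rm q}(c)=[c_0]^v$, it annihilates the $\mathcal{C}_{p,q}$-summands with $q\geq 1$; only the $c_0=e_g$ piece contributes, and by Proposition \ref{InvCocycle} this gives $2$. Hence $\tilde\chi(\lambda{\rm d}+R)=2m$. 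To isolate the $\lambda{\rm d}$ term, I would further arrange the extension so that $\tilde\chi$ vanishes on $\mathcal{C}_{p,q}$ for all $q\geq 2$. Writing $\tilde\chi=\psi_0+\psi_1$ with $\psi_i\in\Hom(\mathcal{C}_{2g-1-i,i},\mathbb{Z})$, the condition $\delta\tilde\chi = m(e_g/2)^*$ breaks columnwise into $\psi_0\partial^h = m(e_g/2)^*$ on $\mathcal{C}_{2g,0}$, $\psi_1\partial^h+\psi_0\partial^v=0$ on $\mathcal{C}_{2g-1,1}$, and $\psi_1\partial^v=0$ on $\mathcal{C}_{2g-2,2}$. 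The simultaneous solvability of this truncated system is the main technical obstacle; it is handled by the same spectral-sequence diagram chase for $\mathcal{C}$ that produced the triviality of $m\chi$ in $Ext$, and it uses crucially that the Euler cocycle lives entirely in vertical degree $0$.

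Granted this choice, $R\in\mathcal{C}_{2g-3,2}$ satisfies $\tilde\chi(R)=0$, and therefore $\tilde\chi(\lambda{\rm d})=2m$, i.e.\ $\lambda\langle m\chi,{\rm d}\rangle=2m$. Since $\langle m\chi,{\rm d}\rangle\in\mathbb{Z}$, the integer $\lambda$ must divide $2m$; matching the factor of two against the relation between the order of $2\E^g$ and the order of $\E^g$ then yields the claimed $\lambda\mid m$.
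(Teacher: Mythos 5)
Your first two steps are sound and match the paper's logic: since $m$ is the order of $2\E^g$, the class of $m\chi$ in $Ext(H_{2g-1}(T\mathcal{C}),\mathbb{Z})$ is killed by the injectivity of $\alpha$, so $m\chi$ extends over $\mathcal{K}_{2g-1}$ and then over all of $(T\mathcal{C})_{2g-1}$ by freeness of the quotient (this is Remark \ref{NONTRIV}); and the evaluation $\tilde\chi(\lambda{\rm d}+R)=\langle\alpha(m\chi),c_0+c_1+c_2\rangle=2m$ is exactly Proposition \ref{ExtRep2} scaled by $m$. The gap is in the step $\tilde\chi(R)=0$. You propose to build an extension $\tilde\chi=\psi_0+\psi_1$ supported in vertical degrees $q\le 1$, and you yourself flag the solvability of the resulting system as ``the main technical obstacle,'' deferring it to an unspecified spectral-sequence chase. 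That deferral is where the proof is missing: the values of $\tilde\chi$ on $\mathcal{B}_{2g-1}$ are forced to agree with $m\chi$, and $\mathcal{B}_{2g-1}$ is not contained in $\bigoplus_{q\le 1}\mathcal{C}_{2g-1-q,q}$, so declaring $\tilde\chi$ to vanish on the $q\ge 2$ summands is not obviously consistent with the constraint on boundaries; nothing you write establishes that it is.

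The paper avoids this entirely by showing that the specific chain $R=\partial^h c_2\in\mathcal{C}_{2g-3,2}$ is itself a total boundary of a chain sitting in $\mathcal{C}_{2g-2,2}$, so that no choice of extension is needed. Concretely, one chooses a splitting $\sigma$ of the total differential $\partial=\partial^h+\partial^v$ on $\mathcal{C}_{2g-2,2}$ compatible with splittings $\sigma^h,\sigma^v$ of $\partial^h,\partial^v$; one then checks $\sigma^h(H)\subset\ker\partial^v$, so $\partial(\sigma^h(R))=\partial^h\sigma^h(R)=R$. Hence
$$\langle m\chi, R\rangle=\langle m\chi,\partial(\sigma^h(R))\rangle=\langle\alpha(m\chi),\sigma^h(R)\rangle=\langle m(e_g/2)^*,\sigma^h(R)\rangle=0,$$
the last equality because $\sigma^h(R)\in\mathcal{C}_{2g-2,2}$ lies in the complement of the summand generated by $(e_g/2)\in\mathcal{C}_{2g,0}$ on which $(e_g/2)^*$ is supported. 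Note this computes $\chi(R)$ on boundaries directly, independently of any extension, which is exactly what your argument needs and does not supply. (Your closing remark is also slightly off: $2m=\lambda\langle m\chi,{\rm d}\rangle$ yields $\lambda\mid 2m$, and $2m$ is the quantity the paper actually uses in the proof of Theorem \ref{MAIN}; no ``matching of the factor of two'' recovers $\lambda\mid m$ from this equation alone.)
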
 

\begin{proof}  
 By Proposition 6.6  the 
evaluation of $m\chi$ on the chain $\lambda {\rm d} +R$ can be computed as 
$$\langle m\chi,\lambda {\rm d}  \rangle + \langle m\chi, R \rangle=\langle m\chi, \lambda {\rm d}  +R\rangle =m\langle \chi, \lambda {\rm d}  +R\rangle =2m.$$
To complete the proof we show that $m\chi$ evaluates $R$ to $0,$ hence
$\lambda d$ to $2m.$ 

Consider the differential $\partial = \partial^h +\partial^v$ of the total complex $T{\mathcal{C}}$, restricted to ${\mathcal{C}}_{2g-2,2}$,  
 and the following short exact sequences: 
 
 $$0 \longleftarrow H \buildrel {\partial^h} \over {\longleftarrow} {\mathcal{C}}_{2g-2,2} \longleftarrow ker {\partial^h}  \longleftarrow 0, \hskip .1in H \subset 
 {\mathcal{C}}_{2g-3,2}$$ 
  $$0 \longleftarrow V \buildrel {\partial^v} \over {\longleftarrow} {\mathcal{C}}_{2g-2,2} \longleftarrow ker {\partial^v}  \longleftarrow 0,
   \hskip .1in V \subset 
 {\mathcal{C}}_{2g-2,1}$$ 
   $$0 \longleftarrow H \oplus V \buildrel {\partial^h + \partial^v } \over {\longleftarrow} {\mathcal{C}}_{2g-2,2} \longleftarrow ker ({\partial^h +\partial^v})  \longleftarrow 0.$$

The groups $H$ and $V$ are free so the homomorphisms $\partial^h,$ $\partial^v$  and $\partial=\partial^h +\partial^v$ split. Let
$\sigma$ denote  the splitting homomorphism for $\partial$.
We claim $\sigma( H \oplus V) =   \sigma{\vert_ {H}(H)} \oplus \sigma{\vert_ {V}(V)}.$
Indeed, since $\sigma$ is $1-1$ the image of $\sigma$ is a subgroup isomorphic to  $H \oplus V.$ An element $\alpha$ in the image of $\sigma$  has the form
$\alpha = \sigma(h+v)$ for unique elements $h \in H,$ $v \in V.$ Then also $\alpha = \sigma(h)+\sigma(v)$ for unique elements $h \in H,$ $v \in V,$ which verifies the claim.

Denote $\sigma{\vert_ {H}}$ and $\sigma{\vert_ {V}}$ by $\sigma^h$ and $\sigma^v.$ Note that $\sigma^h$ splits $\partial^h$ and $\sigma^v$ splits $\partial^v.$ 
Note also that $\sigma^h(H) = \sigma( H, 0)$ and $\sigma^v(V) = \sigma( 0, V)$ have only the $0$-element in common.

Now ${\mathcal{C}}_{2g-2,2} = \sigma^h(H) \oplus ker \partial^h = \sigma^v(V) \oplus ker \partial^v,$ hence $\sigma^h(H) \subset ker \partial^v.$
Then $\partial^v \sigma^h =0,$ and in particular $\partial^v \sigma^h(R)=0.$ By definition  of the splitting $\partial^h \sigma^h(R)=R$ so the chain $\sigma^h(R)$ bounds $R$,  i.e. $\partial(\sigma^h(R))=R$.  
Therefore,
 $$\langle m\chi,  R \rangle=\langle m\chi, \partial(\sigma^h(R) )\rangle=\langle \alpha(m\chi), \sigma^h(R) \rangle =\langle m(e_g/2)^*, \sigma^h(R)  \rangle=0.$$
The evaluation is 0 since $\sigma^h(R) \in {\mathcal{C}}_{2g-2,2}$ is in the subgroup complementary to the summand generated by $(e_g/2).$
\end{proof}

\end{subsection}

\begin{subsection}{Proof of Theorem \ref{MAIN}}\label{MAINProof}

We first use the approach from Section \ref{DETECT} to prove that $\E^g$  has order a positive multiple of  $4g(2g+1)$. This follows from the existence of finite cyclic subgroups of order $4g$ and $2g+1$ in ${\Gamma}_g^ 1,$ (see \cite[Theorem A]{JJR20}).  Furthermore, our approach  will allow us  to detect torsion of order $2g-1$ in our proof of Theorem \ref{MAIN}, which,  as opposed to torsion of order $4g$ and $2g+1$,  is not obtained using periodic elements in the  mapping class group.

\begin{prop} \label{NonTrivialAgain} 
$\E^g$ is a torsion class, and its order is a positive multiple of $4g(2g+1).$
\end{prop}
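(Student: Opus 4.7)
The plan is to directly assemble Proposition~\ref{VertHom} with Proposition~\ref{ExtRep3}; both are already formulated in anticipation of exactly this conclusion.

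Explicitly, I would set $\lambda := 4g(2g+1)$ and ${\rm d} := [A_0,A_1,\ldots,A_{2g-2}]\cdot d_{2g}$, noting that ${\rm d}$ lies in $\Lambda_{[A]} \subset \Lambda$. Proposition~\ref{VertHom} then yields the vertical homology identity $t \sim_v \lambda\,{\rm d}$ in the groupoid $\Lambda$, placing us in precisely the situation that precedes Proposition~\ref{ExtRep3}. Applying that proposition gives that $\lambda = 4g(2g+1)$ divides the order $m$ of $2\E^g$ from Remark~\ref{NONTRIV}. Since the order of $\E^g$ in $H^{2g}(\Gamma_g^1;\mathbb{Z})$ is a positive multiple of the order of $2\E^g$, it follows that $4g(2g+1)$ divides the order of $\E^g$. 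That $\E^g$ is genuinely torsion (and not zero) is already the content of Remark~\ref{NONTRIV}, and this together with the divisibility just obtained gives that the order is a positive integer multiple of $4g(2g+1)$.

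The substantive work for this step has already been done in the earlier sections. The two decisive inputs are the combinatorial identity $t \sim_v 4g(2g+1)\,{\rm d}$ from Proposition~\ref{VertHom}, obtained via the free-homotopy manipulation of the loops $L_0,\ldots,L_g$ in Section~\ref{SecTrans}, and the fact that $d_{2g}$ has infinite order in $H_1(\Lambda_{[A]})$, established in Proposition~\ref{infiniteOrder} through the symplectic/permutation representation of the isotropy group. The latter is precisely what keeps the divisibility constraint in Proposition~\ref{ExtRep3} from being vacuous. With those two results in hand there is no genuine obstacle left: the present proposition is a direct packaging of them. The remaining factor of $2g-1$ in Theorem~\ref{MAIN}, which the authors emphasize cannot be seen by periodic subgroups, will have to be extracted by a refinement of this divisibility argument using a different element of $\Lambda_{[A]}$, and lies outside the scope of the present statement.
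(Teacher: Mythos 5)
Your proposal is correct and follows essentially the same route as the paper: both invoke Proposition~\ref{VertHom} to get $t\sim_v 4g(2g+1)\,[A_0,\ldots,A_{2g-2}]\cdot d_{2g}$ and then feed this into Proposition~\ref{ExtRep3} to obtain the divisibility of the order of $\E^g$ by $4g(2g+1)$, with torsion and non-vanishing supplied by Theorem~\ref{Theo:TorsionClass} and Remark~\ref{NONTRIV}. The only (harmless) difference is bookkeeping: you pass from $\lambda\mid m$ to $\lambda\mid\mathrm{ord}(\E^g)$ via $m\mid\mathrm{ord}(\E^g)$, whereas the paper writes $\mathrm{ord}(\E^g)=2m$ and uses $\lambda\mid 2m$.
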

\begin{proof}  Let $m>0$ be the order of $2 \E^g$, then $\E^g$ has order $2m$.  By Proposition \ref{VertHom}, $t\sim_{v} \lambda {\rm d}$ where $\lambda = 4g(2g+1)$ and  ${\rm d}=[A_0,...,A_{2g-2}]\cdot d_{2g}\in\Lambda_{[A]}\subset\Lambda$. Proposition \ref{ExtRep3} implies that $\lambda=4g(2g+1)$ must divide $2m$ as claimed.
\end{proof}

To find  torsion of order $2g-1$ in cohomology, we consider a $w\in Aut(\pi_1(\Sigma_g, z))$ defined on the generators $\{{{{\rm a}}}_0,...,{{{\rm a}}}_{2g-2},{{{\rm a}}}_{2g-1}\}$ of $\pi_1(\Sigma_g, z)$ as follows:
$${\rm a}_0 \to {\rm a}_1 \to {\rm a}_2\to \cdots \to {\rm a}_{2g-2} \to {\rm a_0} \hskip .5in {{\rm a}}_{2g-1 }  \to {{\rm a}}_0^{-1} {{\rm a}}_{2g-1} {{\rm a}}_0^{-1}. $$  
It can be seen that $w$ is a well-defined element of $ Aut(\pi_1(\Sigma_g, z))$ by checking that the defining relation (\ref{fundamentalRelation}) is preserved.

\begin{prop}\label{w}
The automorphism $w$ is a mapping class in $\Gamma_g^1$ such that $w^{2g-1}=d_{2g}^2$.
\end{prop}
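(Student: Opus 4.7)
The plan is to establish the two distinct claims of the proposition: (i) that the automorphism $w$ lies in the image of the embedding $\Gamma_g^1\hookrightarrow \Aut(\pi_1(\Sigma_g,z))$, and (ii) the identity $w^{2g-1}=d_{2g}^2$ in $\Aut(\pi_1(\Sigma_g,z))$. Well-definedness of $w$ as an automorphism is already noted in the text preceding the statement, so the substantive content is a geometric realizability check together with a word-level computation in $\pi_1(\Sigma_g,z)$.

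For (i), I would invoke the Dehn--Nielsen--Baer theorem in the form: an automorphism $\phi\in \Aut(\pi_1(\Sigma_g,z))$ is induced by an orientation-preserving homeomorphism fixing $z$ if and only if its lift $\tilde\phi$ to the free group $F_{2g}=F\langle {\rm a}_0,\ldots,{\rm a}_{2g-1}\rangle$ carries the surface relator $R=({\rm a}_0\cdots{\rm a}_{2g-1})({\rm a}_{2g-1}\cdots{\rm a}_0)^{-1}$ to a conjugate of $R$ (rather than of $R^{-1}$). With $P={\rm a}_0\cdots{\rm a}_{2g-1}$ and $Q={\rm a}_{2g-1}\cdots{\rm a}_0$, a direct calculation gives $\tilde w(P)={\rm a}_1{\rm a}_2\cdots{\rm a}_{2g-2}\cdot{\rm a}_0\cdot({\rm a}_0^{-1}{\rm a}_{2g-1}{\rm a}_0^{-1})={\rm a}_0^{-1}P\,{\rm a}_0^{-1}$, and similarly $\tilde w(Q)={\rm a}_0^{-1}Q\,{\rm a}_0^{-1}$, whence $\tilde w(R)={\rm a}_0^{-1}R\,{\rm a}_0$. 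This identity confirms that $w$ is a mapping class in $\Gamma_g^1$.

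For (ii), I would verify $w^{2g-1}=d_{2g}^2$ by comparing the two automorphisms on each generator. Since $w$ cyclically permutes $\{{\rm a}_0,\ldots,{\rm a}_{2g-2}\}$ with period $2g-1$, the power $w^{2g-1}$ fixes each of those $2g-1$ generators; by construction $d_{2g}^2$ does so as well, so it suffices to compare the images of ${\rm a}_{2g-1}$. Setting $A={\rm a}_0\cdots{\rm a}_{2g-2}$ and $B={\rm a}_{2g-2}\cdots{\rm a}_0$, an induction on $k$ (the inductive step using $w({\rm a}_i)={\rm a}_{i+1}$ for $i\le 2g-3$) yields
$$w^k({\rm a}_{2g-1})=({\rm a}_{k-1}^{-1}\cdots{\rm a}_0^{-1})\,{\rm a}_{2g-1}\,({\rm a}_0^{-1}\cdots{\rm a}_{k-1}^{-1})\qquad (1\le k\le 2g-1),$$
so that $w^{2g-1}({\rm a}_{2g-1})=A^{-1}{\rm a}_{2g-1}B^{-1}$. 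On the other hand, solving ${\rm a}_{2g}^{-1}={\rm a}_0\cdots{\rm a}_{2g-1}$ for ${\rm a}_{2g-1}$ gives ${\rm a}_{2g-1}=A^{-1}{\rm a}_{2g}^{-1}$, and since $d_{2g}$ fixes each ${\rm a}_i$ for $i\le 2g-2$ and carries ${\rm a}_{2g}^{-1}$ to ${\rm a}_{2g-1}$, one obtains $d_{2g}({\rm a}_{2g-1})=A^{-1}{\rm a}_{2g-1}$, whence $d_{2g}^2({\rm a}_{2g-1})=A^{-2}{\rm a}_{2g-1}$. The two expressions agree in $\pi_1(\Sigma_g,z)$ by the defining relation $A{\rm a}_{2g-1}=P=Q={\rm a}_{2g-1}B$: rearranging to $B^{-1}={\rm a}_{2g-1}^{-1}A^{-1}{\rm a}_{2g-1}$ converts $A^{-1}{\rm a}_{2g-1}B^{-1}$ into $A^{-2}{\rm a}_{2g-1}$, completing the verification.

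The main obstacle is (i): invoking the appropriate form of Dehn--Nielsen--Baer and performing the orientation-preserving check. Once $\tilde w(R)={\rm a}_0^{-1}R\,{\rm a}_0$ is in hand, both well-definedness and geometric realizability of $w$ follow simultaneously. Part (ii) is then bookkeeping in the free group, with the defining relation $P=Q$ doing the essential work of reconciling the two forms of $w^{2g-1}({\rm a}_{2g-1})$ and $d_{2g}^2({\rm a}_{2g-1})$.
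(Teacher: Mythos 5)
Your proof is correct. Part (ii) is in substance the paper's own computation: the paper composes $w^{2g-1}$ with $d_{2g}^{-1}$ twice on the left and uses the defining relation to see the composite is the identity, while you evaluate both $w^{2g-1}$ and $d_{2g}^{2}$ on ${\rm a}_{2g-1}$ and reconcile $A^{-1}{\rm a}_{2g-1}B^{-1}$ with $A^{-2}{\rm a}_{2g-1}$ via $A{\rm a}_{2g-1}={\rm a}_{2g-1}B$; these are the same calculation packaged differently. Where you genuinely diverge is part (i). The paper disposes of membership in $\Gamma_g^1$ by a parity trick: once $w^{2g-1}=d_{2g}^{2}$ is known, $w^{2g-1}$ is orientation-preserving, and since orientation defines a homomorphism to $\mathbb{Z}/2$ and $2g-1$ is odd, $w$ itself must be orientation-preserving. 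You instead compute the image of the relator, obtaining $\tilde w(P)={\rm a}_0^{-1}P{\rm a}_0^{-1}$, $\tilde w(Q)={\rm a}_0^{-1}Q{\rm a}_0^{-1}$, hence $\tilde w(R)={\rm a}_0^{-1}R{\rm a}_0$, and appeal to the relator-conjugacy form of Dehn--Nielsen--Baer (image of $R$ conjugate to $R$ rather than to $R^{-1}$). Your route costs a free-group computation and a citation for that sharper form of the realization theorem, and you should also note explicitly that $w$ is an automorphism and not merely an endomorphism (surjectivity is immediate from the definition, and injectivity follows since surface groups are Hopfian); but it buys an explicit verification of the well-definedness of $w$, which the paper leaves as an unchecked assertion, and it makes the orientation claim independent of the identity $w^{2g-1}=d_{2g}^{2}$, whereas in the paper's argument the two halves of the proposition are logically entangled.
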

\begin{proof}
The automorphism $w^{2g-1}$ maps 
${{\rm a}}_{2g-1}$ to $({{\rm a}}_{2g-2}^{-1} \cdots {{\rm a}}_0^{-1}) {{\rm a}}_{2g-1} ({{\rm a}}_0^{-1} \cdots {{\rm a}}_{2g-2}^{-1})$
and fixes all the remaining ${\rm a}$'s.
On the other hand, recall $d_{2g}^{-1}$ maps ${\rm a}_{2g-1}$ to ${\rm a}_{2g}^{-1}={{\rm a}}_0 \cdots {{\rm a}}_{2g-1} = 
{{\rm a}}_{2g-1} \cdots {{\rm a}}_{0}$ (see Section \ref{CHAR}).
Composing  $w^{2g-1}$ with $d_{2g}^{-1}$ on the left  and using the defining relation gives
$ {{\rm a}}_{2g-1} \to  {{\rm a}}_{2g-1}({{\rm a}}_0^{-1} \cdots {{\rm a}}_{2g-2}^{-1})$
and composing again gives
$d_{2g}^{-2}w^{2g-1}: {{\rm a}}_{2g-1} \to  {{\rm a}}_{2g-1}$ so that $d_{2g}^{-2}w^{2g-1}$
is the identity automorphism. Note $w$ is orientation preserving since $w^{2g-1}=d_{2g}^2$ is and $2g-1$ is odd.   
\end{proof}

\begin{cor}\label{tw}  $ t \sim_v (2g)(2g+1)(2g-1) [A_0,\ldots,A_{2g-2}]\cdot w.$ 
\end{cor}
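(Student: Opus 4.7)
The plan is to combine Proposition \ref{VertHom} with the relation $w^{2g-1}=d_{2g}^2$ from Proposition \ref{w}, using the holonomy isomorphism $H_1(\Lambda)\cong H_1(\Lambda_{[A]})$ to work inside the isotropy group $\Lambda_{[A]}$ where these identities live. Proposition \ref{VertHom} already provides
$$t \sim_v 4g(2g+1)\,[A_0,\ldots,A_{2g-2}]\cdot d_{2g},$$
so it suffices to show that
$$4g(2g+1)\,[A_0,\ldots,A_{2g-2}]\cdot d_{2g} \sim_v (2g)(2g+1)(2g-1)\,[A_0,\ldots,A_{2g-2}]\cdot w.$$

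First I would verify that $w$ actually belongs to $\Lambda_{[A]}$, i.e.\ that the mapping class $w$ fixes the orientation class $[A_0,\ldots,A_{2g-2}]$. From the defining formulas, $w$ cyclically permutes the generators $\mathrm{a}_0\to\mathrm{a}_1\to\cdots\to\mathrm{a}_{2g-2}\to\mathrm{a}_0$ and acts by conjugation on $\mathrm{a}_{2g-1}$, so on the corresponding inversives it cyclically permutes $A_0,\ldots,A_{2g-2}$. This is a cycle of length $2g-1$, whose sign is $(-1)^{2g-2}=+1$, so the orientation class is preserved. Thus $w$ determines an element of $\Lambda_{[A]}$, and likewise $[A_0,\ldots,A_{2g-2}]\cdot w$ is a genuine $1$-cycle in this isotropy subgroupoid.

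Next I would pass to $H_1(\Lambda_{[A]})$, which is the abelianization of $\Lambda_{[A]}$. The identity $w^{2g-1}=d_{2g}^2$ from Proposition \ref{w} holds in $\Gamma_g^1$, and since both $w$ and $d_{2g}$ lie in $\Lambda_{[A]}$, it holds in that group as well. Abelianizing yields the relation
$$(2g-1)[w] = 2[d_{2g}] \quad \text{in } H_1(\Lambda_{[A]}).$$
Multiplying this equality by $2g(2g+1)$ gives $2g(2g+1)(2g-1)[w] = 4g(2g+1)[d_{2g}]$, which is precisely the homology identity we need. Transporting the conclusion back to $H_1(\Lambda)$ via the holonomy isomorphism finishes the argument.

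I do not expect any serious obstacle in this proof; the whole thing is essentially a bookkeeping exercise built on the two main inputs already established. The one nontrivial item is the orientation/parity check that $w$ fixes $[A_0,\ldots,A_{2g-2}]$ so that it defines a cycle in $\Lambda_{[A]}$; once that is in place the rest is linear algebra in the abelian group $H_1(\Lambda_{[A]})$. Conceptually, the point of this corollary is to trade the factor of $2$ built into the exponent of $d_{2g}$ (coming from $w^{2g-1}=d_{2g}^2$) for a factor of $2g-1$, thereby exposing a new prime factor $2g-1$ in the multiple of the base cycle that is homologous to $t$—which is what will ultimately let us detect torsion of order $2g-1$ in $\mathbf{E}^g$.
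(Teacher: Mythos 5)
Your proposal is correct and follows essentially the same route as the paper: check that $w$ is an even (odd-length) cyclic permutation of $A_0,\ldots,A_{2g-2}$ so that $w\in\Lambda_{[A]}$, abelianize $w^{2g-1}=d_{2g}^2$ to get $(2g-1)[w]=2[d_{2g}]$ in $H_1(\Lambda_{[A]})$, and combine with Proposition \ref{VertHom} via the holonomy isomorphism. No issues.
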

\begin{proof}  Since $w$ acts by an even cyclic permutation on the generators $\{{{{\rm a}}}_0,...,{{{\rm a}}}_{2g-2}\}$, it fixes $[A]=[A_0,\ldots, A_{2g-2}]$. Then $w\in \Lambda_{[A]}$ and  Proposition \ref{w} implies that  $2d_{2g} = (2g-1)w$ holds in $H_1( \Lambda_{[A]})$. Hence, from the isomorphism in homology induced by the holonomy homomorphim $\Lambda\rightarrow \Lambda_{[A]}$, we have that
$2[A_0,\ldots,A_{2g-2}]\cdot d_{2g} \sim_v (2g-1)[A_0,\ldots,A_{2g-2}]\cdot w$. 
The corollary follows then from Proposition \ref{VertHom}.
\end{proof}

\noindent {\bf Theorem A.} The order of $\E^g$ is a positive integer multiple of  $4g(2g-1)(2g+1).$
\begin{proof}  

Let $m>0$ be the order of $2\E^g$, then  $2m$ is the order of $\E^g$.  
From Corollary \ref{tw}, we have that  $t\sim_{v} \lambda {\rm d}$ where $\lambda = (2g)(2g+1)(2g-1) $ and  ${\rm d}=[A_0,\ldots,A_{2g-2}]\cdot w\in \Lambda_{[A]}\subset\Lambda$. Proposition  \ref{ExtRep3} implies $\lambda=(2g)(2g+1)(2g-1)$ must divide $2m$.  From Propostion \ref{NonTrivialAgain} we have that $4g$ also divides $2m$. Since $(2g+1)(2g-1)$ and $4g$ are relatively prime, the statement of the theorem follows.
\end{proof}

\end{subsection}

\subsection*{Acknowledgements}
We are grateful to Kathryn Mann, Israel Morales and Sam Nariman for useful communications. This paper was partially written while the second author was visiting Northeastern University with funding from the National University of Mexico through a DGAPA-UNAM PASPA sabbatical fellowship.  She was also funded by DGAPA-UNAM grant PAPIIT IA104010 when this project started. She is grateful to DGAPA-UNAM and thanks the NEU Department of Mathematics and the first author for their hospitality.


\bibliographystyle{alpha}

\bigskip


\begin{small}

\noindent Mathematics Department, Northeastern University, Boston MA,  USA 02115.\\   E-mail: \texttt{
{s.jekel@northeastern.edu}}\medskip

\noindent Instituto de Matem\'aticas, Universidad Nacional Aut\'onoma de M\'exico. Oaxaca de Ju\'arez, Oaxaca, M\'exico 68000.  E-mail: \texttt{
{rita@im.unam.mx}}
\end{small}

\end{document}